\newcommand {\emptycomment}[1]{} 
\newcommand{\nc}{\newcommand}
\newcommand{\delete}[1]{}
\nc{\mfootnote}[1]{\footnote{#1}} 
\nc{\todo}[1]{\tred{To do:} #1}
\nc{\tred}[1]{\textcolor{black}{#1}}
\nc{\tblue}[1]{\textcolor{blue}{#1}}
\nc{\tgreen}[1]{\textcolor{green}{#1}}
\nc{\tpurple}[1]{\textcolor{purple}{#1}}
\nc{\btred}[1]{\textcolor{black}{\bf #1}}
\nc{\btblue}[1]{\textcolor{blue}{\bf #1}}
\nc{\btgreen}[1]{\textcolor{green}{\bf #1}}
\nc{\btpurple}[1]{\textcolor{purple}{\bf #1}}
\nc{\na}[1]{\textcolor{blue}{Nacer:#1}}
\newtheorem{defn}{Definition}[section]
\newtheorem{thm}{Theorem}[section]
\newtheorem{prop}{Proposition}[section]
\newtheorem{lem}{Lemma}[section]
\newtheorem{rem}{Remark}[section]
\numberwithin{equation}{section}
\newtheorem{Cases}{\textbf{Case}}
\newtheorem{scass}{\textbf{Subcase}}[Cases]
\font\bb=msbm10
\def\R{\hbox{\bb R}}
\begin{document}
	\title[	Second cohomology group and (quadratic) extensions of 	
	(metric) Hom-Jacobi-Jordan algebras]{	Second cohomology group and  (quadratic) extensions of 	
	(metric)	 Hom-Jacobi-Jordan algebras}
	
	\author[N. Saadaoui]{Nejib Saadaoui}
	\address{Institut Supérieur d'Informatique et de Multimédia de  Gabès,\\ \footnotesize
		Campus universitaire-BP 122, 6033 Cité El Amel 4, Gabès, TUNISIE.}
	\email{najibsaadaoui@yahoo.fr}
	
	
	\keywords{}
	
	\maketitle

	
	
	\tableofcontents
	
	\numberwithin{equation}{section}
	
	\tableofcontents
	\numberwithin{equation}{section}
	\allowdisplaybreaks

	\begin{abstract}
The main purpose of this paper is to provide a second group cohomology of a (metric) Hom-Jacobi-Jordan
algebra with coefficients in a given representation. 
We show that this second cohomology group
classifies abelian extensions of a Hom-Jacobi-Jordan algebra by a representation. As an application, we classify the low dimensional multiplicative Hom-Jacobi-Jordan
algebras.


		
		


	\end{abstract}

	\section*{Introduction}
	The concept of a Hom-Lie algebra was studied first, from mathematical viewpoint
	by Hartwig,
	Larsson, and Silvestrov in \cite{HLS}, which is a non-associative algebra satisfying
	the skew symmetry and the $\sigma$-twisted Jacobi identity. When $\sigma= id$, the
	Hom-Lie algebras degenerate to exactly the Lie algebras. In this sense, it
	is natural to seek for possible generalizations of known theories from Lie to
	Hom-Lie algebras: Hom-Lie algebras structures was defined in \cite{Ideal1,structure}.  As well as, derivations, representations
	and cohomology of Hom-Lie algebras was studied in \cite{shengR}. Cohomology and deformations
	of Hom-Lie algebras was studied  in \cite{MakloufSil,AmmarMakloufZey}.
	The Jacobi-Jordan-algebra is introduced by Dietrich Burde and  Alice Fialowski 	in\cite{Dietrich.Alice} which is
	a non-associative algebra satisfying the symmetric condition and the Jacobi identity algebras.  The set of these algebras is a subclass of a class of Jordan-Lie superalgebras introduced
	in \cite{Kamia}. Mokh Lie algebras is another name for these algebras (\cite{Pasha}).
	
	The difference with Lie algebras is thus shown in the symmetrization and therefore
	view to the importance of Hom algebras in several domains it is important to generalize the results determined in Jacobi-Jordan algebras
	to Hom -Jacobi- Jordan algebras.
	Cyrille started this generalization in \cite{Cyrille}, in this paper we are interested to studied the second group of cohomology of these algebras
	and its relation with extensions.
	The paper is organized as follows. In the first section, we recall the definition of Hom-Jacobi-Jordan algebras and we give 
	the classification of Hom-Jacobi-Jordan algebras of dimension $2$. In section $2$, we search
	the conditions for the direct sum of two vector spaces M and V to be a Hom-Jacobi-Algebra. 
	During, this investigation, we discover the definitions of representation of $2$-cocycle. looking about conditions to have 
	two direct sums $(M,d,\gamma)$ and $(M,d',\gamma)$ of Hom-Jacobi-Algebra are isomorphic, we find the definition of the operator 
	of $1$-cobords as well as the space of $2$-cobords.
	In order to classify these Hom Jacobi algebras of type direct sum, we need to add certain conditions on this ismorphism, 
	which brings us to define the extensions in the following section 3.
	Moreover in section 3, we generalize a known result of Lie case, that is the class of the extensions is in bijection with the second cohomology group.
	In the last section, we are interested in a particular example of extensions, called central extension and we give the central extensions of 2
	dimensional Jacobi-Algebras.			

\section{Hom-Jacobi-Jordan algebras  }
In this section, first we recall some basic facts about Hom-Jacobi-Jordan
algebras. 
\begin{defn}(\cite{Cyrille})
	A Hom-Jacobi-Jordan algebra  is a triple $(J,[\cdot,\cdot] , \alpha)$, where $ J $ is  a vector space eqquiped with a symmetric    bilinear map  $ [\cdot,\cdot]\colon J\times J\rightarrow J$  and a linear map  $ \alpha\colon J\rightarrow J \ $ such that
	\begin{equation}
		\left[  \alpha(x),[y,z]\right] 
		+\left[ \alpha(y),[z,x] \right]  
		+\left[  \alpha(z),[x,y]	\right]  	
		=0\label{jacobie}
	\end{equation}
	for all   $x, y, z$ in $J$, which is called Hom-Jacobi identity.\\
	We recover Jacobi-Jordan algebras when the linear map $ \alpha $ is the the identity map.\\
	A  Hom-Jacobi-Jordan-algebra   is called abelian if the bilinear map $ [\cdot,\cdot] $ vanishes for all elements in $ J $.\\
	A Hom-Jacobi-Jordan algebra is called a multiplicative Hom-Jacobi-Jordan if $ \alpha $ 
	is an algebraic morphism with 
	\begin{equation}
		\alpha\left( [x,y]\right)= \left[ \alpha(x),\alpha(y)\right]  \label{multiplicative}	                        
	\end{equation}	
	for any $ x,\ y\in J $.\\
A Hom-Jacobi-Jordan algebra is called a regular  if $\alpha$ is an algebra
automorphism.\\	

Let $V$ be a vector space. A $k$-linear map  $f\colon\underbrace{J\times J\ldots\times  J}_{k\ \text{times}}\to V,$ is said to be
	symmetric  if:
	$${\displaystyle \;f(x_{\sigma (1)},\cdots ,x_{\sigma (k)})=f(x_{1},\cdots ,x_{k})}\mbox{ for all } \sigma \in  \mathfrak{S}_{k},$$
	$\mbox{ where } {\mathfrak S}_{k} \mbox{ is the group of  permutations of }\{1,\cdots,k\}$.  For $k\in \mathbb N$, the set of symmetric $k$-linear maps is denoted by $S^k(  J,V)$. 	
\end{defn}

\begin{defn}
A homomorphism of Hom-Jacobi-Jordan algebras $\phi \colon \left( J,[\cdot,\cdot],\alpha\right)\to  \left( J',[\cdot,\cdot]',\alpha'\right) $	
is a linear map $\phi \colon J\to J'  $ such that 
\begin{align}
\phi \circ \alpha =\alpha' \circ \phi\\
	\phi\left( [x,y]\right)=\left[ \phi(x),\phi(y) \right]'	\label{isom}                         
\end{align}
for all $x,y\in J$.	The Hom-Jacobi-Jordan algebras $\left( J,[\cdot,\cdot],\alpha\right)  $
and $\left( J',[\cdot,\cdot]',\alpha'\right) $
are isomorphic if there is
a Hom-Jacobi-Jordan algebras homomorphism $\phi \colon \left( J,[\cdot,\cdot],\alpha\right)\to  \left( J',[\cdot,\cdot]',\alpha'\right) $
such that $\phi \colon J\to J'  $ is bijective.		                      
\end{defn}
\begin{defn}
Let $ \left(J,[\cdot,\cdot] ,\alpha\right)  $ be a Hom-Jacobi-Jordan algebra.
A subspace $I$ of $J$ is said to be an ideal if for $x\in I$ and $ y\in J $
we have $ [x,y] \in I$ and $ \alpha(x) \in I$, Moreover, $I$ is called a  abelian ideal of $ J $ if $ [I,I]=0 $.
A Hom-Jacobi-Jordan  subalgebra $H$ of $J$ is a linear 	subspace such that $ [H,H]\subset H  $ and $ \alpha(H)\subset H.$
\end{defn}
\begin{defn}
Let $\left(J,[\cdot,\cdot],\alpha \right) $ be a multiplicative  Hom-Jacobi-Jordan algebra. We put $ D^0(J) =J$, $ D^1(J) =[J,J]$ and, more generally $ D^{k+1}(J)=[D^k(J),D^k(J)] $, for every $k\geq 0$.  
All these subspaces are ideals
of $ J $ and we have the following decreasing sequence, called the derived sequence
\[ J=D^0(J)\supset D^1(J)\supset\cdots \supset D^k(J) \supset  \cdots \]
A Hom-Jacobi-Jordan algebra $J$ is called solvable 	if there is a integer $k$ such that $D^k(J)=\{0\}$.
\end{defn}	
\begin{prop}
Any solvable Hom-Jacobi-Jordan algebra	has an abelian ideal.	               
\end{prop}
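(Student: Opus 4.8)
The plan is to exploit the terminal behaviour of the derived sequence directly: solvability forces this sequence to reach $\{0\}$ after finitely many steps, and the key idea is to single out the last stage before it does so, which will automatically turn out to be abelian. No deep structural input is needed beyond the facts already recorded about the derived sequence.

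First I would dispose of the trivial case $J=\{0\}$, in which $J$ itself serves as an abelian ideal. Assuming $J\neq\{0\}$, solvability guarantees that the set of integers $k$ with $D^k(J)=\{0\}$ is nonempty; let $k$ be its smallest element. Since $D^0(J)=J\neq\{0\}$, necessarily $k\geq 1$, so it makes sense to consider the subspace $I:=D^{k-1}(J)$, and by the minimality of $k$ we have $I\neq\{0\}$.

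Next I would verify that $I$ is the desired abelian ideal. On the one hand, $I=D^{k-1}(J)$ is an ideal of $J$: this is precisely the assertion already stated just after the definition of the derived sequence, namely that every $D^{j}(J)$ is an ideal, so in particular $[I,J]\subset I$ and $\alpha(I)\subset I$. On the other hand, by the recursive definition of the derived sequence, $[I,I]=[D^{k-1}(J),D^{k-1}(J)]=D^{k}(J)=\{0\}$, which says exactly that $I$ is abelian. Hence $I$ is a nonzero abelian ideal of $J$.

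There is essentially no analytic obstacle here; the whole content is the combinatorial observation that the last nonzero stage of the derived sequence is abelian. The one point that genuinely uses the ambient hypotheses is that the $D^{j}(J)$ are honest ideals (in particular $\alpha(D^{j}(J))\subset D^{j}(J)$, which relies on the multiplicativity of $\alpha$), but since this has already been established it may simply be invoked.
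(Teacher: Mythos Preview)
Your proof is correct and follows essentially the same idea as the paper: both identify a term of the derived sequence whose derived subalgebra vanishes, making it an abelian ideal. You are merely more careful than the paper---you handle the trivial case separately and take the minimal $k$ so as to produce a \emph{nonzero} abelian ideal, whereas the paper simply notes $[D^k(J),D^k(J)]\subset D^{k+1}(J)=\{0\}$ without worrying about nontriviality.
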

 \begin{proof}            
For $j=k$,  we have $ [D^k(J),D^k(J)]\subset D^{k+1}(J) $ and $D^{k+1}(J)=\{0\} $.
\end{proof}
\begin{defn}
The center of a Hom-Jacobi-Jordan algebra $\left( J,[\cdot,\cdot],\alpha\right) $ is the  subspace \[\mathfrak Z(J)=\{x\in J\mid [x,y]=0,\forall y\in J  \}.\] 
\end{defn}

\section{The second  cohomology group of  Hom-Jacobi-Jordan algebras}\label{2group}
The first and second cohomology operators of Jacobi-Jordan algebra was introduced in \cite{Pasha}. In the following,  we 
give a construction of first and second cohomology operators of  Hom-Jacobi-Jordan algebra. Then, we define their second  cohomology group.\\

Let $\left(M,[\cdot,\cdot]_{M},\alpha_{M} \right) $  be a finite dimensional solvable multiplicative Hom-Jacobi-Jordan algebra, and let $V$ be an abelian ideal of $M$. Assume that there exists a subspace   $J$ of $ M $ such that 
 $J\oplus V=M$ and $\alpha_{M}(J)\subset J  $. Therefore, we can define the linear map $ \alpha\colon J\to J $ by $ \alpha(x)=\alpha_{M}(x) $. Define a
symmetric bilinear maps $[\cdot,\cdot]\colon J\to J$ by $[x,y]=\pi_{J}\left( [x,y]_{M} \right)  $ where $  \pi_{J}$ is the natural projection of $M$ onto $J$.
 Then, it is clear that $\left(J,[\cdot,\cdot],\alpha \right) $ is a Hom-Jacobi-Jordan algebra.
 \subsection{Construction of the $2$-coboundary operator}
  Since $V$ is an ideal of $M$, we can define the linear maps $\rho\colon J\to End(V)$ by $\rho(x)v=[x,v]_{M}$ and $ \beta\colon V\to V $ by $\beta(v)=\alpha_{M}(v)$.  Then, the triple $\left(V,\rho,\beta \right) $ is called representation of $J$ on $V$. Since $ \left(M,[\cdot,\cdot]_{M},\alpha_{M} \right) $ is multiplicative,
$\alpha_{M}\left( [x,v]_{M}\right)=\left[\alpha_{M}(x),\alpha_{M}(v) \right]_{M}  $. This implies 
$ \beta\left( \rho(x)v\right) =\rho\left( \alpha(x)\right) \beta(v) $. Moreover, by the 
Hom–Jacobi identity,  for all  $x,y\in J  $ and  $ v\in V $, we have
$	\rho\left([x,y] \right)\circ \beta (v) +\rho\left(\alpha(x)\right)\rho(y)(v)+  \rho\left( \alpha(y)	\right) \circ \rho(x)v=0.  $ Therefore, the definition of representation may be overwritten as follows.
\begin{defn}\label{prilimenaryRep}
A representation of a Hom-Jacobi-Jordan algebra $(J,[\cdot,\cdot],\alpha)$ on a vector space $V$
with respect to $ \beta\in End(V)   $ is a linear map 
$\rho\colon J\to End(V)$, such that 
for any $x\in J$, $ v\in V $,  the following equalities are satisfied: 
\begin{align}
	\rho(\alpha(x))\circ \beta&=\beta\circ \rho(x)\label{rep1}\\
	\rho\left([x,y] \right)\circ \beta &=-\rho\left(\alpha(x)\right)\rho(y)-  \rho\left( \alpha(y)	\right) \circ \rho(x). \label{rep2}                      
\end{align}
 We denote a representation of a Hom-Jacobi-Jordan algebra $(J,[\cdot,\cdot],\alpha)$ by a triple $ \left(V,\rho,\beta \right)  $.	            
\end{defn}
Our next goal is to define the set of $2$-Hom-cochain of $J$ on $V$.
Define a
symmetric bilinear maps $\theta\colon J\to J$ by $\theta(x,y)=\pi_{V}\left( [x,y]_{M} \right)  $ where $  \pi_{V}$ is the natural projection of $M$ onto $V$.
Since the Hom-Jacobi-Jordan algebra $\left(M,[\cdot,\cdot]_{M},\alpha_{M} \right) $ is multiplicative, $[\cdot,\cdot]_{M}$ and $\alpha_{M}  $ satisfies 
Eq.\eqref{multiplicative}, it follows that
$ \alpha_{M}\left([x,y]_{M} \right)=\left[\alpha_{M}(x),\alpha_{M}(y) \right]_{M}   $ for all $x,y\in J$. Then, $ \alpha\left([x,y] \right)=\left[\alpha(x),\alpha(y) \right]   $ and $\beta\left(\theta(x,y) \right)=\theta\left( \alpha(x),\alpha(y)\right) $. The last equality	is called $2$-cochain condition and it denoted  $ \beta\circ \theta =\theta\circ\alpha $. 
\begin{defn}
	A symmetric bilinear map 
	with values in $V$ is defined to be a $2$-cochain $f\in S^{2}(J, V)$ such it is compatible with $ \alpha $ and $ \beta $ in the sense that $ \beta\circ f =f\circ\alpha $.  Denote $C^{2}_{\alpha,\beta}(J,\ V)  $ the set of $2$-cochain:
	\begin{equation*}
		C^{2}_{\alpha,\beta}(J,\ V)=\left\lbrace f\in S^2(J,V)\mid \beta\circ f =f \circ\alpha \right\rbrace  .\\                             
	\end{equation*}
\end{defn}
Finally, we define the $2$ cocycles on $V$ followed by the $2$ cobord.
We have 
\begin{equation*}
\left[\alpha_{M}(x) ,[ y,z ]_{M} \right]_{M}
=[\alpha(x),[y,z]]+\theta\left(\alpha(x),[y,z] \right) +\rho(\alpha(x))\theta(y,z)      
\end{equation*}
according to the Hom Jacobi identity  for $ x,y,z\in J $,

\begin{gather*}
\underbrace{ [\alpha(x),[y,z]]+[\alpha(y),[x,z]]+[\alpha(z),[y,z]]}_{\in J}\\
+\underbrace{\theta\left(\alpha(x),[y,z] \right)
+\theta\left(\alpha(y),[x,z] \right)
+\theta\left(\alpha(z),[x,y] \right)}_{\in V}\\
+\underbrace{\rho(\alpha(x))\theta(y,z)+\rho(\alpha(y))\theta(x,z) 
+\rho(\alpha(z))\theta(x,y)	}_{\in V}=  0              
\end{gather*}
Hence, 
\begin{equation}\label{cocycle}
	\begin{array}{ll}
\theta\left(\alpha(x),[y,z] \right)
+\theta\left(\alpha(y),[x,z] \right)
+\theta\left(\alpha(z),[x,y] \right)\\
+\rho(\alpha(x))\theta(y,z)+\rho(\alpha(y))\theta(x,z) 
+\rho(\alpha(z))\theta(x,y)=0
	\end{array}
\end{equation}
Therefore, we say that the $
2$-cochain satisfies $2$-cocycle condition and the next definitions are provided.
\begin{defn}\label{pr2cocycle}
	We call $2$-coboundary operator of Hom-Jacobi-Jordan algebra $ J$ 
	the map \[d^2: C^{2}_{\alpha,\beta}(J,V)\rightarrow S^{3}(J,V),\qquad f\mapsto d^2(f) \]
	defined by
	\begin{align}\label{1adjoint2}
		d^{2}(f)(x,y,z)&=f\left(\alpha(x),[y,z] \right)
		+f\left(\alpha(y),[x,z] \right)
		+f\left(\alpha(z),[x,y] \right)\\
		+&\rho(\alpha(x))f(y,z)+\rho(\alpha(y))f(x,z) 
		+\rho(\alpha(z))f(x,y).\nonumber
	\end{align}
A $2$-Hom-cochain $ f $ is a $2$-cocycle	 if and only if $ d^{2}(f)=0 $. Hence, $ \ker(d^2)=Z_{\alpha,\beta}^2(J,V) $ where $Z_{\alpha,\beta}^2(J,V)  $ is the set of $2$-cocycle of the Hom-Jacobi-Jordan algebra with coefficients in the representation $ (V,\rho,\beta) $.
\end{defn}
\subsection{Construction of the $1$-coboundary operator}
Let $[\cdot,\cdot]'_{M}\colon M\times M\to M  $ be a symmetric bilinear map 
such that 
$\left(M,[\cdot,\cdot]'_{M},\alpha_{M} \right) $ is a Hom-Jacobi-Jordan  algebra. Then, the linear map $ \theta'=\pi_{V}\circ [\cdot,\cdot]'_{M} $ is a $2$-cocycle of $J$ on $V$.
Suppose there exists an isomorphism  of Hom-Jacobi-Jordan
algebra  $\Phi \colon \left( M,[\cdot,\cdot]_{M},\gamma\right) \to \left( M,[\cdot,\cdot]'_{M},\gamma\right) $.
 We define the linear maps $s\colon J\to J  $ by 
$s(x)=\pi_{J}\circ \Phi(x) $, $i\colon J\to V  $  by 
$i(x)=\pi_{V}\circ \Phi(x) $,
 $s'\colon V\to J  $  by 
$s'(v)=\pi_{J}\circ \Phi(v) $,
 and $i'\colon V\to V  $ by 
$i'(v)=\pi_{V}\circ \Phi(v) $. 
 We have  $ \Phi\left( [x,y]_{M}\right)=\left[\Phi(x),\Phi(y) \right]'_{M}   $.
 Then, $ \Phi\left( [x,y]+\theta(x,y)\right)=\left[s(x)+i(x),s(y)+i(y) \right]'_{M}   $. Therefore,
 \begin{gather*}
\underbrace{ s\left([x,y] \right)+s'\left( \theta(x,y)\right)}_{\in J}
 +\underbrace{ i\left( [x,y]\right) +i'\left( \theta(x,y)\right)}_{\in V} 
 =\underbrace{\left[s(x),s(y) \right]}_{\in J}\\
 +
 \underbrace{\rho\left(s(x) \right)i(y)+\rho\left(s(y) \right)i(x) 
 + \theta'\left(s(x),s(y) \right) }_{\in V}.  	                    
 \end{gather*}
This gives
\begin{gather*}
s\left([x,y] \right)=\left[s(x),s(y) \right]-s'\left( \theta(x,y)\right)	               
\end{gather*}
and
\begin{gather}\label{f1}
i'\left( \theta(x,y)\right)-\theta'\left(s(x),s(y) \right)
=-i\left( [x,y]\right)+\rho\left(s(x) \right)i(y)+\rho\left(s(y) \right)i(x).	
\end{gather}
We assume that $s(J)\cap s'(V)=\{0\}  $. So $s(J) \oplus s'(V)=J  $ and we can define a linear map $f\colon J\to V$ by  
\begin{gather}\label{f}
f(s(x))=i(x) \text{  and  }     f(s'(v))=i'(v).	                                      
\end{gather}
Moreover, assume that $ i'\left( \theta(x,y)\right) \in s'(V)$.
Therefore, we obtain
	\begin{gather*}
f\left( s'(\theta(x,y))\right)-\theta'\left(s(x),s(y) \right)
=-f\left(s ([x,y])\right)+\rho\left(s(x) \right)f(s(y))+\rho\left(s(y) \right)f(s(x))	                
	\end{gather*} 	
Hence,
	\begin{align*}
	\theta'\left(s(x),s(y) \right)
	&=f\left( s'(\theta(x,y))\right)+f\left(s ([x,y])\right)-\rho\left(s(x) \right)f(s(y))-\rho\left(s(y) \right)f(s(x))\\
	&=f\left( \left[s(x),s(y) \right]\right)-\rho\left(s(x) \right)f(s(y))-\rho\left(s(y) \right)f(s(x)). 	                
\end{align*}
Thus,
	\begin{gather*}
	\theta'\left(x',y' \right)
	=f\left( \left[x',y' \right]\right)-\rho(x')f(y'))-\rho(y' )f(x')	                
\end{gather*}
for all $x',y'\in s(J).$\\
Now, let’s determine the properties of $f$.
Since $\Phi$ is an isomorphism of Hom-Jacobi-Jordan algebra, we have $ \Phi\alpha_{M}(x)=\alpha_{M}(\Phi(x)) $.
Hence, 
	$s(\alpha(x))=\alpha(s(x))$	                            
and 
	$i\left(\alpha(x) \right) = \beta(i(x))$.	
Thus, $ f\left(\alpha( s(x))\right)=\beta f\left( s(x)\right) $ for all $x\in J$.
Therefore, the definitions of $1$-Hom cochain and $1$-cobord are as follows:
\begin{defn}
We say that $f$ is a $1$-cochain if $ f\colon J\to V $
 is
a linear  map satisfying $ f\circ \alpha=\beta \circ f $.
The  space of all $1$-cochains will be denoted by $ C_{\alpha,\beta}^{1}(J,V) $.\\
We call $1$-coboundary operator of Hom-Jacobi-Jordan algebra $ J $ 
the map \[ d^1: C^{1}_{\alpha,\beta}(J,V)\rightarrow S^{2}(J,V),\qquad f\mapsto d^1f \]
defined by 
\begin{eqnarray}\label{adjoint1}
	d^{1}(f)(x,y)&=&f([x,y])-\rho(x)f(y)-\rho(y)f(x).
\end{eqnarray}	                  
\end{defn}
\subsection{Second cohomology group}
Let 
$ \left(V,\rho,\beta \right)  $
 be a representation of a Hom-Jacobi-Jordan algebra $ \left(J,[\cdot,\cdot],\alpha \right)  $. 
\begin{thm}\label{cobord} With the above  notation  in this section, we have
	$ d^{2}\circ d^1=0 $ .                  
\end{thm}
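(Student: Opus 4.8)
The plan is to take an arbitrary $1$-cochain $f\in C^{1}_{\alpha,\beta}(J,V)$, set $g=d^1(f)$ so that $g(x,y)=f([x,y])-\rho(x)f(y)-\rho(y)f(x)$, and substitute this into the defining formula \eqref{1adjoint2} for $d^2$. Expanding $d^2(g)(x,y,z)$ produces a long sum which I would sort into four groups according to their structure.

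First I would isolate the three terms in which $f$ is applied to a double bracket, namely
\[
f([\alpha(x),[y,z]])+f([\alpha(y),[x,z]])+f([\alpha(z),[x,y]]).
\]
Since $[\cdot,\cdot]$ is symmetric we may replace $[x,z]$ by $[z,x]$, after which the argument of $f$ is exactly the left-hand side of the Hom-Jacobi identity \eqref{jacobie}; by linearity of $f$ this group vanishes. Next I would collect the terms of the form $\rho(\alpha(\cdot))f([\cdot,\cdot])$: those coming with a minus sign from expanding $g(\alpha(x),[y,z])$ and cyclic analogues cancel exactly against those coming with a plus sign from expanding $\rho(\alpha(x))g(y,z)$ and its analogues.

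The heart of the computation is the remaining two groups. From the first half I am left with the three terms $-\rho([y,z])f(\alpha(x))$, $-\rho([x,z])f(\alpha(y))$, $-\rho([x,y])f(\alpha(z))$; here I would first use the $1$-cochain condition $f\circ\alpha=\beta\circ f$ to replace each $f(\alpha(\cdot))$ by $\beta(f(\cdot))$, and then apply the representation axiom \eqref{rep2} in the form $\rho([a,b])\circ\beta=-\rho(\alpha(a))\rho(b)-\rho(\alpha(b))\rho(a)$. This rewrites these three terms as a sum of six terms of the type $\rho(\alpha(\cdot))\rho(\cdot)f(\cdot)$. From the second half, expanding $\rho(\alpha(x))[-\rho(y)f(z)-\rho(z)f(y)]$ and its cyclic analogues yields six further terms of precisely the same type but with opposite sign.

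Finally I would verify that these two families of six terms match termwise and cancel — for instance the contribution $\rho(\alpha(y))\rho(z)f(x)$ produced in the third group is killed by the $-\rho(\alpha(y))\rho(z)f(x)$ produced in the fourth group, and similarly for the other five pairs. Since all four groups then vanish, $d^2(d^1(f))=0$ for every $f$, which is the assertion. The only real obstacle is the bookkeeping: one must track the six mixed terms $\rho(\alpha(\,\cdot\,))\rho(\cdot)f(\cdot)$ with their correct index placement and sign, but no further structural input beyond \eqref{jacobie}, the symmetry of the bracket, the cochain condition, and \eqref{rep2} is needed.
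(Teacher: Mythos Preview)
Your proposal is correct and follows essentially the same approach as the paper: both expand $d^2(d^1f)(x,y,z)$ directly, kill the three $f(\text{double bracket})$ terms via the Hom--Jacobi identity, cancel the $\pm\rho(\alpha(\cdot))f([\cdot,\cdot])$ terms pairwise, and handle the remaining $\rho$--terms using $f\circ\alpha=\beta\circ f$ together with \eqref{rep2}. The only cosmetic difference is that the paper groups each $-\rho([a,b])f(\alpha(c))$ term directly with the two $-\rho(\alpha(\cdot))\rho(\cdot)f(c)$ terms sharing the same $f(c)$ and applies \eqref{rep2} to each such triple, whereas you first expand all three via \eqref{rep2} into six terms and then cancel against the six from your fourth group; the underlying computation is identical.
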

\begin{proof}
	We Have
	\begin{eqnarray*}
		d^{1}(f)(x,y)&=&f\left( [x,y]\right) -
		\rho(x)f(y)	-\rho(y)f(x).
	\end{eqnarray*}
	Then
		\begin{align}
			&	d^2\circ d^1(f)(x,y,z)\nonumber\\	
			&=	d^1(f)([x,y],\alpha(z))
			+d^1(f)\left( [x,z],\alpha (y)\right) 
			+d^1(f)\left( \alpha (x),[y,z]\right) \nonumber\\	
			+&\rho\left( \alpha(x)\right) d^1(f)(y,z)
			+\rho\left( \alpha(y)\right) d^1(f)(x,z)
			+\rho\left( \alpha(z)\right) d^1(f)(x,y)\nonumber\\	
			&=f\left( \left( [x,y],\alpha(z)\right) \right)
			+f\left([x,z],\alpha (y)\right)
			+f\left([y,z],\alpha (x)\right)
			\label{cobordJacobi}\\
			&-\rho\left([x,y] \right) f(\alpha(z))
			-\rho\left(\alpha(x) \right)\rho(y) f(z)
			-\rho\left(\alpha(y) \right)\rho(x) f(z)
			\label{cobordCochain1}\\ 
			&-\rho\left([x,z] \right) f(\alpha(y))
			-\rho\left(\alpha(x) \right)\rho(z) f(y)
			-\rho\left(\alpha(z) \right)\rho(x) f(y)
			\label{cobordCochain2}\\
			&-\rho\left([y,z]\right) f(\alpha(x))
			-\rho\left(\alpha(z) \right)\rho(y) f(x)
			-\rho\left(\alpha(y) \right)\rho(z) f(x)
			\label{cobordCochain3}\\
			&-\rho\left( \alpha(z)\right) f\left( [x,y]\right) 
			+\rho\left( \alpha(z)\right) f\left( [x,y]\right) 
			-\rho\left( \alpha(y)\right) f\left( [x,z]\right)\\
			&+\rho\left( \alpha(y)\right) f\left( [x,z]\right) 
			-\rho\left( \alpha(x)\right) f\left( [y,z]\right)
			+\rho\left( \alpha(x)\right) f\left( [y,z]\right) 
		\end{align}	 		
		By the Hom-Jacobi identity, we obtain that Eq.\eqref{cobordJacobi}=0. \\
		Since $ f\circ \alpha=\beta \circ f $  and the linear map $ \rho $  satisfies  \eqref{rep2}, we have that  Eq.\eqref{cobordCochain1}=0,  Eq.\eqref{cobordCochain2}=0  and Eq.\eqref{cobordCochain3}=0. The sum of the other six items is zero obviously. Therefore, we have $  d^2\circ d^1(f)(x,y,z)=0$. The
		proof is completed.         
	\end{proof}
\begin{defn}
The $2^{th}$ cohomology  group is the quotient $  H^{2}_{\alpha,\beta}(J,V)=Z^{2}_{\alpha,\beta}(J,V)/ B_{\alpha,\beta}^{2}(J,V)$ where $Z_{\alpha,\beta}^{2}(J,V)=\ker \ d^{2}$ and $B^{2}_{\alpha,\beta}(J,V)=Im \ d^{1}.$
\end{defn}
\section{Representation of Hom-Jacobi-Jordan algebras}
In this section,  we give some examples of representations 
that we will need in the remainder of the paper.

\subsection{Representation on V'=End(J,V)}	
Let $V'=End(J,V)$ be the  vector space of linear map  $f\colon   J\to  V $.
We  define the linear map 
$ \beta'\colon V'\to V'  $ by
\begin{equation}\label{alphaprime}
\beta'(Z)=Z\left( \alpha(\cdot)\right)	      
\end{equation}
and
the  linear map $\rho'\colon J\to End(V')$   by 
\[\rho'(x)Z=Z\left( [x,\cdot]\right) .\]
Let $Z\in V'$, $ x,y\in J $. We compute the right hand side of the identity 
\eqref{rep2}, we obtain
\begin{gather*}
-\rho'\left( \alpha(x)\right) \rho'(y)Z-\rho'\left( \alpha(y)\right) \rho'(x)Z
=-Z\big(\left[  y,[\alpha(x),        \cdot]  \right]             \big) 
-Z\big(\left[   x,[\alpha(y),        \cdot] \right]             \big). 	              
\end{gather*}
The left hand side of \eqref{rep2} gives us 
\begin{equation*}
\rho'\left( [x,y]\right) 	\beta'(Z) 
=Z\big(\alpha\big(\left[   [x,y],\cdot  \right]   \big)                   \big). 
\end{equation*}
Therefore,  we have the following observation.
\begin{prop}\label{prepcoadjoint}
The triple	$ \left( V',\rho',\alpha' \right)  $	is a representation of $J$ if and only if 
\begin{equation}\label{repcoadjoint}
	\alpha\big(\left[   [x,y],\cdot  \right]   \big)  =
	-\left[  y,[\alpha(x),        \cdot]  \right]            
	-\left[   x,[\alpha(y),        \cdot] \right]          		                                  
\end{equation}
for all $x,y,z\in J$. In this case, $ \left( V',\rho',\alpha' \right)  $ is called the  generalized coadjoint representation. 	                  
\end{prop}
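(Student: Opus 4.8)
The plan is to unwind the assertion that $(V',\rho',\beta')$ is a representation through Definition \ref{prilimenaryRep}: by that definition it means exactly that the two operator identities \eqref{rep1} and \eqref{rep2} hold for the pair $(\rho',\beta')$ acting on $V'$. Since each of these is an equality of endomorphisms of $V'$, I would test both against an arbitrary element $Z\in V'=\mathrm{End}(J,V)$ and then evaluate the resulting member of $V'$ at an arbitrary point of $J$, thereby converting an identity in $\mathrm{End}(V')$ into one valued in $V$. The single recurring tool throughout will be the fact that, for $V\neq 0$, the maps $Z\in\mathrm{End}(J,V)$ separate the points of $J$; hence an identity $Z(a)=Z(b)$ holding for all such $Z$ is equivalent to $a=b$.

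For \eqref{rep2} the computation is precisely the one already displayed before the statement: testing against $Z$ and evaluating at $w\in J$, the left-hand side yields $\rho'([x,y])\beta'(Z)=Z\big(\alpha([[x,y],\cdot])\big)$, while the right-hand side yields $-Z\big([y,[\alpha(x),\cdot]]\big)-Z\big([x,[\alpha(y),\cdot]]\big)$. Thus \eqref{rep2} is equivalent to the requirement that $Z\big(\alpha([[x,y],w])\big)=-Z\big([y,[\alpha(x),w]]\big)-Z\big([x,[\alpha(y),w]]\big)$ hold for every $Z\in V'$ and every $w\in J$. The inferential heart is the passage from this \emph{for all $Z$} statement to the bracket identity \eqref{repcoadjoint}: applying the separation principle with $a=\alpha([[x,y],w])$ and $b=-[y,[\alpha(x),w]]-[x,[\alpha(y),w]]$, one at a time for each $w$, gives exactly \eqref{repcoadjoint}. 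Conversely, substituting \eqref{repcoadjoint} back into the two expressions makes them agree for every $Z$, so \eqref{rep2} holds. This separation step is short but is where I would take most care, since it is what powers the \emph{only if} direction.

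It remains to secure \eqref{rep1}, and this is the step I expect to be the main obstacle, because it is \emph{not} a formal consequence of \eqref{repcoadjoint}. Evaluating $\rho'(\alpha(x))\circ\beta'$ and $\beta'\circ\rho'(x)$ on $Z$ and at a point $u\in J$ produces $Z\big(\alpha([\alpha(x),u])\big)$ and $Z\big([x,\alpha(u)]\big)$ respectively, so by the same separation argument \eqref{rep1} is equivalent to the $\alpha$--bracket compatibility $\alpha([\alpha(x),u])=[x,\alpha(u)]$ for all $x,u\in J$. Using multiplicativity \eqref{multiplicative} the left side equals $[\alpha^{2}(x),\alpha(u)]$, so \eqref{rep1} amounts to $[\alpha^{2}(x),\alpha(u)]=[x,\alpha(u)]$, which does not follow from multiplicativity alone. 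I would therefore either invoke a standing hypothesis on $\alpha$ under which it holds — for instance $\alpha^{2}=\mathrm{id}$, or $\alpha^{2}(x)-x\in\mathfrak Z(J)$, and in particular the classical case $\alpha=\mathrm{id}$ — or record it as the precise additional condition accompanying \eqref{repcoadjoint}. With \eqref{rep1} thus accounted for and the equivalence of \eqref{rep2} with \eqref{repcoadjoint} established, the proposition follows.
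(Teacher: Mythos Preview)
Your treatment of \eqref{rep2} is exactly the paper's: the two displayed computations preceding the proposition are the whole of the argument there, and your separation-of-points reasoning makes explicit the step the paper leaves implicit. So on the equivalence of \eqref{rep2} with \eqref{repcoadjoint} you and the paper agree completely.

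Your observation about \eqref{rep1} is well taken and in fact goes beyond what the paper does: the paper simply does not verify \eqref{rep1} at all, and the proposition is stated on the basis of the \eqref{rep2} computation alone. Your analysis is correct that \eqref{rep1} for $(\rho',\beta')$ unwinds to $\alpha([\alpha(x),u])=[x,\alpha(u)]$, which is not a consequence of \eqref{repcoadjoint}; the paper tacitly treats this as part of the ambient hypotheses (and indeed later, just before Proposition~\ref{gamma}, it assumes the generalized coadjoint identity outright). So there is no error in your reasoning; rather, you have isolated an implicit side condition that the paper absorbs into its standing assumptions without comment.
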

Associate to the generalized coadjoint representation $ \rho' $, the coboundary operator $ d^{1}\colon C_{\alpha,\beta}^1\to C_{\alpha,\beta}^2$ (resp. $ d^{2}\colon C_{\alpha,\beta}^2\to C_{\alpha,\beta}^3 $)  defined in \eqref{adjoint1} (resp. \eqref{1adjoint2} ) are given by 
\begin{equation*}
d'^{1}\colon C_{\alpha,\alpha'}^1\to C_{\alpha,\beta'}^2;		d'^{1}(f)(x,y)=f([x,y])
-f(y)\big([x,\cdot] \big)
-f(x)\big([y,\cdot] \big)
\end{equation*}
and $d'^{2}\colon C_{\alpha,\alpha'}^2\to C_{\alpha,\alpha'}^3$;
\begin{align*}
d'^{2}g(x,y,z) 
&= g([x,y],\alpha(z))
+g([x,z],\alpha(y))
+g([y,z],\alpha(x))\\
+&g(x,y)([\alpha(z),\cdot])
+g(x,z)([\alpha(y),\cdot])
+g(y,z)([\alpha(x),\cdot]).
\end{align*}
Hence, by Theorem \ref{cobord} we have
\begin{equation}\label{cobordcoadjoint}
d'^{2}\circ d'^{1}=0 .               
\end{equation}
In the particular case when $V=\R$, we obtain the dual space $J^*$ and we denote 
\begin{align*}
C_{r}^2(J,\R)&=\{ f \text{ bilinear form } \mid f(x,\cdot) \in C_{\alpha,\alpha'}^1(J,J^*), \forall x\in J        \}\\ 	
C_{r}^3(J,\R)&=\{ f \text{ trilinear form } \mid f(x,y,\cdot) \in C_{\alpha,\alpha'}^2(J,J^*) , \forall x, y\in J         \}\\  	
C_{r}^4(J,\R)&=\{ f\text{  $4$-linear form } \mid f(x,y,z,\cdot) \in S^3(J,J^*) , \forall x, y,z\in J         \}   . 	              
\end{align*}		
Define $ d_{r}^2\colon C_{r}^2(J,\R)\to C_{r}^3(J,\R) $ by 
\begin{gather}\label{operatorCoadjoint4}
d_{r}^2f(x,y,t) 
=f([x,y],t)-f(y,[x,t])-f(x,[y,t]).
\end{gather}
and define $ d_{r}^3\colon C_{r}^3(J,\R)\to C_{r}^4(J,\R) $ by
\begin{align}\label{operatorCoadjoint3}
d_{r}^3\gamma(x,y,z,t)&=
\gamma(d(x,y),\alpha(z),t)
+\gamma([x,z],\alpha(y),t)
+\gamma([y,z],\alpha(x),t)\\	
+& \gamma\left( x,y,[\alpha(z),t] \right)
+\gamma\left( y,z,[\alpha(x),t] \right)
+\gamma\left( x,z,[\alpha(y),t] \right)\nonumber
\end{align}
\begin{thm}\label{CohomologyReal}
With the above notations, we have
$d_{r}^3\circ d_{r}^2=0$.
\end{thm}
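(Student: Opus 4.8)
The plan is to recognize $d_r^2$ and $d_r^3$ as the coadjoint coboundary operators $d'^1$ and $d'^2$ (in the case $V=\R$, so that $V'=\mathrm{End}(J,\R)=J^*$) read through the natural ``currying'' identification that transfers the final argument of a real multilinear form into the target $J^*$. Once this identification is set up and shown to intertwine the two pairs of operators, the desired identity $d_r^3\circ d_r^2=0$ becomes a mere restatement of \eqref{cobordcoadjoint}.

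Concretely, first I would introduce for each relevant $k$ the currying map $\Psi_k$ sending a real $(k+1)$-linear form $f$ to the $J^*$-valued $k$-linear map defined by $\Psi_k(f)(x_1,\dots,x_k)(t)=f(x_1,\dots,x_k,t)$. The first task is to check that $\Psi_k$ is a bijection onto the correct coadjoint cochain space. For $k=1$ the defining condition of $C_r^2(J,\R)$, namely that $x\mapsto f(x,\cdot)$ lie in $C^1_{\alpha,\alpha'}(J,J^*)$, is exactly the statement $\Psi_1(f)\in C^1_{\alpha,\alpha'}(J,J^*)$, which unwinds to $f(\alpha(x),t)=f(x,\alpha(t))$. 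For $k=2$ the symmetry in the first two slots together with the compatibility $f(\alpha(x),\alpha(y),t)=f(x,y,\alpha(t))$ is precisely membership in $C^2_{\alpha,\alpha'}(J,J^*)$, and for $k=3$ symmetry in the first three slots is membership in $S^3(J,J^*)$. Thus $\Psi_1,\Psi_2,\Psi_3$ are bijections $C_r^2\to C^1$, $C_r^3\to C^2$, $C_r^4\to S^3$ respectively.

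The second task is the intertwining. Evaluating $d'^1(\Psi_1 f)(x,y)$ at $t$ and using $\rho'(x)Z=Z([x,\cdot])$ gives
\[ d'^1(\Psi_1 f)(x,y)(t)=f([x,y],t)-f(y,[x,t])-f(x,[y,t])=d_r^2 f(x,y,t), \]
so $\Psi_2(d_r^2 f)=d'^1(\Psi_1 f)$; in particular $d_r^2 f\in C_r^3$. Likewise, evaluating $d'^2(\Psi_2\gamma)(x,y,z)$ at $t$ and again inserting the definitions of $\rho'$ and $\beta'=\alpha'$ reproduces term by term the six summands of \eqref{operatorCoadjoint3} (reading $d(x,y)=[x,y]$), so that $\Psi_3(d_r^3\gamma)=d'^2(\Psi_2\gamma)$. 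Both verifications amount to a direct matching of the defining formulas \eqref{adjoint1} and \eqref{1adjoint2} against \eqref{operatorCoadjoint4} and \eqref{operatorCoadjoint3}.

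With the two intertwining relations in hand, for any $f\in C_r^2(J,\R)$ I compute
\[ \Psi_3\big(d_r^3(d_r^2 f)\big)=d'^2\big(\Psi_2(d_r^2 f)\big)=d'^2\big(d'^1(\Psi_1 f)\big)=0, \]
the last equality being \eqref{cobordcoadjoint}. Since $\Psi_3$ is injective, $d_r^3(d_r^2 f)=0$, which proves the theorem. I expect the only genuine work to be the bookkeeping in the second task: confirming that the placements of $\alpha$ and of the bracket in each slot of $d_r^2$ and $d_r^3$ correspond exactly to the actions of $\rho'$ and $\beta'$ after currying, and that the symmetry and $\alpha,\beta$-compatibility constraints are preserved so that each curried cochain genuinely lands in the asserted coadjoint cochain space. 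The final cancellation requires no new computation, being inherited directly from \eqref{cobordcoadjoint}.
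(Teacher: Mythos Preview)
Your proposal is correct and follows exactly the paper's approach: the paper's proof consists of the two identifications $d_r^2 f(x,y,t)=d'^1 f(x,y)(t)$ and $d_r^3 f(x,y,z,t)=d'^2 f(x,y,z)(t)$, followed by an appeal to \eqref{cobordcoadjoint}. You have simply made the currying maps $\Psi_k$ and the verification that they land in the right cochain spaces explicit, which is a more careful rendering of the same argument.
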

\begin{proof}
We have, $	d_{r}^2f(x,y,t) 
=d'^{1}f(x,y)(t)  $ and $	d_{r}^3f(x,y,z,t) 
=d'^{2}f(x,y,z)(t)$. Then, by \eqref{cobordcoadjoint} we obtain $ d_{r}^3\circ d_{r}^2=0. $
\end{proof}	
\begin{prop}\cite{saadaoui}
Let $(V,\rho,\beta )$ be a representation of a Hom-Jacobi-Jordan algebra $(J,[\cdot,\cdot],\alpha)$ and  $\theta$ a  $2$-cocycle of $J$ on $V$.
Let $ \left(M,[\cdot,\cdot]_{M},\alpha_{M} \right)  $ be the extension of $J$ by $V$ by means of $\theta$.            
The triple $ \left(V',\rho',\beta' \right)  $, where $ \rho'\colon M\to End(V') $ is given by $ \rho'(x+v)f(\cdot))=f([x+v,\cdot]_{M}) $ and $ \beta'\colon V'\to V' $ is given by $ \beta'(f)=\beta \circ f   $,  defines a representation of the Hom-Jacobi-Jordan algebra $ \left(M,[\cdot,\cdot]_{M},\alpha_{M} \right)  $
if and only if 
\begin{align}
	\alpha\Big(\big[[x,y],t\big]       \Big)&=-	\Big[x, [\alpha(y),t]    \Big] 
	-\Big[y, [\alpha(x),t]    \Big]\label{coadjointprime1}\\
	\beta\Big(\rho(t)\theta(x,y)      \Big)&=
	-\rho\left( x\right) \theta (\alpha(y),t)
	-\rho\left( y\right) \theta (\alpha(x),t)\label{coadjointprime2}.                       
\end{align}    
\end{prop}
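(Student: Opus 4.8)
The plan is to verify directly the two axioms of Definition~\ref{prilimenaryRep} for the triple $(V',\rho',\beta')$, now read as data for the algebra $(M,[\cdot,\cdot]_{M},\alpha_{M})$. Recall first that, as the extension of $J$ by $V$ by means of $\theta$, the space $M=J\oplus V$ carries the bracket $[x+v,y+w]_{M}=[x,y]+\theta(x,y)+\rho(x)w+\rho(y)v$ and the twist $\alpha_{M}(x+v)=\alpha(x)+\beta(v)$; these two formulas are the only facts about $M$ I will use. Writing a generic element of $V'$ as a map $f$ and a generic test element of $M$ as $t$, I would record $(\rho'(X)f)(t)=f([X,t]_{M})$ and $(\beta' f)(t)=\beta(f(t))$ for $X\in M$, so that both axioms become pointwise identities in $V$ that can be expanded through the two structure formulas.

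First I would dispose of axiom~\eqref{rep1}, namely $\rho'(\alpha_{M}(X))\circ\beta'=\beta'\circ\rho'(X)$. Applying both sides to $f$ and evaluating at $t$, this follows from the multiplicativity \eqref{multiplicative} of $\alpha_{M}$ together with the cochain compatibility $\beta\circ f=f\circ\alpha_{M}$ built into $V'$, and it holds with no extra hypothesis. The content of the statement therefore sits entirely in axiom~\eqref{rep2}, that is $\rho'([X,Y]_{M})\circ\beta'=-\rho'(\alpha_{M}(X))\rho'(Y)-\rho'(\alpha_{M}(Y))\rho'(X)$.

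For \eqref{rep2} I would take $X=x$, $Y=y$ with $x,y\in J$, apply both sides to $f$, and evaluate at $t$. Expanding the nested brackets $[[x,y]_{M},t]_{M}$, $[y,[\alpha_{M}(x),t]_{M}]_{M}$ and $[x,[\alpha_{M}(y),t]_{M}]_{M}$ via the extension formula splits every argument of $f$ into its $J$-component and its $V$-component. Since $f$ is determined independently by its restrictions to $J$ and to $V$, the single identity \eqref{rep2} decouples into two independent identities. Reading off the $J$-component, where only the bracket $[\cdot,\cdot]$ of $J$ occurs, reproduces exactly the generalized coadjoint condition \eqref{repcoadjoint} of Proposition~\ref{prepcoadjoint}, i.e. equation \eqref{coadjointprime1}. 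Reading off the $V$-component produces a relation among the terms $\beta(\rho(t)\theta(x,y))$, $\rho(x)\theta(\alpha(y),t)$, $\rho(y)\theta(\alpha(x),t)$ together with the purely $\theta$-valued terms $\beta(\theta([x,y],t))$, $\theta(x,[\alpha(y),t])$, $\theta(y,[\alpha(x),t])$.

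The main obstacle is precisely the reduction of this $V$-component to the clean form \eqref{coadjointprime2}. I expect to handle the purely $\theta$-valued terms by invoking the $2$-cocycle identity \eqref{cocycle} for $\theta$, together with the cochain condition $\beta\circ\theta=\theta\circ\alpha$ and multiplicativity \eqref{multiplicative}, so that they cancel and only the $\rho\theta$-terms survive, leaving exactly \eqref{coadjointprime2}; keeping the symmetry of both $\theta$ and $[\cdot,\cdot]$ consistent when matching the arguments of \eqref{cocycle} to those coming out of the expansion is the delicate bookkeeping point. For the converse I would run the same computation backwards: assuming \eqref{coadjointprime1} and \eqref{coadjointprime2}, both the $J$- and $V$-components of \eqref{rep2} hold, axiom \eqref{rep1} holds unconditionally, and hence $(V',\rho',\beta')$ is a representation of $M$.
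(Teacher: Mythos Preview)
The paper does not prove this proposition; it is quoted from an external reference without argument, so there is nothing in the paper itself to compare your outline against.

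On its own merits, your plan has a real gap in the converse direction. Axiom \eqref{rep2} must hold for \emph{all} $X,Y\in M=J\oplus V$, but you test only $X=x$, $Y=y$ with $x,y\in J$. By bilinearity the remaining cases are $X\in J,\ Y\in V$ and $X,Y\in V$; the second is trivial since $V$ is abelian, but the mixed case unwinds to an identity among $\beta\!\left(\rho(t)\rho(x)v\right)$, $\rho\!\left([\alpha(x),t]\right)v$ and $\rho(x)\rho(t)\beta(v)$ that you have not shown to follow from \eqref{coadjointprime1}, \eqref{coadjointprime2} and the representation axioms for $(\rho,\beta)$. Without that check the ``if'' half is incomplete.

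There is a second issue in the step you flag as bookkeeping. The $\theta$-only block you must kill is $\beta\!\left(\theta([x,y],t)\right)+\theta\!\left(y,[\alpha(x),t]\right)+\theta\!\left(x,[\alpha(y),t]\right)$, and this is \emph{not} the $\theta$-part of the cocycle identity \eqref{cocycle}: in \eqref{cocycle} the twist $\alpha$ sits on the \emph{outer} argument of $\theta$, whereas here it sits \emph{inside} the inner bracket. The two patterns do not match under the cochain condition and multiplicativity alone; what you actually need is the vanishing of (the $\theta$-part of) the operator $d^2_c$ that the paper introduces immediately after this proposition, which is a genuinely different condition from $d^2\theta=0$. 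So this reduction requires either an extra hypothesis (for instance invertibility of $\alpha$, so that one can substitute and reindex) or a separate argument, and your sketch does not yet supply one.
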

Define 
$d^1_c\colon  C^{1}_{\alpha,\beta}(J,V)\to S^{2}(J,V) $
by 		\begin{eqnarray*}
d^{1}(f)(x,y)&=&f\left( [x,y]\right) -
\rho(x)f(y)	-\rho(y)f(x).
\end{eqnarray*}
and 
$d^2_c\colon S^{2}(J,V)\rightarrow C^{3}(J,V)$
by
\begin{align}\label{adjoint2}
d_{c}^{2}(f)(x,y,z)&=	\left(x,[\alpha(y),z] \right)
+f\left(y,\delta(z,\alpha(x)) \right)
+f\Big(\theta\left(z,[x,y]\right)\Big) \\
+& 	\rho\left(x \right) f(\alpha(y),z)
+\rho\left(y \right) f(z,\alpha(y))
+\beta\Big(\rho\left(z \right) f(x,y) \Big).\nonumber
\end{align}
where $C^3(J,V)=\{\gamma\in Hom(J^3,V)\mid \gamma\left(x,y,t \right)=\gamma\left(y,x,t \right)\}$ .
\begin{thm}
We have $ 	d_{c}^{2}\circ d_{c}^1=0 $.    
\end{thm}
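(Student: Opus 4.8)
The plan is to deduce $d_c^2\circ d_c^1=0$ from Theorem \ref{cobord} rather than to grind out the composition by hand, exactly in the spirit of how Theorem \ref{CohomologyReal} was obtained from \eqref{cobordcoadjoint}. The key observation is that the operators $d_c^1$ and $d_c^2$ are nothing but the coboundary operators of Section \ref{2group} attached to the representation $(V',\rho',\beta')$ of the extension $(M,[\cdot,\cdot]_M,\alpha_M)$, the representation furnished by the preceding Proposition once \eqref{coadjointprime1} and \eqref{coadjointprime2} hold. Since Theorem \ref{cobord} is valid for \emph{any} representation, it applies to $(V',\rho',\beta')$ and already gives $d^2\circ d^1=0$ there; so the real content is to identify the stated $d_c^1,d_c^2$ with these operators and then invoke that identity.

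Concretely, I would set up the bridge between the two descriptions by the same ``freeze the last slot'' device used for the operators $d_r^\bullet$: a $V$-valued multilinear form $f$ on $J$ is identified with the $V'$-valued form $\widehat f$ given by $\widehat f(\dots)=f(\dots,\cdot)\in\mathrm{End}(J,V)=V'$, and under this identification I would check that \eqref{adjoint1} and \eqref{adjoint2} reproduce the formulas \eqref{1adjoint2} and \eqref{adjoint1} read for $(V',\rho',\beta')$. Granting the matching, the conclusion is immediate, since then
\[
d_c^2\circ d_c^1=d^2\circ d^1=0,
\]
the middle composition being the one for $(V',\rho',\beta')$. The conditions \eqref{coadjointprime1} and \eqref{coadjointprime2} are precisely what make $(V',\rho',\beta')$ a representation, and hence what make the middle equality an instance of Theorem \ref{cobord}; this is the analogue of the step from \eqref{cobordcoadjoint} to Theorem \ref{CohomologyReal}.

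The main obstacle is the term-by-term matching of \eqref{adjoint2} with the abstract second coboundary of $(V',\rho',\beta')$, because the $\theta$-dependent term $f(\theta(z,[x,y]))$ and the $\beta$-dependent term $\beta(\rho(z)f(x,y))$ must be seen to correspond exactly to the $\rho'$- and $\beta'$-contributions; this is where \eqref{coadjointprime2}, encoding the compatibility of the action with $\theta$, is consumed. Should this identification prove delicate to track through the bookkeeping of \eqref{adjoint2}, I would fall back on a direct expansion of $d_c^2(d_c^1 f)(x,y,z)$ patterned on the proof of Theorem \ref{cobord}: after substituting \eqref{adjoint1}, the terms partition into a block killed by the Hom-Jacobi identity \eqref{jacobie}, a block killed by the representation axiom \eqref{rep2}, a block killed by the cocycle relation \eqref{cocycle} for $\theta$ together with \eqref{coadjointprime1}--\eqref{coadjointprime2}, and a residue that cancels in pairs just as the six trailing terms do in Theorem \ref{cobord}.
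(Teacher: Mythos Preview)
The paper's proof is the single sentence ``It is straightforward,'' so the intended argument is a direct expansion; your fallback plan---substituting $d_c^1 f$ into \eqref{adjoint2} and cancelling terms via \eqref{jacobie}, \eqref{rep2}, \eqref{cocycle}, and \eqref{coadjointprime1}--\eqref{coadjointprime2}, mirroring the proof of Theorem~\ref{cobord}---is exactly this and will go through.

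Your primary strategy, identifying $d_c^1,d_c^2$ with the coboundary operators for the representation $(V',\rho',\beta')$ and then invoking Theorem~\ref{cobord}, is appealing but does not line up as cleanly as in the $d_r$ case. Two points of friction: first, the $(V',\rho',\beta')$ of the preceding Proposition is a representation of the \emph{extension} $M=J\oplus V$, not of $J$, so its cochain complex lives over $M$ while $d_c^1,d_c^2$ act on $J$-cochains; in Theorem~\ref{CohomologyReal} the analogous representation was of $J$ itself, which is why the passage from $d'^{\bullet}$ to $d_r^{\bullet}$ was immediate. Second, the freeze-the-last-slot device you describe shifts degree by one (a $V$-valued $k$-form on $J$ becomes a $V'$-valued $(k{-}1)$-form), so after any such identification $d_c^2\circ d_c^1$ would correspond to a composite $d^1\circ d^0$ rather than to the $d^2\circ d^1$ of Theorem~\ref{cobord}. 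One can still extract the vanishing from the representation axioms at that level, but it is not literally an instance of Theorem~\ref{cobord}, and the bookkeeping you flag as ``the main obstacle'' is genuine. The paper sidesteps all of this by simply declaring the computation routine; your fallback matches that choice and is the safer route here.
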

\begin{proof}
It is straightforward.
\end{proof}

\section{Extensions of Hom-Jacobi-Jordan algebras}
 Let 
$ \left(V,\rho,\beta \right)  $
be a representation of a Hom-Jacobi-Jordan algebra $ \left(J,[\cdot,\cdot],\alpha \right)  $.
\begin{defn} 
 An extension of a Hom-Jacobi-Jordan algebra $ J $ by $ V $ is an exact sequence
	$$0\longrightarrow \left( V,\rho,\beta\right)  \stackrel{i}{\longrightarrow}\left(  M,[\cdot,\cdot]_{M},\alpha_{M}\right)  \stackrel{\pi}{\longrightarrow } \left( J,[\cdot,\cdot],\alpha\right)  \longrightarrow 0$$
such that $ \pi $ is an homomorphisme of Hom-Jacobi-Jordan algebra, $\alpha_{M}\circ i=i\circ \beta  $ and $ \alpha\circ \pi=\pi\circ  \alpha_{M}$.\\
We say that the extension is 
abelian if $i(V)$ is an abelian ideal of M and 
split if 		
  there exists a subalgebra $ S\subset M$
		complementary to $ \ker \pi $.	
	Two extensions 
	\begin{displaymath}
		\xymatrix { 0 \ar[r] &  \left( V,\rho,\beta\right)\ar[d]_{id_{   V}}\stackrel{i}{\longrightarrow} &  \left(  M,[\cdot,\cdot]_{M},\alpha_{M}\right) \ar[d]_{\Phi} \stackrel{\pi}{\longrightarrow}& \left( J,[\cdot,\cdot],\alpha\right)\ar[r] \ar[d]_{id_J}&0 \\
			0\ar[r] &  \left( V,\rho,\beta\right)\stackrel{i'} {\longrightarrow}    &\left(  M,[\cdot,\cdot]'_{M},\alpha_{M}\right)  \stackrel{\pi'}{\longrightarrow} &\left( J,[\cdot,\cdot],\alpha\right)  \ar[r]&0}
	\end{displaymath} 
	are equivalent if there exist an  isomorphism  of Hom-Jacobi-Jordan algebra $\Phi:\left(  M,[\cdot,\cdot]_{M},\alpha_{M}\right)\rightarrow \left(  M,[\cdot,\cdot]'_{M},\alpha_{M}\right)$, 
	such that $\Phi \circ \ i= i'$ and $\pi'\ \circ\ \Phi= \pi.$			                           
\end{defn}
\begin{lem}	
	Let two equivalent  extensions
	\begin{gather*}
		\xymatrix {\relax 0 \ar[r] &( V,\rho,\beta )   \ar[d]_{Id_{V}}\relax\ar[r]^-{i_{0}} \relax & (J\oplus V,[\cdot,\cdot]_{\theta},\alpha+\beta ) \ar[d]_{\Phi} \ar[r]^-{\pi_{0}}& (J,[\cdot,\cdot],\alpha)\ar[r] \ar[d]_{Id_{J}}&0 \\
			\relax	0\ar[r] & ( V,\rho,\beta )  \ar[r]^-{i_{0}}     & (J\oplus V,[\cdot,\cdot]_{\theta'},\alpha+\beta ) \ar[r]^-{\pi_{0}} &(J,[\cdot,\cdot],\alpha)\ar[r]&0}
	\end{gather*}
	where $[\cdot,\cdot]_{\theta}=[\cdot,\cdot]+\rho+\theta  $ and $[\cdot,\cdot]_{\theta'}=[\cdot,\cdot]+\rho+\theta'  $. Then, $ \theta'$ is a $2$-cocycle of $J$ on $V$ and it is equivalent to $\theta'$ (i.e $(\theta-\theta')\in B^2_{\alpha,\theta}(J,V) $).
\end{lem}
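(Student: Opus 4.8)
The plan is to determine the equivalence $\Phi$ explicitly from the two commuting conditions and then extract the $1$-coboundary directly from the multiplicativity of $\Phi$. Throughout I write elements of $J\oplus V$ as $x+u$ with $x\in J$ and $u\in V$, and recall that the brackets in the two rows are
\[
[x+u,\,y+v]_{\theta}=[x,y]+\rho(x)v+\rho(y)u+\theta(x,y),
\]
and the analogous formula with $\theta$ replaced by $\theta'$.

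First I would fix the shape of $\Phi$. Since $\pi_0\circ\Phi=\pi_0$, the $J$-component of $\Phi(x+u)$ must equal $x$; since $\Phi\circ i_0=i_0$, we have $\Phi(u)=u$ for $u\in V$. Linearity then forces
\[
\Phi(x+u)=x+f(x)+u\qquad(x\in J,\ u\in V)
\]
for a unique linear map $f\colon J\to V$. Because $\Phi$ is a homomorphism of Hom-Jacobi-Jordan algebras it commutes with the structure maps, $\Phi\circ(\alpha+\beta)=(\alpha+\beta)\circ\Phi$, and comparing $V$-components of this identity gives $f\circ\alpha=\beta\circ f$. Hence $f\in C^{1}_{\alpha,\beta}(J,V)$ and $d^{1}(f)$ is legitimately defined by \eqref{adjoint1}.

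Next I would impose multiplicativity. Writing out $\Phi([x+u,\,y+v]_{\theta})=[\Phi(x+u),\,\Phi(y+v)]_{\theta'}$ and comparing $J$-components gives the trivial identity $[x,y]=[x,y]$, while the $V$-components yield, after the cross terms $\rho(x)v$ and $\rho(y)u$ cancel from both sides,
\[
f([x,y])+\theta(x,y)=\rho(x)f(y)+\rho(y)f(x)+\theta'(x,y).
\]
Rearranging, $(\theta-\theta')(x,y)=-\big(f([x,y])-\rho(x)f(y)-\rho(y)f(x)\big)=-d^{1}(f)(x,y)$, so that $\theta-\theta'=-d^{1}(f)\in\mathrm{Im}\,d^{1}=B^{2}_{\alpha,\beta}(J,V)$, which is the asserted equivalence.

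Finally, that $\theta$ and $\theta'$ are genuine $2$-cocycles is immediate: since $[\cdot,\cdot]_{\theta}$ and $[\cdot,\cdot]_{\theta'}$ are Hom-Jacobi-Jordan brackets on $J\oplus V$, expanding the Hom-Jacobi identity \eqref{jacobie} and isolating the $V$-component is exactly the cocycle condition \eqref{cocycle}; hence both lie in $Z^{2}_{\alpha,\beta}(J,V)$. I do not expect any genuine obstacle here, as the argument is bookkeeping; the only point requiring care is verifying that every $\rho$-cross-term matches between the two sides so that precisely $d^{1}(f)$ survives. Notably the representation axioms \eqref{rep1}--\eqref{rep2} are not needed for this step, only linearity and the explicit bracket formulas, so once the form of $\Phi$ is fixed the computation is entirely routine.
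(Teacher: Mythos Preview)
Your proof is correct and follows essentially the same approach as the paper: determine the form of $\Phi$ from the two commutativity conditions of the diagram, then compare $V$-components of the multiplicativity identity to extract $\theta-\theta'=-d^{1}(f)$. The paper's version routes the computation through the general framework set up in Section~\ref{2group} (the maps $s,i,s',i'$ and equation~\eqref{f1}), while you carry out the same steps directly and self-contained; you also make explicit the verification $f\circ\alpha=\beta\circ f$ and the reason $\theta,\theta'$ are cocycles, points the paper leaves implicit here.
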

\begin{proof}
We use the notation from Section  \ref{2group}:
\begin{align*}
\Phi(x+v)&=\pi_{J}(x)+\pi_{V}(x)+\pi_{J}(v)+ \pi_{V}(v)\\
&=s(x)+i(x)+s'(v)+ i'(v).	             
\end{align*}
 Since, $ \Phi\circ i_0=i_{0}  $ and $ \pi'_{0} \circ \Phi = \pi_0$.
  We have,
$v=i_{0}(v)=\Phi\circ i_0(v)=\Phi(v)=s'(v)+ i'(v)$. Hence, $ s'(v)=0 $ since $i'(v), v\in V$. Thus, $i'(v)=v$.   Moreover,	
$x=\pi_0(x)=\pi'_{0}(\Phi(x))=\pi'_{0}(s(x)+i(x))=s(x)$  since $\ker (\pi'_{0})=Im (i'_{0})$ and $ i(x)\in V $. Therefore, by \eqref{f}, we have $ f(x)=i(x) $. Thus, by $i'(v)=v$ and  \eqref{f1}
  	\begin{gather*}
\theta(x,y)-\theta'\left(x,y) \right)
=-f\left( [x,y]\right)+\rho\left(x\right)f(y)+\rho\left(y \right)f(x).	                
  \end{gather*}
Hence, $\theta'-\theta=d^1f  $.
\end{proof} 
\begin{lem}
	If $\theta$ and $\theta'$ are two equivalent $2$ cocycle of $J$ on $V$. Then, 
	the extensions $(J\oplus V,[\cdot,\cdot]_{\theta},\alpha+\beta )$ and $(J\oplus V,[\cdot,\cdot]_{\theta'},\alpha+\beta )$ are equivalent.
\end{lem}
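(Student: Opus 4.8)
The plan is to build an explicit equivalence of extensions out of the $1$-cochain that witnesses the equivalence of the two cocycles. Since $\theta$ and $\theta'$ are equivalent, there is a $1$-cochain $f\in C^{1}_{\alpha,\beta}(J,V)$ with
\[
\theta'-\theta=d^1 f,\qquad d^1 f(x,y)=f([x,y])-\rho(x)f(y)-\rho(y)f(x).
\]
On $J\oplus V$ the two brackets are
\[
[x+u,y+v]_{\theta}=[x,y]+\rho(x)v+\rho(y)u+\theta(x,y),
\]
and similarly with $\theta'$, the twisting map being $\alpha+\beta$ on both sides. I would take the candidate equivalence to be the shear map
\[
\Phi\colon (J\oplus V,[\cdot,\cdot]_{\theta},\alpha+\beta)\longrightarrow (J\oplus V,[\cdot,\cdot]_{\theta'},\alpha+\beta),\qquad \Phi(x+v)=x+v+f(x).
\]

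First I would dispatch the structural requirements of the defining diagram. The map $\Phi$ is a linear bijection, with explicit inverse $x+w\mapsto x+w-f(x)$. As $f$ is linear we get $\Phi(0+v)=v$, hence $\Phi\circ i_0=i_0$, while $\pi_0(\Phi(x+v))=x=\pi_0(x+v)$ gives $\pi_0'\circ\Phi=\pi_0$. For compatibility with the twists I compare
\[
\Phi\big((\alpha+\beta)(x+v)\big)=\alpha(x)+\beta(v)+f(\alpha(x)),\qquad (\alpha+\beta)\big(\Phi(x+v)\big)=\alpha(x)+\beta(v)+\beta(f(x)),
\]
and these coincide exactly because of the $1$-cochain condition $f\circ\alpha=\beta\circ f$ encoded in $C^{1}_{\alpha,\beta}(J,V)$.

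The heart of the argument, and the one place the coboundary relation is actually consumed, is checking that $\Phi$ carries $[\cdot,\cdot]_{\theta}$ to $[\cdot,\cdot]_{\theta'}$. Expanding both $\Phi([x+u,y+v]_{\theta})$ and $[\Phi(x+u),\Phi(y+v)]_{\theta'}$, the $J$-components agree at once, and after cancelling the common summands $\rho(x)v+\rho(y)u$ the equality of $V$-components reduces to
\[
\theta'(x,y)-\theta(x,y)=f([x,y])-\rho(x)f(y)-\rho(y)f(x)=d^1 f(x,y),
\]
which holds by our choice of $f$. Combining this homomorphism property with the bijectivity above shows $\Phi$ is an isomorphism of Hom-Jacobi-Jordan algebras, and together with $\Phi\circ i_0=i_0$ and $\pi_0'\circ\Phi=\pi_0$ this is precisely the definition of equivalence of the two extensions. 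I do not expect any serious obstacle here: the whole verification is a direct computation, and the only care needed is tracking the symmetric terms and signs in the $V$-component so that the defining relation for $d^1 f$ comes out exactly; the fact that $[\cdot,\cdot]_{\theta'}$ genuinely defines a Hom-Jacobi-Jordan structure is already guaranteed since $\theta'$, being equivalent to a cocycle, is itself a $2$-cocycle.
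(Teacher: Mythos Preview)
Your proof is correct and follows essentially the same approach as the paper: define the shear map by the witnessing $1$-cochain, check it intertwines the twists using $f\circ\alpha=\beta\circ f$, and verify the bracket compatibility reduces to $\theta'-\theta=d^1f$. The only cosmetic difference is the sign convention---the paper writes $\Phi(x+v)=x-h(x)+v$ while you use $+f(x)$---and you are slightly more explicit in checking the diagram conditions $\Phi\circ i_0=i_0$ and $\pi_0\circ\Phi=\pi_0$.
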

\begin{proof}
	Let $\theta$ and $\theta'$ be two equivalent $2$ cocycle of $J$ on $V$. Then, there exist $h\in C_{\alpha,\beta}^1(J,V)$ satisfies $\theta'=\theta+d^1(h)$. Define the linear map $\Phi\colon J\oplus V\to J\oplus V$ by $\Phi(x+v)=x-h(x)+v$. Clearly, $\Phi$ is an isomorphism. Moreover, we have
	\begin{align*}
		&\left[\Phi(x+v),\Phi(y+w) \right]_{\theta'}\\
		&=d'\left[x-h(x)+v,y-h(y)+w \right]_{\theta'} \\
		&=[x,y] +\rho(x)w+\rho(y)v-	\rho(x)h(y)-
		\rho(y)h(x)+\theta'(x,y)\\
		&=[x,y] +\rho(x)w+\rho(y)v-	\rho(x)h(y)-
		\rho(y)h(x)+\theta(x,y)+d^1(h)(x,y)\\
	&	=[x,y] +\rho(x)w+\rho(y)v+\theta(x,y)-h\left([x,y] \right)\\
		&=\Phi\left( [x+v,y+w]_{\theta}\right) .
	\end{align*}
and $ \Phi\left((\alpha+\beta)(x+v) \right)=\alpha(x)-h(\alpha(x))+\beta(v) 
=\alpha(x)-\beta\left( h(x)\right) +\beta(v)=(\alpha+\beta)\left(\Phi (x+v) \right) $
	Hence, $\Phi\colon J\oplus V\to J\oplus V$ is an isomorphism of Hom-Jacobi-Jordan algebra. Therefore,                                  
	the following diagram 
	\begin{gather*}
		\xymatrix {\relax 0 \ar[r] &( V,\rho,\beta)   \ar[d]_{id_V}\relax\ar[r]^-{i_{0}} \relax & (J\oplus V,[\cdot,\cdot]_{\theta},\alpha+\beta ) \ar[d]_{\Phi} \ar[r]^-{\pi_{0}}& (J,[\cdot,\cdot],\alpha)\ar[r] \ar[d]_{id_J}&0 \\
			\relax	0\ar[r] & ( V,\rho,\beta ) \ar[r]^-{i_{0}}     & (J\oplus V,[\cdot,\cdot]_{\theta'},\alpha+\beta ) \ar[r]^-{\pi_{0}} &(J,[\cdot,\cdot],\alpha)\ar[r]&0}
	\end{gather*}
	gives, the extensions $(J\oplus V,[\cdot,\cdot]_{\theta},\alpha+\beta )$ and $(J\oplus V,[\cdot,\cdot]_{\theta'},\alpha )$ are equivalent.	
\end{proof}
\begin{lem}
	Let
	$$0\longrightarrow \left( V,\rho,\beta\right)  \stackrel{i}{\longrightarrow}\left(  M,[\cdot,\cdot]_{M},\alpha_{M}\right)  \stackrel{\pi}{\longrightarrow } \left( J,[\cdot,\cdot],\alpha\right)  \longrightarrow 0$$
be an abelian split extension of $J$ by $V$.	Then there exists an equivalent extension
  	$$0\longrightarrow \left( V,\rho,\beta\right)  \stackrel{i_{0}}{\longrightarrow}\left(  J\oplus V,[\cdot,\cdot]_{\theta},\alpha+\beta\right)  \stackrel{\pi_{0}}{\longrightarrow } \left( J,[\cdot,\cdot],\alpha\right)  \longrightarrow 0$$	         
\end{lem}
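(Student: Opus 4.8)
The plan is to use the split hypothesis to single out a distinguished complement of $i(V)$ in $M$ on which $\pi$ restricts to an isomorphism, and then to transport the whole structure of $M$ onto $J\oplus V$ along this splitting, recognizing the result as an extension of the standard form $[\cdot,\cdot]_{\theta}$. First I would unpack the splitting: by hypothesis there is a Hom-subalgebra $S\subset M$ with $M=S\oplus\ker\pi=S\oplus i(V)$ as vector spaces, and since $S$ is a subalgebra we have $\alpha_{M}(S)\subset S$. Thus $S$ plays exactly the role of the complementary $\alpha_{M}$-invariant subspace ``$J$'' required to run the construction of Section~\ref{2group}. The restriction $\pi|_{S}\colon S\to J$ is a bijective homomorphism of Hom-Jacobi-Jordan algebras, hence an isomorphism; let $\sigma=(\pi|_{S})^{-1}\colon J\to M$ be its inverse, so that $\sigma$ is a homomorphism with $\pi\circ\sigma=\mathrm{id}_{J}$, $\sigma\circ\alpha=\alpha_{M}\circ\sigma$, $[\sigma(x),\sigma(y)]_{M}=\sigma([x,y])$ and $\sigma(J)=S$.

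Next I would define $\Phi\colon M\to J\oplus V$ by declaring, for the unique decomposition $m=\sigma(x)+i(v)$, that $\Phi(m)=x+v$; this is a linear isomorphism with inverse $\Phi^{-1}(x+v)=\sigma(x)+i(v)$. To identify the transported bracket I expand
\[
[\sigma(x)+i(v),\sigma(y)+i(w)]_{M}=[\sigma(x),\sigma(y)]_{M}+[\sigma(x),i(w)]_{M}+[i(v),\sigma(y)]_{M}+[i(v),i(w)]_{M},
\]
and use the three structural inputs in turn: the diagonal term equals $\sigma([x,y])\in S$ because $\sigma$ is a homomorphism; the two cross terms lie in $i(V)$ and reproduce $i(\rho(x)w)$ and $i(\rho(y)v)$ because $i(V)$ is an ideal carrying the representation $\rho$ of Section~\ref{2group}; and the last term vanishes because the extension is abelian. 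Applying $\Phi$ gives $[x+v,y+w]_{\theta}=[x,y]+\rho(x)w+\rho(y)v$, which is precisely $[\cdot,\cdot]_{\theta}=[\cdot,\cdot]+\rho+\theta$ with $\theta=\pi_{V}\circ[\cdot,\cdot]_{M}\circ(\sigma\times\sigma)=0$ (in particular a $2$-cocycle). For the twisting maps, $\alpha_{M}\circ\sigma=\sigma\circ\alpha$ together with $\alpha_{M}\circ i=i\circ\beta$ yields $\alpha_{M}(\sigma(x)+i(v))=\sigma(\alpha(x))+i(\beta(v))$, so $\Phi$ intertwines $\alpha_{M}$ with $\alpha+\beta$; hence $\Phi$ is an isomorphism of Hom-Jacobi-Jordan algebras onto $(J\oplus V,[\cdot,\cdot]_{\theta},\alpha+\beta)$.

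Finally I would verify that $\Phi$ realizes an equivalence of extensions: $\Phi\circ i(v)=\Phi(\sigma(0)+i(v))=v=i_{0}(v)$ and $\pi_{0}\circ\Phi(\sigma(x)+i(v))=x=\pi(\sigma(x)+i(v))$, so both squares of the equivalence diagram commute. I expect the only genuine computation to be the bracket expansion of the middle paragraph, namely checking that the cross terms $[\sigma(x),i(w)]_{M}$ are governed exactly by $\rho$ and that the $S$-diagonal bracket contributes no $V$-component. The point where the hypotheses are truly used is twofold: the abelian hypothesis kills the $V$-$V$ term, while the subalgebra (rather than merely vector-space) complement furnishes the $\alpha_{M}$-invariant $S$ and the homomorphism section $\sigma$ — this is what forces the diagonal bracket into $S$ and makes $\sigma$ commute with the twisting maps, so that a split extension is equivalent to the semidirect product with $\theta=0$.
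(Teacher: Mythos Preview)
Your argument is correct and follows essentially the same route as the paper: use the subalgebra complement $S$ to obtain a section $\sigma=(\pi|_{S})^{-1}$, build the linear isomorphism $J\oplus V\cong M$ via $x+v\mapsto\sigma(x)+i(v)$, and transport the structure. You are in fact slightly more explicit than the paper, since you compute the transported bracket term by term and observe that the subalgebra condition on $S$ forces $[\sigma(x),\sigma(y)]_{M}\in S$, hence $\theta=0$; the paper simply names the pulled-back bracket $[\cdot,\cdot]_{\theta}$ and checks the Hom-Jacobi identity without isolating $\theta$, and it also omits the verification of the equivalence diagram that you carry out.
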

\begin{proof}
	Let 	$$0\longrightarrow \left( V,\rho,\beta\right)  \stackrel{i}{\longrightarrow}\left(  M,[\cdot,\cdot]_{M},\alpha_{M}\right)  \stackrel{\pi}{\longrightarrow } \left( J,[\cdot,\cdot],\alpha\right)  \longrightarrow 0$$
	be a split  
	extension of $ J $ by $ V $. Then there exist 
	a   subalgebra $ H\subset M$
	complementary to $ \ker \pi $. Since $Im\, i = \ker \pi$, we have    	
	$M= H\oplus i(V) $.   	   	
	The map  $\pi_{/H}:H\rightarrow J$ (resp $k:V\rightarrow i(V) $) defined by $\pi_{/H}(x)=\pi(x) $ (resp. $k( v)=i(v)$) is bijective, its inverse $ s  $ (resp. $l$).  Considering the map $\Phi\colon J\oplus V \to M$  defined  by
	$ \Phi(x+v)= s(x)+i (  v)$. Then,     $\Phi$ is an  isomorphism  and 
	$	M=s(J)\oplus i(V)$. Since 
 $s(J)$  and   $ i(V)$ are subalgebra of $M$, $ \alpha\circ \pi=\pi\circ  \alpha_{M}$,  $ \alpha_{M}\circ i=i\circ \beta  $ 
  we have $ \alpha_{M}=s\circ \alpha+i\circ \beta. $ Define a symmetric bilinear map $[\cdot,\cdot]_{\theta}\colon J\oplus V\to  J\oplus V $ by $[x+v,y+w]_{\theta}=\Phi^{-1}\left( [s(x)+i(v),s(y)+i(w)]_{M}\right)   $.
We show that $ \left(  J\oplus V,[\cdot,\cdot]_{\theta},\alpha+\beta\right)  $	is a Hom-Jordan-Jacobi algebra. We have
\begin{gather*}
0=\left[\alpha_{M}(s(x)+i(v)),[s(y)+i(w),s(z)+i(u)]_{M} \right]_{M} 
+\left[\alpha_{M}(s(y)+i(w)),[s(z)+i(u),s(x)+i(v)]_{M} \right]_{M}\\
+\left[\alpha_{M}(s(z)+i(u)),[s(x)+i(v),s(y)+i(w)] \right]_{M}\\
=\left[ s(\alpha(x))+i(\beta(v)),[s(y)+i(w),s(z)+i(u)]_{M}\right]_{M}
+\left[s(\alpha(y))+i(\beta(w)),[s(z)+i(u),s(x)+i(v)]_{M} \right]_{M}\\ 
+\left[s(\alpha(z))+i(u),[s(x)+i(v),s(y)+i(w)]_{M} \right]_{M}\\
=\left[ \Phi(\alpha(x)+\beta(v)),\Phi([y+w,z+u]_{\theta})\right]_{M}
+\left[\Phi(\alpha(y)+\beta(w)),\Phi([z+u,x+v]_{\theta}) \right]_{M}\\ 
+\left[\Phi(\alpha(z)+\beta(u)),\Phi([x+v,y+w]_{\theta}) \right]_{M}\\
=\Phi\Big(\left[ \alpha(x)+\beta(v),[y+w,z+u]_{\theta}\right]_{\theta}
+\left[\alpha(y)+\beta(w),[z+u,x+v]_{\theta} \right]_{\theta} 
+\left[\alpha(z)+\beta(u),[x+v,y+w]_{\theta} \right]_{\theta}            \Big).	            
\end{gather*}
Hence, $ \left(  J\oplus V,[\cdot,\cdot]_{\theta},\alpha+\beta\right)  $	is a Hom-Jacobi-Jordan algebra.
\end{proof}
According to the preceding lemmas, we get the following result.
\begin{thm}\label{bijection}
Let be $\left( V,\rho,\beta\right) $ be a representation of a  Hom-Jacobi-Jordan algebra $ \left(J,[\cdot,\cdot],\alpha \right)  $.	
 Then there is a bijective correspondence between the set 
 of equivalence classes of  split abelian extensions of $ J  $  by  $V$
 and the the  second  cohomology group of $ J $ on $V$.	                       
\end{thm}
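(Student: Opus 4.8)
The plan is to prove Theorem \ref{bijection} by assembling the three lemmas that precede it into a well-defined inverse pair of maps between equivalence classes of split abelian extensions and the cohomology group $H^2_{\alpha,\beta}(J,V)$. First I would define a map $\Psi$ from equivalence classes of split abelian extensions to $H^2_{\alpha,\beta}(J,V)$. Given a split abelian extension $0\to V\to M\to J\to 0$, the third lemma produces an equivalent extension of the standard form $(J\oplus V,[\cdot,\cdot]_\theta,\alpha+\beta)$, where $[\cdot,\cdot]_\theta=[\cdot,\cdot]+\rho+\theta$ and $\theta=\pi_V\circ[\cdot,\cdot]_M$ restricted appropriately. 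By the computation in Section \ref{2group} culminating in Eq.\eqref{cocycle}, this $\theta$ is a $2$-cocycle, so it determines a class $[\theta]\in H^2_{\alpha,\beta}(J,V)$. I would set $\Psi(\text{extension})=[\theta]$.

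Next I would verify that $\Psi$ is well defined on equivalence classes, which is exactly the content of the first lemma: if two standard extensions $(J\oplus V,[\cdot,\cdot]_\theta,\alpha+\beta)$ and $(J\oplus V,[\cdot,\cdot]_{\theta'},\alpha+\beta)$ are equivalent, then $\theta-\theta'=d^1 f\in B^2_{\alpha,\beta}(J,V)$, so $[\theta]=[\theta']$ in the quotient. One must also check that the auxiliary choices made in the third lemma (the splitting $s$ and the identification $i$) do not affect the class; different choices of complementary subalgebra differ by a $1$-cochain, again landing the difference in $B^2_{\alpha,\beta}(J,V)$, so $\Psi$ depends only on the equivalence class of the original extension.

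In the reverse direction, I would define a map $\Theta$ from $H^2_{\alpha,\beta}(J,V)$ to equivalence classes of split abelian extensions by sending a class $[\theta]$ to the class of the standard extension $(J\oplus V,[\cdot,\cdot]_\theta,\alpha+\beta)$. This is well defined precisely because of the second lemma: if $\theta$ and $\theta'$ are cohomologous (differ by $d^1(h)$), the explicit isomorphism $\Phi(x+v)=x-h(x)+v$ constructed there shows the two standard extensions are equivalent. I would then confirm that the standard construction indeed yields a split abelian extension: $i(V)$ is an abelian ideal because $\theta$ takes values in $V$ and $V$ carries the trivial bracket, and $J$ embedded as $J\oplus\{0\}$ provides the complementary subalgebra witnessing the split condition.

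Finally I would check that $\Psi$ and $\Theta$ are mutually inverse. The composite $\Psi\circ\Theta$ is the identity on $H^2_{\alpha,\beta}(J,V)$ because the $2$-cocycle extracted from the standard extension $[\cdot,\cdot]_\theta$ is $\theta$ itself, by the very definition $\theta=\pi_V\circ[\cdot,\cdot]_\theta$. The composite $\Theta\circ\Psi$ is the identity on equivalence classes because the third lemma guarantees every split abelian extension is equivalent to the standard extension built from its associated cocycle $\theta$, and equivalence of extensions is precisely the relation being quotiented by. The main obstacle I expect is the well-definedness of $\Psi$: one must carefully track that the splitting and the projections used in the third lemma do not introduce spurious dependence, and that the resulting cocycle is insensitive to the equivalence that identifies the given extension with its standard model; once the bookkeeping of Section \ref{2group} relating $s$, $i$, $i'$, and $f$ via Eq.\eqref{f1} and Eq.\eqref{f} is in hand, the coboundary term absorbs exactly the ambiguity, so this reduces to the identities already established in the three lemmas.
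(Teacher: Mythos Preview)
Your approach is exactly the paper's: the paper's entire proof is the single sentence ``According to the preceding lemmas, we get the following result,'' and you have correctly spelled out how the three lemmas assemble into mutually inverse maps $\Psi$ and $\Theta$.

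One point deserves correction. You write that ``$J$ embedded as $J\oplus\{0\}$ provides the complementary subalgebra witnessing the split condition.'' This is false in general: in $(J\oplus V,[\cdot,\cdot]_\theta)$ one has $[x,y]_\theta=[x,y]+\theta(x,y)$, and the component $\theta(x,y)\in V$ is nonzero whenever $\theta\neq 0$, so $J\oplus\{0\}$ is \emph{not} closed under the bracket. Under the paper's literal definition of ``split'' (existence of a complementary \emph{subalgebra}), a split abelian extension would in fact force $\theta\in B^2_{\alpha,\beta}(J,V)$, collapsing the correspondence. The theorem is correct as you have argued it provided ``split'' is read as the existence of a linear section $s:J\to M$ with $\alpha_M\circ s=s\circ\alpha$ (which is how the third lemma is actually used, and which matches the abstract's phrasing ``abelian extensions''). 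So your bijection stands; just replace the subalgebra claim with the observation that the obvious linear inclusion $J\hookrightarrow J\oplus V$ furnishes the required $\alpha$-compatible section.
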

\section{Classification of Low dimensional regular multiplicative solvable 
	Hom-Jordan-Jacobi algebras }
	Let $\left(M,[\cdot,\cdot]_{M},\alpha_{M} \right) $ be a  finite dimensional regular  multiplicative Hom-Jordan-Jacobi algebra (rm HJJ for short). As $ M $ is solvable, we have 
there exists an integer $k$ such that $ [D^{k}(M),D^{k}(M)]=D^{k+1}(M)=\{0\} $. Then $ D^{k}(M) $  is an abelian ideal of $M$. 
Let $ (v_1,\cdots,v_r) $ be a Jordan basis of $D^{k}(M)  $, 
and let $ (v_1,\cdots,v_r,u_1,\cdots, u_s )  $ be a Jordan basis of $M$. We set $ V=span\{v_1,\cdots,v_r   \} $, 
and $ J=span\{ u_1,\cdots, u_s     \} $. Then,
$ M=V\oplus J  $. Let $ \beta $  denote the restriction of $ \alpha_{M} $ to $ V $, $ \alpha_{J} $ denote the projection of $ \alpha_{M} $  to $ J  $, $ \theta $ denote the projection of $[\cdot,\cdot]_{M}  $ to $V$ and $ [\cdot,\cdot]_{J} $ denote the projection of $[\cdot,\cdot]_{M}  $ to $J$.
We use the above notations for the rest of paper.\\
Now, we rewrite some results appearing in the previous sections as follows.
\begin{thm}
	\label{structureMAY24}
	If $\alpha_{M}(J)\subset J  $,  we have
	\begin{enumerate}[(1)]
		\item $ \left( J, [\cdot,\cdot]_{J},\alpha \right)  $	 is a  Hom-Jacobi-Jordan algebra,
		\item the linear map $\rho\colon  J\to End(V)$, $x\longmapsto [x,\cdot]_{M}$ define a representation of $ J$ on $V$;
		\item $ \theta $ is a $2$-cocycle of the  Hom-Jacobi-Jordan algebra $  J $ 
		with coefficients in the representation $V$.
		\item Let $\theta'\colon J\times J \to V$ be a bilinear map. Define a symmetric bilinear map $[\cdot,\cdot]_{\theta'}$ on M by 	\[ [x+v,y+w]_{\theta'}=[x,y]_{J}+\rho(x)w +\rho(y)v+\theta'(x,y) .\] Then, $ \left(M,[\cdot,\cdot]_{\theta'},\alpha_{M} \right) $ becomes a Hom-Jacobi-Jordan algebra isomorphic to   
		$ \left(M,[\cdot,\cdot]_{M},\alpha_{M} \right) $	if and only if $\theta'-\theta\in B^2_{\alpha,\beta}(J,V)$.
	\end{enumerate}		               
\end{thm}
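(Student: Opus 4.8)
The plan is to prove the four assertions of Theorem \ref{structureMAY24} by recognizing that each one is a restatement, under the hypothesis $\alpha_M(J)\subset J$, of a construction already carried out in Section \ref{2group} and Section 3. The whole point is that the decomposition $M = V\oplus J$ with $V=D^k(M)$ an abelian ideal and $\alpha_M(J)\subset J$ puts us exactly in the setting in which $\rho$, $\beta$, $\theta$ and the coboundary operators were defined, so the work reduces to invoking those definitions and the identities already derived.

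For item (1), I would observe that the hypotheses are precisely those of the opening paragraph of Section \ref{2group}: $V$ is an abelian ideal, $J$ is a complementary subspace with $\alpha_M(J)\subset J$, so setting $\alpha=\alpha_M|_J$ and $[x,y]_J=\pi_J([x,y]_M)$ yields a Hom-Jacobi-Jordan algebra by the remark already made there (symmetry is inherited, and the Hom-Jacobi identity for $[\cdot,\cdot]_J$ follows by projecting the Hom-Jacobi identity of $M$ to $J$). For item (2), $\rho(x)v=[x,v]_M$ lands in $V$ because $V$ is an ideal, and properties \eqref{rep1}--\eqref{rep2} were verified in the discussion preceding Definition \ref{prilimenaryRep} using multiplicativity of $\alpha_M$ and the Hom-Jacobi identity; here one only needs to note that multiplicativity plus $\alpha_M(J)\subset J$ guarantees the relevant terms decompose correctly. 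For item (3), $\theta=\pi_V\circ[\cdot,\cdot]_M$ is symmetric and the cocycle condition \eqref{cocycle} is exactly the $V$-component of the Hom-Jacobi identity of $M$, which is the computation displayed before Definition \ref{pr2cocycle}; thus $\theta\in Z^2_{\alpha,\beta}(J,V)$.

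The substantive part is item (4), the isomorphism criterion, and this is where I expect the main obstacle. First I would check the easy direction: given $[\cdot,\cdot]_{\theta'}$ as defined, a direct expansion of the Hom-Jacobi identity splits into a $J$-part (controlled by item (1)) and a $V$-part $d^2(\theta')=0$, so $(M,[\cdot,\cdot]_{\theta'},\alpha_M)$ is a Hom-Jacobi-Jordan algebra iff $\theta'$ is a $2$-cocycle; combined with the cocycle property of $\theta$ from item (3), this means $\theta'-\theta\in Z^2_{\alpha,\beta}(J,V)$ is automatic in the relevant range. The real content is relating the \emph{isomorphism} of the two algebras to the \emph{coboundary} condition $\theta'-\theta\in B^2_{\alpha,\beta}(J,V)$. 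Here I would appeal to the two lemmas of Section 3: if $\theta'-\theta=d^1(h)$ then the explicit map $\Phi(x+v)=x-h(x)+v$ is an isomorphism (this is the second Lemma of Section 3 verbatim), giving one implication; conversely, an isomorphism $\Phi$ fixing $V$ and inducing the identity on $J$ produces, via the $1$-cochain $f$ constructed from \eqref{f} and the relation \eqref{f1}, the equality $\theta'-\theta=d^1 f$ (this is the first Lemma of Section 3).

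The delicate point to get right is that the isomorphism in item (4) is asserted between $(M,[\cdot,\cdot]_{\theta'},\alpha_M)$ and $(M,[\cdot,\cdot]_M,\alpha_M)$ rather than between two abstract cocycle-twisted brackets, so I would first reconcile $[\cdot,\cdot]_M$ with $[\cdot,\cdot]_\theta$ by noting that under the identification $M=J\oplus V$ the original bracket is exactly $[x+v,y+w]_M=[x,y]_J+\rho(x)w+\rho(y)v+\theta(x,y)$, i.e.\ $[\cdot,\cdot]_M=[\cdot,\cdot]_\theta$. After this identification the claim becomes ``$[\cdot,\cdot]_{\theta'}\cong[\cdot,\cdot]_\theta$ iff $\theta'-\theta\in B^2_{\alpha,\beta}(J,V)$'', which is precisely the equivalence of extensions established by the first two lemmas of Section 3, so the theorem follows. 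I would therefore present item (4) as a direct corollary of those lemmas once the identification $[\cdot,\cdot]_M=[\cdot,\cdot]_\theta$ is made explicit.
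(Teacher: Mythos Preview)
Your proposal is correct and follows essentially the same approach as the paper: items (1)--(3) are referred back to the constructions preceding Definitions \ref{prilimenaryRep} and \ref{pr2cocycle}, and item (4) is reduced to the two lemmas of Section 3 (the paper cites Theorem \ref{bijection}, which is the consequence of those lemmas). Your explicit identification $[\cdot,\cdot]_M=[\cdot,\cdot]_\theta$ and your remark that $\theta'-\theta\in B^2_{\alpha,\beta}(J,V)$ forces $\theta'$ to be a cocycle are exactly the observations the paper makes; you are simply more detailed in spelling out why the cited material applies.
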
	
\begin{proof}
	\begin{enumerate}[(1)]
		\item It is straightforward.
		\item See the preliminary of Definition    \ref{prilimenaryRep}.
		\item See the preliminary of Definition   \ref{pr2cocycle}.
		\item Remark that, if $ \theta'-\theta \in   B^2_{\alpha,\beta}(J,V)$ we have	$\theta'$ is a $2$ cocycle since $B^2_{\alpha,\beta}(J,V)$ is a vector space and $\theta.$ is a $2$ cocycle. For the rest see Theorem \ref{bijection}.
	\end{enumerate}				                
\end{proof}	
If $ \alpha_{M}(J)\subset J $, 
we apply the  the previous Theorem, to get an algorithm to construct all solvable regular multiplicative  HJJ
algebras of dimension $n$  given  algebras of dimension $ s<n $  in
following way:	
\begin{enumerate}[(i)]	
	\item Determine the linear maps  $\rho\colon J\to End(V)$  satisfied \eqref{rep1} and \eqref{rep2}.	
	\item Determine the $2$-cocycles, the $2$-coboundaries and compute the quotient
	$ H^2_{\alpha,\beta}(J,V) $.
	\item There exists a $\theta \in H^2_{\alpha,\beta}(J,V)$ such that $\left(M,[\cdot,\cdot]_{M},\alpha_{M} \right) $ is equivalent to  $\left(M,[\cdot,\cdot]_{\theta},\alpha_{M} \right) $. 
\end{enumerate}		
If $ \alpha_{M}(J)\nsubseteq J $. We use the following result.	
\begin{prop}\label{JordanBlok}
	Let $\left(M,[\cdot,\cdot]_M,\alpha_M  \right)$ be  a finite dimensional solvable multiplicative regular 
	Hom-Jordan-Jacobi algebra such that $\alpha_M  $ can be represented  by the matrix
	\[ 
	\begin{pmatrix}
		a & 1 &0 \cdots & 0 \\
		0 & a & 1\cdots & 0 \\
		\vdots & \vdots & \ddots & 1\\
		0 & 0 & \cdots & a
	\end{pmatrix} \] in a basis $ (v,u_1,\cdots,u_n) $.
	Then, $a=1$, $ [v,u_i ]_M=0  $ and $[u_i,u_j ]_M=x_{i,j}v  $.\\
	Conversely, if the above  conditions are satisfied,  we have $\left(M,[\cdot,\cdot]_M,\alpha_M  \right)$ is  a  finite dimensional solvable multiplicative regular 
	Hom-Jordan-Jacobi algebra.
\end{prop}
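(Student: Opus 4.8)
The plan is to translate the single Jordan block into an explicit Jordan chain and then exploit multiplicativity \eqref{multiplicative} through the nilpotent part of $\alpha_M$. With respect to $(v,u_1,\dots,u_n)$ the matrix tells us that $\alpha_M(v)=av$, $\alpha_M(u_1)=v+au_1$ and $\alpha_M(u_i)=u_{i-1}+au_i$ for $2\le i\le n$; setting $u_0:=v$ and $N:=\alpha_M-a\,\mathrm{Id}$ we obtain a nilpotent operator with $N(v)=0$ and $N(u_i)=u_{i-1}$, whose kernel is $\mathrm{span}(v)$. Substituting $\alpha_M=a\,\mathrm{Id}+N$ into $\alpha_M[x,y]_M=[\alpha_M x,\alpha_M y]_M$ and cancelling yields the master relation
\begin{equation*}
N[x,y]_M=a(a-1)[x,y]_M+a\bigl([Nx,y]_M+[x,Ny]_M\bigr)+[Nx,Ny]_M,
\end{equation*}
which is the engine of the whole argument.

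First I would prove $a=1$ by contraposition. Suppose $a\ne1$; since $\alpha_M$ is regular we also have $a\ne0$, so the scalar $a(a-1)$ is nonzero and hence is not an eigenvalue of the nilpotent operator $N$, i.e. $N-a(a-1)\,\mathrm{Id}$ is invertible. Feeding $x=y=v$ into the master relation gives $(N-a(a-1)\,\mathrm{Id})[v,v]_M=0$, hence $[v,v]_M=0$, and an easy induction on $j$ (using $[v,u_{j-1}]_M=0$) gives $[v,u_j]_M=0$ for all $j$. A second induction on $i+j$ then shows that $(N-a(a-1)\,\mathrm{Id})[u_i,u_j]_M$ equals an $a$-combination of strictly lower brackets, all of which vanish, so $[u_i,u_j]_M=0$ as well. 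Thus $a\ne1$ would force $[\cdot,\cdot]_M\equiv0$, i.e. $M$ abelian; but in the setting of this section $v$ spans the last nonzero term $D^{k}(M)$ of the derived series, so $[\cdot,\cdot]_M\neq0$, a contradiction. Hence $a=1$.

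With $a=1$ the master relation simplifies to $N[x,y]_M=[Nx,y]_M+[x,Ny]_M+[Nx,Ny]_M$, and here $N$ no longer helps by invertibility (the factor $a(a-1)$ has become $0$), which is exactly where the real work lies. Reading off the $N$-image of each bracket gives $N[v,u_i]_M=[v,u_{i-1}]_M$ and $N[u_i,u_j]_M=[u_{i-1},u_j]_M+[u_i,u_{j-1}]_M+[u_{i-1},u_{j-1}]_M$; in particular $[v,v]_M\in\ker N=\mathrm{span}(v)$. I would then kill the unwanted components one chain-level at a time by inserting suitable triples into the Hom-Jacobi identity \eqref{jacobie}: the triple $(v,v,v)$ forces $[v,v]_M=0$, triples such as $(v,u_i,u_i)$ together with auxiliary ones like $(u_{i-1},u_{i-1},u_i)$ force the coefficients occurring in $[v,u_i]_M$ to vanish, giving $[v,u_i]_M=0$ for all $i$, and finally the triples $(u_i,u_j,u_k)$ remove the remaining $u$-components of the $[u_i,u_j]_M$, leaving $[u_i,u_j]_M\in\ker N=\mathrm{span}(v)$, i.e. $[u_i,u_j]_M=x_{i,j}v$. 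Organising this double induction along the Jordan chain, and bookkeeping which Jacobi triple removes which coefficient, is the main obstacle; the low-dimensional cases $n=1,2$ already exhibit the mechanism and the quadratic nature of the resulting constraints, whose only common solution makes the spurious components vanish.

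For the converse I would simply check the four axioms for the bracket $[v,u_i]_M=0$, $[u_i,u_j]_M=x_{i,j}v$ with $\alpha_M=\mathrm{Id}+N$ the unipotent block. Symmetry is built in. The Hom-Jacobi identity \eqref{jacobie} holds automatically, since every inner bracket lies in $\mathrm{span}(v)$ and $[\,\cdot\,,v]_M=0$, so each of the three terms $[\alpha_M(\cdot),[\cdot,\cdot]_M]_M$ vanishes. Solvability is immediate because $D^1(M)\subseteq\mathrm{span}(v)$ and hence $D^2(M)=0$, and regularity holds because $\alpha_M$ is unipotent, hence invertible, and is a morphism by construction. The only genuine condition is multiplicativity: applying $N(x_{i,j}v)=0$ to the $a=1$ master relation shows that $\alpha_M[u_i,u_j]_M=[\alpha_M u_i,\alpha_M u_j]_M$ is equivalent to the linear recursion $x_{i-1,j}+x_{i,j-1}+x_{i-1,j-1}=0$ (with $x_{0,\cdot}=x_{\cdot,0}=0$) on the structure constants, so the converse holds precisely for those $x_{i,j}$, which are understood to be part of the data.
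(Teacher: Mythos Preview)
Your route is genuinely different from the paper's. The paper's argument is a two-line structural one: since $D^{1}(M)$ is an ideal it is $\alpha_{M}$-invariant, and for a single Jordan block the only proper nonzero invariant subspace the paper is willing to use is $\operatorname{span}\{v\}$, whence $D^{1}(M)=\operatorname{span}\{v\}$ and ``the rest is straightforward''. You instead work analytically through the multiplicativity relation via the nilpotent part $N=\alpha_{M}-a\,\mathrm{Id}$, split off the case $a\neq 1$ by an invertibility argument, and then attack $a=1$ with the Hom--Jacobi identity. Your $a\neq 1$ argument is clean and does not use \eqref{jacobie} at all, which is a nice observation; and your remark on the converse---that multiplicativity forces the recursion $x_{i-1,j}+x_{i,j-1}+x_{i-1,j-1}=0$ on the structure constants---is sharper than what the paper states, and correct.

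The gap is your Step~4. You yourself call it ``the main obstacle'' and then only gesture at it: you say the triple $(v,v,v)$ kills $[v,v]_M$ (true), and that ``triples such as $(v,u_i,u_i)$ together with auxiliary ones'' kill the rest. But already at the first nontrivial level the constraints are quadratic and must be combined: from $N[v,u_1]_M=0$ one gets $[v,u_1]_M=c_1v$, and only the triple $(v,u_1,u_1)$ together with the computation of $[u_1,u_1]_M$ (which has a $u_1$-component $2c_1u_1$) gives $4c_1^{2}=0$. One level up, $(v,u_2,u_2)$ yields $c_2(2c_2+d_1)=0$ and $(u_1,u_1,u_2)$ yields $d_1(3c_2+2d_1)=0$, and one must solve this system to force $c_2=0$. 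You have not shown that this mechanism closes at every level, nor that the triples $(u_i,u_j,u_k)$ suffice to push all brackets $[u_i,u_j]_M$ into $\ker N$. Without that bookkeeping the $a=1$ case is an outline, not a proof; if you want to keep this approach you need to write the simultaneous induction explicitly. Alternatively, you could try to recover the paper's shortcut by proving directly that $D^{1}(M)=\operatorname{span}\{v\}$---but note that a Jordan block has a whole flag of invariant subspaces, so the paper's sentence ``the only invariant subspace is $V$'' is not literally true, and that step also needs justification.
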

\begin{proof}
	It is clear that the only invariant subspace by $\alpha_{M} $ is $V=span\{v\}$. 
	Consequently, $D^1(M)=V  $
	since  $D^1(M)$ is an ideal of $M$.  
	The rest of proof is a straightforward.	
\end{proof}			 
\begin{prop}
	Each one-dimensional Hom-Jordan-Jacobi algebra  is abelian.	    
\end{prop}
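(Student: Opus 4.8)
The plan is to reduce everything to a single scalar relation coming from the Hom-Jacobi identity. First I would fix a basis vector $e$ of the one-dimensional space $J$. Since the bracket is symmetric and bilinear it is completely determined by one scalar $\lambda$ through $[e,e]=\lambda e$, and the twist map by one scalar $\mu$ through $\alpha(e)=\mu e$. Abelianness of $J$ is then literally the assertion $\lambda=0$, so the whole proposition becomes a statement about these two scalars.

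Next I would exploit the fact that, on a one-dimensional space, the only instance of the Hom-Jacobi identity \eqref{jacobie} that is not forced to vanish by symmetry and linearity is the diagonal case $x=y=z=e$. Substituting there, each of the three cyclic terms equals $[\alpha(e),[e,e]]=[\mu e,\lambda e]=\mu\lambda^2 e$, so the identity collapses to $3\mu\lambda^2 e=0$, i.e. $\mu\lambda^2=0$ (the factor $3$ being harmless over the ground field).

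The final step is to eliminate the factor $\mu$. Here I would invoke the standing hypothesis of this section that the algebra is regular, so that $\alpha$ is an automorphism and in particular $\mu\neq 0$; then $\mu\lambda^2=0$ forces $\lambda^2=0$, hence $\lambda=0$ and the bracket vanishes identically, which is exactly the statement that $J$ is abelian.

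I expect the only real obstacle to be the degenerate twist $\mu=0$. When $\alpha=0$ the Hom-Jacobi identity holds vacuously and places no constraint on $\lambda$, so a nonzero bracket together with $\alpha=0$ would be a genuine non-abelian one-dimensional Hom-Jacobi-Jordan algebra. This is precisely why the invertibility of $\alpha$ is the essential ingredient: multiplicativity by itself only supplies the extra relation $\mu\lambda(1-\mu)=0$, which still does not rule out $\mu=0$. I would therefore make the regularity assumption explicit at the outset, after which the computation above closes the argument immediately.
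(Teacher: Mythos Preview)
Your argument is correct and is exactly the straightforward computation the paper has in mind; the paper's own proof is simply the sentence ``The proof is a straightforward.'' Your identification of regularity as the essential hypothesis is apt, since the proposition lives in the section on regular multiplicative Hom-Jacobi-Jordan algebras and, as you note, genuinely fails without invertibility of $\alpha$.
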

\begin{proof}	
	The proof is a straightforward.	
\end{proof}			
Below we present the description of such Hom-Jordan-Jacobi algebras when dimension is equal to $2$.
\begin{prop}
	Every $2$-dimensional regular  multiplicative Hom-Jordan-Jacobi algebra is isomorphic to one of the following pairwise non-isomorphic Hom-Jordan-Jacobi algebra.
	\begin{enumerate}[$\bullet$]
		\item $J_{1,1}^{1}$: $  [e_1,e_1]=e_2 $, $ \alpha(e_1)=ae_1,\, \alpha(e_2)=a^2e_2 $,
		\item $J_{1,1}^{2}$: $  [e_2,e_2]=e_1 $, $ \alpha(e_1)=e_1,\, \alpha(e_2)=e_1+e_2 $.	
	\end{enumerate}	                      
\end{prop}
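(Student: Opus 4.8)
The plan is to exploit the extension/cohomology machinery of Theorem~\ref{structureMAY24} together with the Jordan-block analysis of Proposition~\ref{JordanBlok}, after first normalising the twisting map. Let $(M,[\cdot,\cdot]_M,\alpha_M)$ be a $2$-dimensional regular multiplicative Hom-Jordan-Jacobi algebra. If the bracket vanishes the algebra is abelian and is set aside, so I assume $[\cdot,\cdot]_M\neq 0$ and hence $D^1(M)=[M,M]\neq 0$. Because $M$ is solvable, $D^1(M)$ cannot equal $M$ (otherwise $D^k(M)=M$ for all $k$), and a one-dimensional derived ideal is forced to be abelian (again by solvability), so $V:=D^1(M)$ is a one-dimensional abelian ideal. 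Since $\alpha_M$ is a multiplicative automorphism it sends brackets to brackets, whence $\alpha_M(V)=V$; write $\beta=\alpha_M|_V$, a nonzero scalar, and let $a$ be the scalar induced on the line $M/V$. In a basis adapted to $V$ the map $\alpha_M$ then has matrix $\left(\begin{smallmatrix}\beta&\gamma\\0&a\end{smallmatrix}\right)$, and the argument splits according to whether this matrix is diagonalisable.

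\emph{Case A} ($\alpha_M$ diagonalisable). Here there is a complement $J$ with $\alpha_M(J)\subseteq J$, so Theorem~\ref{structureMAY24} applies. I first show $\rho$ must vanish: writing $J=\mathrm{span}\{u\}$ with $\alpha(u)=au$ and $V=\mathrm{span}\{v\}$, one has $\rho(u)v=[u,v]_M\in D^1(M)=V$, say $\rho(u)v=\mu v$, while $[u,u]_M\in D^1(M)=V$ forces $[\cdot,\cdot]_J\equiv 0$. Feeding this into \eqref{rep2} gives $a\mu^2=0$, hence $\mu=0$ because $a\neq 0$. With $\rho=0$ the cocycle condition \eqref{cocycle} is automatic and the coboundary operator \eqref{adjoint1} is identically zero, so $B^2_{\alpha,\beta}(J,V)=0$ and $H^2_{\alpha,\beta}(J,V)\cong Z^2_{\alpha,\beta}(J,V)\cong V\cong\R$. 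The $2$-cochain condition $\beta\circ\theta=\theta\circ\alpha$ forces $\beta=a^2$ on any nonzero class, and after rescaling $v$ so that $\theta(u,u)=v$, the choice $e_1=u$, $e_2=v$ yields exactly $J_{1,1}^1$.

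\emph{Case B} ($\alpha_M$ a single Jordan block, i.e. $\beta=a$ and $\gamma\neq 0$). After a change of basis $\alpha_M$ takes the form $\left(\begin{smallmatrix}a&1\\0&a\end{smallmatrix}\right)$, so Proposition~\ref{JordanBlok} applies with $n=1$ and gives $a=1$, $[v,u_1]_M=0$ and $[u_1,u_1]_M=x_{1,1}v$. Non-triviality of the bracket forces $x_{1,1}\neq 0$, and rescaling $u_1$ normalises $x_{1,1}=1$; relabelling $e_1=v$, $e_2=u_1$ produces $J_{1,1}^2$.

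Finally I check the list is irredundant. Any isomorphism of Hom-Jordan-Jacobi algebras intertwines the twisting maps and hence conjugates their matrices; since the $\alpha$ of $J_{1,1}^1$ is diagonalisable while that of $J_{1,1}^2$ is not, the two families cannot overlap, and comparing different values of the parameter $a$ inside $J_{1,1}^1$ reduces to matching the eigenvalue multiset $\{a,a^2\}$ together with the one-dimensional bracket, which is routine. I expect the main obstacle to be the bookkeeping of \emph{Case A}: correctly deducing $\rho=0$ and $\beta=a^2$ from \eqref{rep2} and the $2$-cochain condition, and confirming that the single nonzero cohomology class is actually realised by a multiplicative bracket, rather than any deep structural difficulty.
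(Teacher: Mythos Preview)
Your approach is essentially the same as the paper's: you both reduce $\alpha_M$ to Jordan form, then in the diagonalisable case apply the extension machinery of Theorem~\ref{structureMAY24} (the paper calls this ``the algorithm with $s=\dim(J)=1$''), derive $\rho=0$ from \eqref{rep2}, read off the eigenvalue constraint from the $2$-cochain condition, and rescale to reach $J_{1,1}^1$; in the non-diagonalisable case you both invoke Proposition~\ref{JordanBlok} and rescale to reach $J_{1,1}^2$. Your write-up is in fact a bit more careful than the paper's---you justify why $D^1(M)$ is a one-dimensional abelian ideal, make explicit that regularity gives $a\neq 0$ in the step $a\mu^2=0$, and supply the irredundancy argument (diagonalisability of $\alpha$ distinguishes the two families), which the paper omits.
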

\begin{proof}
	Let $\left(M,[\cdot,\cdot]_M,\alpha_M  \right)$ be  a finite dimensional solvable multiplicative regular 
	Hom-Jordan-Jacobi algebra.	
	We consider two classes
	of morphism which are given by Jordan form, namely they are represented
	by the matrices
	$\begin{pmatrix}a&0\\0&b \end{pmatrix}$, 
	$\begin{pmatrix}a&1\\0&a \end{pmatrix}$.\\
	If $\alpha_{M}=\begin{pmatrix}
		a&0\\
		0&	b
	\end{pmatrix}$:	Then, $J$ is invariant under $\alpha_M  $. Hence we use the algorithm with $s=\dim (J)=1$. Let $ \rho(u_1)=xv_1 $. By $\rho([u_1,u_1])\beta(v_{1})=-2\rho(\alpha(u_1))\rho(u_{1})v_{1}$ we obtain $ x^2=0 $. Hence, $\rho=0$. Therefore, 
	\[H^2_{\alpha,\beta}(J,V)=C^2_{\alpha,\beta}(J,V) . \] Let  $\theta(u_1,u_1)=xv_1  $ be  a $2$-cohain of $J$ on $v$. Thus, $ \theta(\alpha(u_1),\alpha(u_1))=\beta\left( \theta(u_1,u_1)\right)  $.
	So, $ b^2=a $. We conclude that \[[\lambda_{1} v_1+\mu_{1} u_1,\lambda_{2} v_1+\mu_{2} u_1]_{\theta}=\mu_{1}\mu_{2}\theta(u_1,u_1)=\mu_{1}\mu_{2}xv_1. \]
	Making the change of basis: $e_1=\frac{1}{x}v_1  $, $ e_{2}=\frac{1}{x}u_1 $.Then, we obtain $ J_{1,1}^{1} $. \\
	If $\alpha_{M}=\begin{pmatrix}
		a&1\\
		0&	a
	\end{pmatrix}$: by Proposition \ref{JordanBlok}, we have  $ a=1 $, $ [v_{1},v_{1}]=[v_{1},u_1]=0 $, $ [u_1,u_1]=x_{1,1}v_{1} $. 
	Making the change of basis: $e_1=\frac{1}{x_{1,1}}v_1  $, $ e_{2}=\frac{1}{x_{1,1}}u_1 $. Then, we obtain $ J_{1,1}^{2} $. \\
\end{proof}
\begin{lem}\label{lemma2DIAGONAL}
	Let $\left(M,[\cdot,\cdot]_M,\alpha_M  \right)$ be  an abelian extension of $ J_{1,1}^{1}$ by $V$ such that $\dim(V)=1$. Then, $ \left(M,[\cdot,\cdot]_M,\alpha_M  \right) $ is isomorphic to 	  $\left(M,[\cdot,\cdot]_0,\alpha_M\right) $ where  $ [u_1,u_1]_0=u_2 $ (the other brackets are zero).               
\end{lem}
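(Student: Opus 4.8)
The plan is to run the algorithm of Theorem~\ref{structureMAY24} in this low–dimensional situation and to show that the relevant second cohomology group vanishes, so that the only isomorphism class is the split one. In the diagonalizable situation of this lemma I may choose the complement $J$ to be $\alpha_M$–invariant, so that $M=J\oplus V$ with $\alpha_M(J)\subset J$ and the algorithm applies. Write $J=J_{1,1}^{1}=\spa\{u_1,u_2\}$ with $[u_1,u_1]_{J}=u_2$, $\alpha(u_1)=au_1$, $\alpha(u_2)=a^2u_2$ (with $a\neq0$), and $V=\spa\{v_1\}$. Since $M$ is regular, $\alpha_M$ is invertible; hence $\beta=\alpha_M|_{V}$ is a nonzero scalar $b$, and a representation $\rho\colon J\to\mathrm{End}(V)\cong\K$ is encoded by the two scalars $\rho(u_1)=p$ and $\rho(u_2)=q$.

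The first and decisive step is to determine the admissible $\rho$. Substituting $x=y=u_2$ into \eqref{rep2} gives $0=\rho([u_2,u_2])\beta=-2\rho(\alpha(u_2))\rho(u_2)=-2a^2q^2$, so $q=0$; then $x=y=u_1$ yields $qb=-2ap^2$, hence $p=0$ because $a\neq0$. Therefore $\rho=0$ is the only representation of $J_{1,1}^{1}$ on a line. This collapses both the cocycle condition \eqref{cocycle} and the coboundary operator \eqref{adjoint1} to their $\rho$–free parts, which is what makes the whole computation tractable.

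Next I would compute $Z^2_{\alpha,\beta}(J,V)$ and $B^2_{\alpha,\beta}(J,V)$. Writing $\theta(u_1,u_1)=c_1$, $\theta(u_1,u_2)=c_2$, $\theta(u_2,u_2)=c_3$ in the $v_1$–coordinate, the $2$-cochain condition $\beta\circ\theta=\theta\circ\alpha$ becomes $(b-a^2)c_1=(b-a^3)c_2=(b-a^4)c_3=0$, while evaluating \eqref{cocycle} (with $\rho=0$) on $(u_1,u_1,u_1)$ and on $(u_1,u_1,u_2)$ gives $3ac_2=0$ and $a^2c_3=0$, hence $c_2=c_3=0$. For coboundaries, $\rho=0$ reduces \eqref{adjoint1} to $d^1f(x,y)=f([x,y])$, so $d^1f(u_1,u_1)=f(u_2)$ and $d^1f$ vanishes on the other pairs. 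If $b\neq a^2$, the cochain condition already forces $c_1=0$, so $Z^2_{\alpha,\beta}(J,V)=0$; if $b=a^2$, then $c_1$ is unconstrained but so is $f(u_2)$ (its $1$-cochain constraint $(a^2-b)f(u_2)=0$ is vacuous), whence $Z^2_{\alpha,\beta}(J,V)=B^2_{\alpha,\beta}(J,V)\cong\K$. In both cases $H^2_{\alpha,\beta}(J,V)=0$.

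Finally, since $H^2_{\alpha,\beta}(J,V)=0$ the cocycle $\theta$ attached to the given extension satisfies $\theta-0\in B^2_{\alpha,\beta}(J,V)$, so Theorem~\ref{structureMAY24}(4) (equivalently Theorem~\ref{bijection}) yields that $(M,[\cdot,\cdot]_M,\alpha_M)$ is isomorphic to $(M,[\cdot,\cdot]_0,\alpha_M)$ with $[\cdot,\cdot]_0=[\cdot,\cdot]_{J}+\rho+0=[\cdot,\cdot]_{J}$; that is, $[u_1,u_1]_0=u_2$ and all remaining brackets vanish, as claimed. The only point requiring care is the resonant subcase $b=a^2$: there $Z^2_{\alpha,\beta}(J,V)$ and $B^2_{\alpha,\beta}(J,V)$ are both one–dimensional, and one must verify that they literally coincide rather than merely that cocycles are scarce; the vanishing of $\rho$ established in the first step is exactly what forces this equality.
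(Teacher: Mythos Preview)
Your proof is correct and follows the same approach as the paper's: show $\rho=0$ via \eqref{rep2}, compute $Z^2_{\alpha,\beta}$ and $B^2_{\alpha,\beta}$ directly, deduce $H^2_{\alpha,\beta}(J,V)=0$, and invoke Theorem~\ref{structureMAY24}. Your treatment is in fact more careful than the paper's, since you track the $2$-cochain compatibility $(b-a^2)c_1=0$ and the $1$-cochain constraint $(a^2-b)f(u_2)=0$ explicitly, thereby handling the case $b\neq a^2$ (where $Z^2=B^2=0$) separately from the resonant case $b=a^2$ (where $Z^2=B^2\cong\K$); the paper simply asserts that every cocycle is a coboundary without isolating these cases.
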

\begin{proof}
	Let $ J= J_{1,1}^{1}$.	Then, we have $ [u_1,u_1]_{J}=u_{2} $, $ \alpha(u_1)=au_1 $ and $ \alpha(u_2)=a^2u_2 $. Let $ \beta (v_1)=bv_1 $,$ \rho(u_1)(v_1)=x_1v_1 $ and $ \rho(u_2)(v_1) =x_2 v_1$.
	By \eqref{rep2}, we have $ \rho([u_1,u_1]_{J})=-2\rho(\alpha(u_1))\rho(u_1) $ and $ \rho([u_2,u_2]_{J})=-2\rho(\alpha(u_2))\rho(u_2) $.
	Hence, $ \rho=0 $.
	The task is now to find the space $Z_{\alpha,\beta}^{2}(J,V)$. Let $\theta$ a $2$-cocycle of $J$ on $V$. Then $\theta$ satisfies the Eq.
	\eqref{adjoint2}. Hence, $ \theta(\alpha(u_1),[u_1,u_1]) =0$ and $ \theta(\alpha(u_2),[u_1,u_1]) =0$. Thus, $ \theta(u_1,u_2) =0$ and $ \theta(u_2,u_2) =0 $.\\
	Next, we will find the space $B_{\alpha,\beta}^{2}(J,V)$. Let $g$ be a $2$ cobord of $J$ on $V$. By \eqref{adjoint1}, we obtain $ g(u_1,u_1)=f([u_1,u_1]) =f(u_2)$, $ g(u_1,u_2)=0$ and  $ g(u_2,u_2)=0$.
	Hence, any $2$-cocycle is a $2$-cobord. Then $H^2(J,V)=\{0\}$ and   by Theorem \ref{structureMAY24}, $ [u_1,u_1]_{0}=
	[u_1,u_1]_{J}+\theta(u_1,u_1)=u_2 $ and  
	$ \left(M,[\cdot,\cdot]_{M},\alpha_{M} \right)  $ is isomorphic  to $\left(M,[\cdot,\cdot]_0,\alpha_M\right) $.
\end{proof}
\begin{lem}\label{lemma2Jorfanblock}
	Let $\left(M,[\cdot,\cdot]_M,\alpha_M  \right)$ be  an abelian extension of $ J_{1,1}^{2}$ by $V$ such that $\dim(V)=1$. Then, 
	$ \alpha_M(u_1)=u_1 $, $ \alpha_M(u_2)=u_1+u_2 $, $ \alpha_M(v_1)=bv_1 $
	$ \left(M,[\cdot,\cdot]_M,\alpha_M  \right) $ is isomorphic to 	  $\left(M,[\cdot,\cdot]_0,\alpha_M\right) $ where  $ [u_2,u_2]_0=u_1 $ (the other brackets are zero).               
\end{lem}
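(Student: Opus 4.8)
The plan is to follow verbatim the strategy of Lemma \ref{lemma2DIAGONAL}: identify the representation $(V,\rho,\beta)$, pin down the twisting map $\alpha_M$, compute the spaces $Z^2_{\alpha,\beta}(J,V)$ and $B^2_{\alpha,\beta}(J,V)$, and then invoke Theorem \ref{structureMAY24} to transport the bracket $[\cdot,\cdot]_M$ to the normal form $[\cdot,\cdot]_0$. First I would fix the data: write $J=J_{1,1}^{2}$ with $[u_2,u_2]_{J}=u_1$ and all other brackets zero, $\alpha(u_1)=u_1$, $\alpha(u_2)=u_1+u_2$, and set $V=\mathrm{span}\{v_1\}$ with $\beta(v_1)=bv_1$. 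Since $\dim V=1$ the representation is scalar, say $\rho(u_1)v_1=x_1v_1$ and $\rho(u_2)v_1=x_2v_1$. Evaluating \eqref{rep2} on the pair $(u_1,u_1)$ gives $\rho(u_1)^2=0$, whence $x_1=0$; evaluating it on $(u_2,u_2)$ and using $[u_2,u_2]_{J}=u_1$ and $\alpha(u_2)=u_1+u_2$ then forces $bx_1=-2x_2(x_1+x_2)$, so that $x_2=0$ as well. Thus $\rho=0$, exactly as in the diagonal case.

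Next I would determine $\alpha_M$. Because $\pi\circ\alpha_M=\alpha\circ\pi$, a choice of splitting yields $\alpha_M(u_1)=u_1+c_1v_1$, $\alpha_M(u_2)=u_1+u_2+c_2v_1$ and $\alpha_M(v_1)=bv_1$. Adjusting the complement $J$ inside $M$ (replacing $u_i$ by $u_i+a_iv_1$) removes $c_2$, and imposing multiplicativity \eqref{multiplicative} on $[u_2,u_2]_M=u_1+\theta(u_2,u_2)$ together with $\rho=0$ expresses $c_1$ through the components of $\theta$ and, once these are shown to vanish, gives $c_1=0$. This is the step that justifies the stated form $\alpha_M(u_1)=u_1$, $\alpha_M(u_2)=u_1+u_2$, $\alpha_M(v_1)=bv_1$.

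I would then compute the cohomology. Writing $\theta(u_i,u_j)=t_{ij}v_1$, the $2$-cochain condition $\beta\circ\theta=\theta\circ\alpha$ and the cocycle identity \eqref{cocycle} (with $\rho=0$) evaluated on the triples $(u_1,u_2,u_2)$ and $(u_2,u_2,u_2)$ force $t_{11}=t_{12}=0$, leaving only $t_{22}$ subject to $t_{22}(b-1)=0$. For the coboundaries, a $1$-cochain $f\in C^1_{\alpha,\beta}(J,V)$ with $f(u_i)=p_iv_1$ must obey $f\circ\alpha=\beta\circ f$, which gives $p_1(b-1)=0$ and $p_1=(b-1)p_2$, and \eqref{adjoint1} yields $d^1f(u_2,u_2)=f(u_1)=p_1v_1$. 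For $b\neq 1$ these relations force $Z^2_{\alpha,\beta}(J,V)=B^2_{\alpha,\beta}(J,V)=0$, so $H^2_{\alpha,\beta}(J,V)=0$; hence $\theta$ is a coboundary and by Theorem \ref{structureMAY24} the algebra $(M,[\cdot,\cdot]_M,\alpha_M)$ is isomorphic to $(M,[\cdot,\cdot]_0,\alpha_M)$ with $[u_2,u_2]_0=u_1$ and all other brackets zero.

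The main obstacle is the borderline eigenvalue $b=1$, where $\beta$ shares its eigenvalue with the Jordan block of $\alpha$ on $J$. There the constraint $t_{22}(b-1)=0$ becomes vacuous and the coboundary relation degenerates to $p_1=0$, so the cohomology computation no longer forces $t_{22}=0$ and the extension retains the parameter $[u_2,u_2]_M=u_1+t_{22}v_1$. I would handle this case directly, seeking a change of basis commuting with $\alpha_M$ that normalizes $t_{22}$ to $0$. The delicate point is the rigidity of such maps: any linear isomorphism commuting with $\alpha_M$ must preserve $\mathrm{Im}(\alpha_M-\mathrm{id})=\mathrm{span}\{u_1\}$, so $\psi(u_1)\in\mathrm{span}\{u_1\}$, and reconciling this constraint with the requirement $\psi([u_2,u_2]_M)=[\psi(u_2),\psi(u_2)]_0$ is exactly the crux that must be settled to complete the case $b=1$.
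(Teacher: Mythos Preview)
Your approach is exactly what the paper intends: its entire proof reads ``This proof is similar to the previous Lemma,'' i.e.\ Lemma~\ref{lemma2DIAGONAL}, and you have carried out precisely that programme---show $\rho=0$ from \eqref{rep2}, compute $Z^2_{\alpha,\beta}(J,V)$ and $B^2_{\alpha,\beta}(J,V)$, and appeal to Theorem~\ref{structureMAY24}. Your computations of the cocycle constraints ($t_{11}=t_{12}=0$ from the triples $(u_1,u_2,u_2)$ and $(u_2,u_2,u_2)$) and of the $1$-cochain condition ($p_1=(b-1)p_2$, $(b-1)p_1=0$) are correct and fill in exactly the details the paper omits.

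Your diagnosis of the case $b=1$ is also correct and in fact exposes a point the paper's one-line proof does not address. When $b=1$ one has $Z^2_{\alpha,\beta}(J,V)$ one-dimensional (spanned by $\theta(u_2,u_2)=v_1$) while $B^2_{\alpha,\beta}(J,V)=0$, since the $1$-cochain condition forces $p_1=0$ and hence $d^1f(u_2,u_2)=f(u_1)=0$. Your rigidity argument is sound: any isomorphism commuting with $\alpha_M$ must send $u_1$ into $\operatorname{Im}(\alpha_M-\mathrm{id})=\operatorname{span}\{u_1\}$ and send $v_1$ into $\ker(\alpha_M-\mathrm{id})=\operatorname{span}\{u_1,v_1\}$; chasing the bracket relation $[\psi(u_2),\psi(u_2)]_0=\psi(u_1+t_{22}v_1)$ then forces $t_{22}=0$. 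So for $b=1$ the conclusion of the lemma, as stated with the \emph{same} $\alpha_M$ on both sides, genuinely fails, and you are right that this case would need a separate treatment (or an explicit hypothesis $b\neq1$) rather than a change of basis. This is a gap in the paper's sketch that your more careful analysis has uncovered, not a defect in your argument.
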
	           
\begin{proof}
	This proof is similar to the previous Lemma.	           
\end{proof}
\begin{lem}\label{abelian}
	Let	$\left( J,[\cdot,\cdot]_{J},\alpha_{J}\right) $ be an  abelian HJJ of dimension $2$ such that $ \alpha_{J}(u_1)=au_1 $, $ \alpha_{J}(u_2)=bu_2 $. 
	Let $\left(M,[\cdot,\cdot]_M,\alpha_M  \right)$ be  an abelian extension of $ J$ by $V$ such that $\dim(V)=1$ and $\beta(v_1)=dv_1$. Then, 
	\[ [u_{1},u_{1}]_{M}=x_{1}v_{1},\,[u_{1},u_{2}]_{M}=x_{2}v_{1},\,[u_{2},u_{2}]_{M}=x_{3}v_{1} \] 
	where
	\[\left\{
	\begin{array}{ll}
		x_{2}&=0 \text{ if  }  b=-a, \,  d=a^2\\
		x_{2}&=0,\, x_{3}=0  \text{ if  } b^2\neq a^2,\,  d=a^2.
	\end{array}\right.\]             
\end{lem}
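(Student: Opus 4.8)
\textbf{Proof strategy for Lemma~\ref{abelian}.}
The plan is to exploit the general classification machinery assembled in the preceding sections. Since $\left(M,[\cdot,\cdot]_M,\alpha_M\right)$ is an abelian extension of the abelian algebra $J$ by the one-dimensional $V=\mathrm{span}\{v_1\}$, the bracket on $M$ is completely determined by a representation $\rho\colon J\to End(V)$ and a $2$-cocycle $\theta\in Z^2_{\alpha,\beta}(J,V)$, with $[u_i,u_j]_M=\theta(u_i,u_j)$ because $[\cdot,\cdot]_J=0$ on the abelian $J$. Writing $\theta(u_i,u_j)=x_{?}v_1$, the goal reduces to extracting the constraints on the scalars $x_1,x_2,x_3$ imposed by the representation and cocycle conditions \eqref{rep1}, \eqref{rep2}, and \eqref{cocycle} under the numerical hypotheses on $a,b,d$.

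First I would pin down $\rho$. Setting $\rho(u_1)v_1=p_1v_1$ and $\rho(u_2)v_1=p_2v_1$, the relation \eqref{rep2} applied to the pairs $(u_1,u_1)$, $(u_2,u_2)$ gives $\rho([u_i,u_i]_J)\circ\beta=-2\rho(\alpha(u_i))\rho(u_i)$; since $J$ is abelian, $[u_i,u_i]_J=0$, so the left side vanishes and one obtains $p_i^2=0$, forcing $\rho=0$ exactly as in Lemmas~\ref{lemma2DIAGONAL} and~\ref{lemma2Jorfanblock}. With $\rho=0$, the extension bracket is purely $\theta$, so $[u_i,u_j]_M=\theta(u_i,u_j)$ and the task is simply to determine which $\theta$ are admissible.

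Next I would impose the two remaining conditions on $\theta$. The $2$-cochain compatibility $\beta\circ\theta=\theta\circ\alpha$ gives, for the pair $(u_1,u_2)$, the equation $d\,\theta(u_1,u_2)=\theta(\alpha u_1,\alpha u_2)=ab\,\theta(u_1,u_2)$, so $\theta(u_1,u_2)=0$ whenever $d\neq ab$; applied to $(u_2,u_2)$ it gives $d\,\theta(u_2,u_2)=b^2\theta(u_2,u_2)$, forcing $\theta(u_2,u_2)=0$ when $d\neq b^2$; and similarly $\theta(u_1,u_1)=0$ when $d\neq a^2$. The $2$-cocycle condition \eqref{cocycle}, since $[\cdot,\cdot]_J=0$ and $\rho=0$, collapses to the statement that every term $\theta(\alpha(\cdot),[\cdot,\cdot]_J)$ vanishes, hence is automatically satisfied and imposes no further relation. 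Feeding in the hypotheses then yields the stated table: in the case $b=-a$, $d=a^2$ one has $d=a^2\neq -a^2=ab$ (assuming $a\neq 0$, which holds as $\alpha_M$ is invertible), so $x_2=\theta(u_1,u_2)=0$; in the case $b^2\neq a^2$, $d=a^2$ one has both $d\neq ab$ (since $a^2\neq ab$ would follow from $a\neq b$, and $b^2\neq a^2$ excludes $b=\pm a$) and $d=a^2\neq b^2$, giving $x_2=0$ and $x_3=0$.

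The main obstacle is bookkeeping the numerical exclusions cleanly: one must verify that the hypotheses $b=-a$ or $b^2\neq a^2$ genuinely separate the eigenvalues $a^2,ab,b^2$ of the induced action on the symmetric square so that the corresponding components of $\theta$ are forced to zero, while being careful that $a\neq 0$ (guaranteed by regularity of $\alpha_M$) so that $ab\neq a^2$ is equivalent to $b\neq a$. No component survives unless its symmetric-square eigenvalue coincides with $d$, and the table simply records which coincidences the two hypotheses permit; once $\rho=0$ is established the remainder is the routine eigenvalue comparison just described.
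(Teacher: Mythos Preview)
Your proposal is correct and follows essentially the same approach as the paper's proof: establish $\rho=0$ from \eqref{rep2} applied to $(u_i,u_i)$ with $[\cdot,\cdot]_J=0$, observe that the cocycle condition \eqref{cocycle} then becomes vacuous so that $Z^2_{\alpha,\beta}(J,V)=C^2_{\alpha,\beta}(J,V)$, and read off the constraints on $x_1,x_2,x_3$ from the cochain compatibility $\beta\circ\theta=\theta\circ\alpha$, which yields the system $x_1(d-a^2)=0$, $x_2(d-ab)=0$, $x_3(d-b^2)=0$. Your explicit remark that regularity of $\alpha_M$ guarantees $a\neq0$ (needed to conclude $a^2\neq ab$ from $b\neq a$) is a useful clarification that the paper leaves implicit.
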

\begin{proof}
	By straightforward computations, we have $\rho=0  $, $Z^2_{\alpha,\beta}(J,V)=C^2_{\alpha,\beta}(J,V)  $ and  $ H^2_{\alpha,\beta}(J,V)=C^2_{\alpha,\beta}(J,V) $. Let $\theta$ be a $2$-cocycle defined by $\theta(u_1,u_1)=x_{1}v_{1}  $, $\theta(u_1,u_2)=x_{2}v_{1}  $, $\theta(u_2,u_2)=x_{3}v_{1}  $. Then, The condition $ \theta\circ \alpha=\beta \circ \alpha $ is equivalent to the system
	\[\left\{
	\begin{array}{ll}
		x_{1}(d-a^2)&=0\\
		x_{2}(d-ab)&=0\\
		x_{3}(d-b^2)&=0 
	\end{array}\right.\]
	Therefore,  if $d=a^2$ and $b=- a$, we have $\theta(u_1,u_2)=0  $.\\
	If $d=a^2$ and  $a^2\neq b^2$, we have $\theta(u_2,u_2)=0  $, $\theta(u_1,u_2)=0  $.\\ 
	If $d\neq a^2$ , we have $\theta(u_1,u_1)=0  $, $\theta(u_1,u_2)=0  $. 
\end{proof}
\begin{lem}\label{JordanAbelian}
	Let	$\left( J,[\cdot,\cdot]_{J},\alpha_{J}\right) $ be an  abelian HJJ of dimension $2$ such that $ \alpha_{J}(u_1)=au_1 $, $ \alpha_{J}(u_2)=u_1+au_2 $ . 
	Let $\left(M,[\cdot,\cdot]_M,\alpha_M  \right)$ be  an abelian extension of $ J$ by $V$ such that $\dim(V)=1$ and $\beta(v_1)=a^2v_1$. Then, 
	\[ [u_{1},u_{1}]_{M}=0,\,[u_{1},u_{2}]_{M}=0,\,[u_{2},u_{2}]_{M}=x_{3}v_{1} \] 		               
\end{lem}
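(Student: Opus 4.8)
The setup mirrors Lemma \ref{abelian}, so the strategy is to reuse that machinery verbatim, only adjusting the computations to reflect that $\alpha_J$ is now a single Jordan block rather than a diagonal map. First I would record the structural data: since $J$ is abelian we have $[\cdot,\cdot]_J=0$, hence $\rho(x)v=[x,v]_M$ and the cocycle condition \eqref{cocycle} collapses (every bracket $[u_i,u_j]_J$ entering it vanishes), which by the same straightforward computation as in Lemma \ref{abelian} forces $\rho=0$, so that $Z^2_{\alpha,\beta}(J,V)=C^2_{\alpha,\beta}(J,V)=H^2_{\alpha,\beta}(J,V)$. Thus every symmetric bilinear $\theta\colon J\times J\to V$ compatible with $\alpha$ and $\beta$ gives an extension, and I only need to solve the $2$-cochain condition $\beta\circ\theta=\theta\circ\alpha$.

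\textbf{Key computation.} Write $\theta(u_1,u_1)=x_1v_1$, $\theta(u_1,u_2)=x_2v_1$, $\theta(u_2,u_2)=x_3v_1$, and impose $\beta(\theta(u_i,u_j))=\theta(\alpha(u_i),\alpha(u_j))$ with $\alpha(u_1)=au_1$, $\alpha(u_2)=u_1+au_2$, $\beta(v_1)=a^2v_1$. The three equations read
\begin{align*}
\beta(\theta(u_1,u_1))&=\theta(au_1,au_1)=a^2x_1v_1,\\
\beta(\theta(u_1,u_2))&=\theta(au_1,u_1+au_2)=(ax_1+a^2x_2)v_1,\\
\beta(\theta(u_2,u_2))&=\theta(u_1+au_2,u_1+au_2)=(x_1+2ax_2+a^2x_3)v_1,
\end{align*}
where I have used symmetry and bilinearity of $\theta$. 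Since $\beta(v_1)=a^2v_1$, the left sides are $a^2x_1v_1$, $a^2x_2v_1$, $a^2x_3v_1$. Comparing coefficients gives the system $a^2x_1=a^2x_1$ (automatic), $a^2x_2=ax_1+a^2x_2$, i.e. $ax_1=0$, and $a^2x_3=x_1+2ax_2+a^2x_3$, i.e. $x_1+2ax_2=0$. As $\alpha_J$ is an automorphism (the algebra is regular), $a\neq0$; hence $ax_1=0$ forces $x_1=0$, and then $x_1+2ax_2=0$ forces $x_2=0$. No constraint is placed on $x_3$, which is exactly the claimed conclusion $[u_1,u_1]_M=0$, $[u_1,u_2]_M=0$, $[u_2,u_2]_M=x_3v_1$.

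\textbf{Main obstacle and wrap-up.} There is no deep obstacle here; the only care needed is bookkeeping in the Jordan-block computation of $\theta(\alpha(u_i),\alpha(u_j))$, where the off-diagonal $1$ in $\alpha_J$ produces the cross terms $ax_1$ and $x_1+2ax_2$ that are absent in the diagonal case of Lemma \ref{abelian}; it is precisely these cross terms that cascade to kill both $x_1$ and $x_2$. I would also note explicitly that regularity guarantees $a\neq0$, since this is what makes the implications $ax_1=0\Rightarrow x_1=0$ and $x_1+2ax_2=0\Rightarrow x_2=0$ valid. Finally, since $\theta$ is an arbitrary element of $H^2_{\alpha,\beta}(J,V)$ and the bracket on $M$ is $[x+v,y+w]_M=[x,y]_J+\rho(x)w+\rho(y)v+\theta(x,y)=\theta(x,y)$ (using $[\cdot,\cdot]_J=0$ and $\rho=0$), the extension $M$ has exactly the stated brackets, completing the proof.
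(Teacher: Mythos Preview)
Your proof is correct and follows essentially the same approach as the paper: reduce to the $2$-cochain condition $\beta\circ\theta=\theta\circ\alpha$ (since $\rho=0$ and $[\cdot,\cdot]_J=0$), expand using the Jordan block form of $\alpha_J$, and solve the resulting linear system using $a\neq 0$ from regularity. The paper's proof is only slightly more general in that it writes the system for arbitrary $\beta(v_1)=cv_1$ before specializing to $c=a^2$, but the content is identical.
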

\begin{proof}
	Let $\beta(v_1)=cv_1$. Similar to the previous Lemma we obtain
	\[\left\{
	\begin{array}{ll}
		x_{1}(c-a^2)&=0\\	
		ax_{1}+x_{2}(c-a^2)&=0\\
		x_1+2ax_2+	x_{3}(a^2-c)&=0 
	\end{array}\right.\] 
	Hence, if $c=a^2$, we obtain $x_1=x_2=0$ and if $c\neq a^2$ we obtain $x_1=x_2=x_3=0  $ 
\end{proof}
\begin{lem} \label{Jvariant}
	Let $\left(M,[\cdot,\cdot]_M,\alpha_M  \right)$ be  a  $3$-dimensional solvable multiplicative regular 
	Hom-Jordan-Jacobi algebra such that   $\alpha_{M}(J)\nsubseteq J$. Then, $\dim(V)=1$ and the structures of $ M $ are given  by the matrix 
	$\begin{pmatrix}
		1&1&0\\
		0&	1&1\\
		0&	0&1
	\end{pmatrix}$
	with respect to the basis $ (v_1,u_1,u_2) $ and $ [v_{1},u_1]=[v_{1},u_2]=0 $, $ [u_1,u_1]=x_{1,1}v_{1} $, $ [u_1,u_2]=x_{1,2}v_{1} $ and
	$ [u_2,u_2]=x_{2,2}v_{1} $.
\end{lem}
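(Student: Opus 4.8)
The plan is to determine the Jordan form of the regular automorphism $\alpha_M$ on the three-dimensional space $M$ and then to reduce everything to Proposition \ref{JordanBlok}. Throughout I use the standing facts of this section: $V=D^k(M)$ is a nonzero $\alpha_M$-invariant abelian ideal (it is the last nonzero term of the derived series, and ideals are $\alpha_M$-stable), $M=V\oplus J$, and the hypothesis $\alpha_M(J)\not\subseteq J$ means that the complement $J$ cannot be taken $\alpha_M$-invariant, i.e.\ $V$ admits no $\alpha_M$-invariant complement. Since $\dim M=3$ and $V\neq 0$, the Jordan form of $\alpha_M$ has one of three shapes: semisimple, one $2\times 2$ block together with a $1\times 1$ block, or a single $3\times 3$ block. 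The aim is to show that the hypothesis forces the last shape and then to quote Proposition \ref{JordanBlok}.

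First I would dispose of the semisimple case. If $\alpha_M$ is semisimple, then $M$ splits into $\alpha_M$-eigenlines (or, over $\R$, into real-irreducible invariant pieces), and the invariant subspace $V$ is a sum of some of them; the sum of the remaining pieces is then an $\alpha_M$-invariant complement to $V$, which may be taken as $J$. This gives $\alpha_M(J)\subseteq J$, contradicting the hypothesis. Hence $\alpha_M$ is not semisimple, so its Jordan form contains a nilpotent block of size at least $2$, leaving only the $2\times 2\oplus 1\times 1$ shape and the single $3\times 3$ block.

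Next I would treat the single $3\times 3$ block, which is the desired conclusion. Here $\alpha_M$ is conjugate to the stated matrix on the chain $(v_1,u_1,u_2)$ with a common eigenvalue, so Proposition \ref{JordanBlok} applies verbatim: it forces the eigenvalue to be $1$ and gives $[v_1,u_i]_M=0$ and $[u_i,u_j]_M=x_{i,j}v_1$, which are exactly the listed brackets. A short check using \eqref{jacobie} with the triple $(v_1,v_1,u_1)$ (or directly that $V$ is abelian) yields $[v_1,v_1]_M=0$, so every bracket lands in $\mathrm{span}\{v_1\}$. Running the derived series then gives $D^1(M)=[M,M]\subseteq\mathrm{span}\{v_1\}$ and $D^2(M)=[\,\mathrm{span}\{v_1\},\mathrm{span}\{v_1\}\,]=0$; thus the last nonzero derived term is $V=\mathrm{span}\{v_1\}$, so $\dim V=1$, completing this case.

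The crux, and the step I expect to be the main obstacle, is to rule out the mixed shape of one $2\times 2$ block plus one $1\times 1$ block. The tool I would use is multiplicativity in eigenvector form: if $x,y$ are (generalized) eigenvectors of eigenvalues $\lambda,\mu$ and $[x,y]_M\neq 0$, then $\alpha_M[x,y]_M=[\alpha_M x,\alpha_M y]_M$ forces $[x,y]_M$ into the generalized eigenspace of eigenvalue $\lambda\mu$; coupled with the requirement that $V=D^k(M)$ be the abelian ideal containing $[M,M]$ and with the Hom--Jacobi identity \eqref{jacobie}, these eigenvalue constraints sharply limit the admissible brackets. The goal is to show that the $1\times 1$ block eigenvector $r$ satisfies $[\,\cdot\,,r]_M=0$ and that its line (or the $2\times 2$ generalized eigenspace) splits off as a direct-summand ideal, so that such an $M$ decomposes as a direct sum of strictly lower-dimensional regular multiplicative Hom--Jacobi--Jordan algebras already described by the two-dimensional classification; consequently the mixed shape yields no indecomposable three-dimensional algebra, and the only genuinely three-dimensional case with $\alpha_M(J)\not\subseteq J$ is the single $3\times 3$ block handled above. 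Carrying out this splitting cleanly, in particular controlling the borderline subcases in which an eigenvalue equals $1$ and cross-brackets between the two blocks are a priori permitted by the eigenvalue condition, is the delicate part of the argument.
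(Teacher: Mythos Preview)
Your overall strategy matches the paper's: reduce to the single $3\times 3$ Jordan block and then invoke Proposition~\ref{JordanBlok}. The paper's own proof is a one-line assertion of this reduction with no justification; you go further and explicitly analyse the three possible Jordan shapes of $\alpha_M$, and your treatment of the semisimple case and of the single $3\times 3$ block is correct.

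The genuine gap is your handling of the $2\times 2\oplus 1\times 1$ shape. Your plan is to show that in this case $M$ decomposes as a direct sum of lower-dimensional pieces and hence ``yields no indecomposable three-dimensional algebra.'' But the lemma carries no indecomposability hypothesis, so even a complete decomposition argument would not establish the stated conclusion that $\alpha_M$ must be a single $3\times 3$ block. In fact the conclusion cannot be established: take the algebra $J_{2,1}^5$ from the paper's own list, with $[e_2,e_2]=e_1$ and $\alpha_M(e_1)=e_1$, $\alpha_M(e_2)=e_1+e_2$, $\alpha_M(e_3)=be_3$. Its last nonzero derived ideal is $V=D^1(M)=\langle e_1\rangle$, the eigenvector line of the $2\times 2$ block, and one checks directly that $V$ admits no $\alpha_M$-invariant complement; hence $\alpha_M(J)\not\subseteq J$ for every Jordan-basis complement $J$ as in the section's setup, yet $\alpha_M$ is of type $2\oplus 1$, not a single block. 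So neither your sketch nor the paper's bare assertion can exclude this shape, and the lemma as literally stated is false. (In the subsequent classification the paper places $J_{2,1}^5$ under Case~1, effectively taking $V=\langle e_3\rangle$ rather than $D^k(M)$, so the final list is unaffected, but the lemma itself is not repaired by this.)
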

\begin{proof}
	Since $\alpha_{M}(J)\nsubseteq J$, the matrix of  $\alpha_{M}$ has the following form: $\begin{pmatrix}
		a&1&0\\
		0&	a&1\\
		0&	0&a
	\end{pmatrix}$. Therefore, from Lemma\ref{JordanBlok}  we finish the proof.  	     
\end{proof}

\begin{thm}
	Every $3$-dimensional regular  multiplicative Hom-Jacobi-Jordan algebra is isomorphic to one of the following pairwise non-isomorphic Hom-Jordan-Jacobi algebra.
	\begin{enumerate}[$\bullet$]
		\item $ J^1_{2,1} $: $[e_1,e_1]=e_2  $, 
		$ \alpha(e_1)=ae_1 $,  $ \alpha(e_2)=a^2e_2 $,  $ \alpha(e_3)=be_3 $,
		\item $ J^2_{2,1} $: $[e_1,e_1]=e_3  $,  $[e_2,e_2]=e_3  $,
		$ \alpha(e_1)=ae_1 $,  $ \alpha(e_2)=-ae_2 $,  $ \alpha(e_3)=a^2e_3 $,
		\item $ J^3_{2,1} $: $[e_1,e_1]=e_3  $,  
		$ \alpha(e_1)=ae_1 $,  $ \alpha(e_2)=-ae_2 $,  $ \alpha(e_3)=a^2e_3 $,
		\item $ J^4_{2,1} $: $[e_1,e_1]=e_3  $,  
		$ \alpha(e_1)=ae_1 $,  $ \alpha(e_2)=be_2 $ $(b^2\neq a^2)$,  $ \alpha(e_3)=a^2e_3 $,
		\item $ J^5_{2,1} $: $[e_2,e_2]=e_1  $,
		$ \alpha(e_1)=e_1 $,  $ \alpha(e_2)=e_1+e_2 $   $\alpha(e_3)=be_3 $,
		\item $ J^6_{2,1} $: $[e_2,e_2]=e_3  $,
		$ \alpha(e_1)=ae_1 $,  $ \alpha(e_2)=ce_1+ae_2 $,  $ \alpha(e_3)=a^2e_3 $,
		\item $ J^7_{1,2} $: $[e_1,e_1]=e_2  $,
		$ \alpha(e_1)=ae_1 $,  $ \alpha(e_2)=a^2e_2 $,  $ \alpha(e_3)=ce_3 $ ($ c\neq a^2 $),
		\item $ J^8_{1,2} $: $[e_1,e_1]=xe_1+ye_2  $,
		$ \alpha(e_1)=ae_1 $,  $ \alpha(e_2)=a^2e_2 $,  $ \alpha(e_3)=a^2e_3 $ ,
		\item $ J^9_{1,2} $: $[e_1,e_1]=e_2  $,
		$ \alpha(e_1)=ae_1 $,  $ \alpha(e_2)=a^2e_2 $,  $ \alpha(e_3)=ce_2+a^2e_3 $ ,
		\item $ J^{10}_{1,2} $: $[e_1,e_3]=e_2  $,
		$ \alpha(e_1)=ae_1 $,  $ \alpha(e_2)=a^2e_2 $,  $ \alpha(e_3)=a^2e_3 $
		\item $ J^{11}_{1,2} $: $[e_1,e_3]=e_2  $,
		$ \alpha(e_1)=ae_1 $,  $ \alpha(e_2)=a^2e_2 $,  $ \alpha(e_3)=ce_2+a^2e_3 $,
		\item $ J^{12}_{2,1} $: $[e_1,e_1]=e_3  $, $[e_1,e_2]=e_3  $, $[e_2,e_2]=xe_3  $,
		$ \alpha(e_1)=e_1+e_3 $,  $ \alpha(e_2)=ce_1+e_2 $,  $ \alpha(e_3)=e_3 $, 
		\item $ J^{13}_{2,1} $: $[e_1,e_1]=e_3  $, $[e_2,e_2]=e_3  $,
		$ \alpha(e_1)=e_1+e_3 $,  $ \alpha(e_2)=ce_1+e_2 $,  $ \alpha(e_3)=e_3 $, 
		\item $ J^{14}_{2,1} $: $[e_1,e_1]=e_3  $, 
		$ \alpha(e_1)=e_1+e_3 $,  $ \alpha(e_2)=e_1+e_2 $,  $ \alpha(e_3)=e_3 $, 
		\item $ J^{15}_{2,1} $: $[e_1,e_2]=e_3  $, $[e_2,e_2]=e_3  $, 
		$ \alpha(e_1)=e_1+e_3 $,  $ \alpha(e_2)=ce_1+e_2 $,  $ \alpha(e_3)=e_3 $,
		\item $ J^{16}_{2,1} $:  $[e_2,e_2]=e_3  $, 
		$ \alpha(e_1)=e_1+e_3 $,  $ \alpha(e_2)=ce_1+e_2 $,  $ \alpha(e_3)=e_3 $,
		\item $ J^{17}_{2,1} $:  $[e_1,e_2]=e_3  $, 
		$ \alpha(e_1)=e_1+e_3 $,  $ \alpha(e_2)=ce_1+e_2 $,  $ \alpha(e_3)=e_3 $,
	\end{enumerate}		                   
\end{thm}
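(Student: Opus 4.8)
The plan is to exploit the solvable structure theory of Section \ref{2group} and the extension-cohomology dictionary of Theorem \ref{bijection} to reduce the classification to the finite list of cocycle computations already carried out in the preceding lemmas. First I would fix a $3$-dimensional regular multiplicative Hom-Jordan-Jacobi algebra $(M,[\cdot,\cdot]_M,\alpha_M)$ and, by solvability, choose $k$ with $D^{k+1}(M)=\{0\}$ and set $V=D^k(M)$, an abelian ideal; extending a Jordan basis of $V$ to one of $M$ writes $M=J\oplus V$ exactly as in the setup preceding Theorem \ref{structureMAY24}. Discarding the trivially abelian algebra, $V$ is then either $1$- or $2$-dimensional, so $(\dim J,\dim V)\in\{(2,1),(1,2)\}$, which is precisely the meaning of the two families of subscripts in the statement.

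The backbone of the argument is the dichotomy $\alpha_M(J)\subseteq J$ versus $\alpha_M(J)\nsubseteq J$. In the first case Theorem \ref{structureMAY24} applies verbatim: $J$ carries the projected bracket, $\rho(x)=[x,\cdot]_M$ is a representation, $\theta=\pi_V\circ[\cdot,\cdot]_M$ is a $2$-cocycle, and the isomorphism type of $M$ is governed by the class $[\theta]\in H^2_{\alpha,\beta}(J,V)$. I would then run the three-step algorithm following Theorem \ref{structureMAY24}: normalize $\alpha_J$ to Jordan form (diagonal, or a single $2\times2$ block), which fixes the isomorphism type of the base $J$ through the $1$- and $2$-dimensional classifications established above; then read off $\rho$, $Z^2_{\alpha,\beta}$ and $B^2_{\alpha,\beta}$. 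For $(\dim J,\dim V)=(2,1)$ these are exactly Lemmas \ref{abelian}, \ref{JordanAbelian}, \ref{lemma2DIAGONAL} and \ref{lemma2Jorfanblock}, covering the abelian base (diagonal and Jordan-block $\alpha_J$) and the two non-abelian bases $J^1_{1,1}$, $J^2_{1,1}$; solving their displayed linear systems for the constants $x_1,x_2,x_3$ under the eigenvalue constraints ($d=a^2$, $b^2=a^2$, and so on) yields representatives $J^1_{2,1}$ through $J^6_{2,1}$ after a diagonal change of basis normalizing the surviving constant to $1$. The case $(\dim J,\dim V)=(1,2)$ proceeds identically with $J$ the $1$-dimensional (hence abelian) base and $V$ two-dimensional: one first determines all admissible pairs $(\rho,\beta)$ satisfying \eqref{rep1}--\eqref{rep2}, then the compatible cocycles, producing $J^7_{1,2}$ through $J^{11}_{1,2}$.

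When $\alpha_M(J)\nsubseteq J$, the base decomposition breaks down and I would instead invoke Lemma \ref{Jvariant} together with Proposition \ref{JordanBlok}: the obstruction forces $\alpha_M$ to be a single $3\times3$ Jordan block, whence $a=1$, $\dim V=1$, $[v_1,u_i]=0$ and $[u_i,u_j]=x_{i,j}v_1$. Classifying the resulting symmetric matrices $(x_{i,j})$ up to the residual automorphisms commuting with a regular unipotent $\alpha_M$ then delivers the final family $J^{12}_{2,1}$ through $J^{17}_{2,1}$.

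Finally I would confirm that the seventeen algebras are pairwise non-isomorphic. The primary invariant is the conjugacy class of $\alpha$ (its Jordan type and eigenvalue multiset), which separates the three families at once; secondary invariants such as $\dim D^1(M)$, the rank of the symmetric bracket, and the position of $\im[\cdot,\cdot]$ relative to the eigenspaces of $\alpha$ then split the algebras sharing the same $\alpha$-type. The main obstacle is the bookkeeping: verifying that the case split on the Jordan form of $\alpha_M$ and on the eigenvalue relations is both exhaustive and mutually exclusive, and that no two normalized representatives coincide under an admissible change of basis. Since the cohomology has already been reduced to the finite linear-algebra problems solved in the cited lemmas, the remaining work is assembling and normalizing their outputs rather than any genuinely new computation.
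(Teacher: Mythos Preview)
Your proposal is correct and follows essentially the same route as the paper: the same solvable decomposition $M=J\oplus V$, the same dichotomy $\alpha_M(J)\subseteq J$ versus $\alpha_M(J)\nsubseteq J$, the same invocation of Theorem~\ref{structureMAY24} and the specific Lemmas~\ref{lemma2DIAGONAL}--\ref{Jvariant} in each subcase, and the same appeal to Proposition~\ref{JordanBlok} for the single Jordan block. The only place your sketch is more compressed than the paper is the $(\dim J,\dim V)=(1,2)$ branch: there the paper makes the representation step explicit by writing $\rho(u_1)=A$, deducing $A^2=0$ from \eqref{rep2}, and then splitting further on whether $A=0$ or $A$ is rank-one nilpotent and on the Jordan type of $\beta$, which is precisely what your phrase ``determine all admissible pairs $(\rho,\beta)$'' unpacks to.
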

\begin{proof}
	\begin{Cases}  $\dim(V)=1 $ and $ \alpha_M(J)\subset J $.\\
		Since $ J $ is a $2$-dimensional rm HJJ algebra. Then,
		there is a basis $\{v,u_1,u_2\}$ 
		such
		that the matrix representing $\alpha_{M}  $ with respect to this basis is		
		by one of the following forms:
		$\begin{pmatrix}
			c&0&0\\
			0&	a&0\\
			0&	0&b
		\end{pmatrix}$, $\begin{pmatrix}
			c&0&0\\
			0&	a&1\\
			0&	0&a
		\end{pmatrix}$. We recall that $ V=span\{v\} $ and $J=span \{u_1,u_2 \} $.
		By Theorem \ref{structureMAY24}, $V$ is representation of $J$ since 
		$ V $ is an abelian ideal of $M$. We set $\beta(v)=b  v $, $\rho(u_1)v=x v  $, $\rho(u_2)v=yv$.\\ 
		\begin{scass}{} If $\alpha_{M}=\begin{pmatrix}
				c&0&0\\
				0&	a&0\\
				0&	0&b
			\end{pmatrix}$. Then, $J$ is isomorphic to $J_{1,1}^{1}$ or it is abelian. 
			\begin{enumerate}[(i)]
				\item  $J$ is isomorphic to $J_{1,1}^{1}$:	
				According to Lemma \ref{lemma2DIAGONAL}, $ \left(M,[\cdot,\cdot]_M,\alpha_M  \right) $ is isomorphic to 	  $\left(M,[\cdot,\cdot]_0,\alpha_M\right) $ where  $ [u_1,u_1]_0=u_2 $. If we take $e_1=u_1$, $e_2=u_2 $ and $v_1=e_3$ 
				we obtain $J^{1}_{2,1}$.
				\item  $J$ is abelian: 	According to Lemma \ref{abelian}, we have the following cases:
				\[c=a^2 \text{  and  }  b=-a :\left\{
				\begin{array}{ll}
					x_1\neq 0  \text{  and  } x_3\neq 0	 &: \left[\frac{u_{1}}{x_{1}},\frac{u_{1}}{x_{1}} \right]=\frac{v_{1}}{x_{1}},\,
					\left[\frac{u_{2}}{\sqrt{x_{1}x_{1}}},\frac{u_{2}}{\sqrt{x_{1}x_{1}}} \right]=\frac{v_{1}}{x_{1}}   \\
					x_1\neq 0  \text{  and  } x_3= 0		&: \left[\frac{u_{1}}{x_{1}},\frac{u_{1}}{x_{1}} \right]=\frac{v_{1}}{x_{1}}\\
					x_1 =0  \text{  and  } x_3\neq 0		&: \left[\frac{u_{2}}{x_{3}},\frac{u_{2}}{x_{3}} \right]=\frac{v_{1}}{x_{3}}
				\end{array}\right.\]
				For case $x_1\neq 0$    and   $x_3\neq 0$: we take $ e_1 =\frac{u_{1}}{x_{1}}$, $ e_2=\frac{u_{2}}{\sqrt{x_{1}x_{1}}} $, $ e_3=\frac{v_{1}}{x_{1}} $. Then, $ J^{2}_{2,1} $.\\
				For case $x_1\neq 0$    and   $x_3= 0$: we take $ e_1 =\frac{u_{1}}{x_{1}}$, $ e_2=u_{2} $, $ e_3=\frac{v_{1}}{x_{1}} $. Then, $ J^{3}_{2,1} $.\\
				For case $x_1= 0$    and   $x_3= 0$: we take $ e_1 =\frac{u_{2}}{x_{3}}$, $ e_1=u_{2} $, $ e_3=\frac{v_{1}}{x_{3}} $, $ c=-a $. Then, we obtain $ M $ is also in the family $ J^{3}_{2,1} $.\\
				If $ c=a^2$   and  $b^2\neq a^2  $: $ x_{2}=x_3=0 $.
				we take $ e_1 =\frac{u_{1}}{x_{1}}$, $ e_2=u_{2} $, $ e_3=\frac{v_{1}}{x_{1}} $. Then, $ J^{4}_{2,1} $.\\	
			\end{enumerate}
		\end{scass}	
		\begin{scass}{} 	If $\alpha_{M}=\begin{pmatrix}
				b&0&0\\
				0&	a&1\\
				0&	0&a
			\end{pmatrix}$, 
			$J$ is isomorphic to $J_{1,1}^{2}$ (since $ \alpha $ is not diagonalizable) or it is abelian.
			\begin{enumerate}[(i)]
				\item 	If  $ J $ not abelian. Then, $J$ is isomorphic to $J_{1,1}^{2}$. Thus, using Lemma\ref{lemma2Jorfanblock} $M$ is isomorphic to 
				$ J^5_{2,1} $.
				\item If $J$ is abelian. Then, 	using Lemma \ref{JordanAbelian}, $M$ is isomorphic to 	$ J^6_{2,1} $.	 
			\end{enumerate}	 
		\end{scass}	
	\end{Cases}
	
	\begin{Cases}  $\dim(V)=2 $ and $ \alpha_M(J)\subset J $.\\
		There is a basis $\{v_{1},v_2,u_1\}$ 
		such
		that the matrix representing $\alpha_{M}  $ with respect to this basis is		
		by one of the following forms:
		$\begin{pmatrix}
			b&0&0\\
			0&	c&0\\
			0&	0&a
		\end{pmatrix}$, $\begin{pmatrix}
			b&1&0\\
			0&	b&0\\
			0&	0&a
		\end{pmatrix}$. We recall that $ V $ is abelian.
		By Theorem \ref{structureMAY24}, $V$ is representation of $J$ since 
		$ V $ is an abelian ideal of $M$. Let $ A=\begin{pmatrix}
			x_1&y_1\\
			x_2&	y_2
		\end{pmatrix} $  the matrix of $\rho(u_1) $ related with the basis $ (v_1,v_2) $ of $V$. By  $ J $ is abelian, $ \alpha_{M}(u_1)=au_1 $ and
		\eqref{rep2}, we have  $ A^2=0  $.  The solutions of the equation $A^2=0  $ are 
		$ \begin{pmatrix}
			0&0\\
			0&0
		\end{pmatrix}$ and $ \begin{pmatrix}
			x_1&-\frac{ x_1^2}{x_2}\\
			x_2&	-x_1
		\end{pmatrix}$. 
		\begin{scass}  $A= \begin{pmatrix}
				0&0\\
				0&0
			\end{pmatrix}$ and $\alpha_{M}=\begin{pmatrix}
				b&0&0\\
				0&	c&0\\
				0&	0&a
			\end{pmatrix}$.\\
			Then, 
			$ H^2_{\alpha,\beta}(J,V)=C^2_{\alpha,\beta}(J,V) $ since $\rho=0$ and $[\cdot,\cdot]=0$.
			Let  $\theta(u_1,u_1)=xv_1+yv_2  $ be  a $2$-cohain of $J$ on $v$. \\
			The condition $ \theta\circ \alpha=\beta \circ \alpha $ is equivalent to the system
			\[\left\{
			\begin{array}{ll}
				x(b-a^2)&=0\\
				y(a^2-c)&=0 
			\end{array}\right.\]
			Hence: if $ b=a^2 $, $ c\neq a^2 $, we have $ \theta(u_1,u_1)=xv_1 $ and $M$ is isomorphic to $ J^7_{1,2} $.\\
			If $ c=a^2 $, $ b\neq a^2 $, we have $ \theta(u_1,u_1)=yv_2 $. Hence, with the basis $ (v_2,v_1,u) $ we obtain the previous case. Then, $ J^7_{1,2} $.\\
			If $ c=a^2 $, $ b= a^2 $,
			we have  $M$ is isomorphic to $ J^8_{1,2} $.
		\end{scass}	
		\begin{scass}  $A= \begin{pmatrix}
				0&0\\
				0&0
			\end{pmatrix}$ and	
			\mathversion{bold}
			If $\alpha_{M}=\begin{pmatrix}
				b&1&0\\
				0&	b&0\\
				0&	0&a
			\end{pmatrix}$.
			\mathversion{normal}
			Since $\theta\circ \alpha=\beta \circ \alpha $, one has
			\[\left\{
			\begin{array}{ll}
				x(b-a^2)&=y	\\
				y(a^2-b)&=0 
			\end{array}\right.\]
			As a consequence, we obtain $2$-coycles as follows:
			\begin{enumerate}[(i)]
				\item $ \theta(u_1,u_1)=xv_1 $ if $ b=a^2 $,
				\item $ \theta(u_1,u_1)=0 $ if $ b\neq a^2 $.	
			\end{enumerate} 
			The first $2$-coycle
			correspond to anyone rm HJJ algebra $ J^{9}_{1,2} $ and  the second to an abelian   HJJ algebra.	                 
		\end{scass}
		\begin{scass} 
			$ A=\begin{pmatrix}
				x_1&-\frac{ x_1^2}{x_2}\\
				x_2&	-x_1
			\end{pmatrix}$   and $\alpha_{M}=\begin{pmatrix}
				b&0&0\\
				0&	c&0\\
				0&	0&a
			\end{pmatrix}$.\\. Let $\theta$ be a $2$-cocyle of $J$ on $V$. Then, by \eqref{cocycle}, we have $ \rho(\alpha(u_1))\theta(u_1,u_1) =0$. Which implies that $\theta(u_1,u_1)=y\left( \frac{ x_1}{x_2}v_1  +v_2 \right) $ since $ \alpha(u_1)=au_1 $ and $ \rho(u_1)=A $.
			The condition $\theta\circ \alpha=\beta \circ \alpha $ gives $b=c=a^2$
			As a consequence, we obtain  HJJ  algebras as follow:
			$ [u_1,v_1]=x_1v_1+x_2v_2 $, $[u_1,v_2]=-x_1\left(\frac{ x_1}{x_2} v_1+v_2 \right) $, $ [u_1,u_1]=y\left( \frac{ x_1}{x_2}v_1  +v_2 \right) $. Let $w=x_1v_1+x_2v_2$. In the basis $ (v_1,w,u_1) $ we have 
			$\alpha_{M}=\begin{pmatrix}
				a^2&0&0\\
				0&	a^2&0\\
				0&	0&a
			\end{pmatrix}$ and $ [u_1,v_1]=w$, $ [u_1,w] =0$, $ [u_1,u_1]=\frac{y}{x_2}w$.   
			Let us consider the isomorphism $f\colon M\to M$ defined by
			by $ f(v_1)=v_1 $, $ f(w)=\frac{y}{x_2} w $,  $ f(u_1)=\frac{y}{x_2} u_1 $. We obtain  $ J^{10}_{1,2} $.
		\end{scass}	
		\begin{scass} 
			$ A=\begin{pmatrix}
				x_1&-\frac{ x_1^2}{x_2}\\
				x_2&	-x_1
			\end{pmatrix}$   and $\alpha_{M}=\begin{pmatrix}
				b&1&0\\
				0&	b&0\\
				0&	0&a
			\end{pmatrix}$.\\. Let $\theta$ be a $2$-cocyle of $J$ on $V$. Then, by \eqref{cocycle}, we have $ \rho(\alpha(u_1))\theta(u_1,u_1) =0$. Which implies that $\theta(u_1,u_1)=y\left( \frac{ x_1}{x_2}v_1  +v_2 \right) $ since $ \alpha(u_1)=au_1 $ and $ \rho(u_1)=A $.
			The condition $\theta\circ \alpha=\beta \circ \alpha $ gives $b=c=a^2$
			As a consequence, we obtain  HJJ  algebras as follow:
			$ [u_1,v_1]=x_1v_1+x_2v_2 $, $[u_1,v_2]=-x_1\left(\frac{ x_1}{x_2} v_1+v_2 \right) $, $ [u_1,u_1]=y\left( \frac{ x_1}{x_2}v_1  +v_2 \right) $. Let $w=x_1v_1+x_2v_2$. In the basis $ (v_1,w,u_1) $ we have 
			$\alpha_{M}=\begin{pmatrix}
				a^2&x_2&0\\
				0&	a^2&0\\
				0&	0&a
			\end{pmatrix}$ and $ [u_1,v_1]=w$, $ [u_1,w] =0$, $ [u_1,u_1]=\frac{y}{x_2}w$.   
			Let us consider the isomorphism $f\colon M\to M$ defined by
			by $ f(v_1)=v_1 $, $ f(w)=\frac{y}{x_2} w $,  $ f(u_1)=\frac{y}{x_2} u_1 $. We obtain  $ J^{11}_{1,2} $.
		\end{scass}			
	\end{Cases}
	\begin{Cases}   $ \alpha_M(J) \nsubseteq J $.Then, from lemma \ref{Jvariant}:
		If $x_{1,1}\neq 0  $ and  $x_{1,2}\neq 0  $ we have
		$ [\frac{u_1}{x_{1,1}},\frac{u_1}{x_{1,1}}]=\frac{v_{1}}{x_{1,1}} $, $ [\frac{u_1}{x_{1,1}},\frac{u_2}{x_{1,2}}]=\frac{v_{1}}{x_{1,1}} $ and
		$ [\frac{u_2}{x_{1,2}},\frac{u_2}{x_{1,2}}]=\frac{x_{2,2}x_{1,1}}{x_{1,2}^2}\frac{v_{1}}{x_{1,1}} $. Hence, if we take $ e_1=\frac{u_1}{x_{1,1}} $, $ e_2=\frac{u_2}{x_{1,2}} $ and $ e_3=\frac{v_{1}}{x_{1,1}} $	 we obtain   $ J^{12}_{1,2} $.\\
		Similar, if  $x_{1,1}\neq 0  $,  $x_{1,2}= 0  $ and  $x_{2,2}\neq 0  $
		we obtain  $J^{13}_{1,2} $.\\
		if  $x_{1,1}\neq 0  $,  $x_{1,2}= 0  $ and  $x_{2,2}= 0  $
		we obtain  $J^{14}_{1,2} $.\\
		
	\end{Cases}	
\end{proof}
\section{ Metric  Hom-Jacobi-Jordan algebras}
In this section, we introduce the notion of metric  Hom-Jacobi-Jordan algebras,  and provide some properties.	
\begin{defn}
	A metric  Hom-Jacobi-Jordan algebra  is a $ 4 $-tuple $ \left( J,[\cdot,\cdot],\alpha,B \right)  $ consisting of a Hom-Jacobi-Jordan algebra $ \left( J,[\cdot,\cdot],\alpha \right)  $ and a nondegnerate symmetric bilinear form $B$
	satisfying :
	\footnotesize{
		\begin{align}
			B(x,[y,z]))&=B([x,y],z) \text{  (invariance of $B$)}\label{invDef} \\  B(\alpha(x),y)&=B(x,\alpha(y))\text{  (Hom-invariance of $B$)}\label{alphadef}		      
	\end{align}}	
	for any $ x,\,y,\, z\in J $.
	for any $ x,\,y,\, z\in J $.
	We recover the  metric Jacobi-Jordan algebra   when
	$ \alpha = id$.
\end{defn}
We say that two metric Hom-Jacobi-Jordan algebras  $ \left( J,[\cdot,\cdot],\alpha,B \right)   $  and $ \left( J',[\cdot,\cdot]',\alpha',B' \right)   $ are isometrically
isomorphic (or $i$-isomorphic, for short) if there exists a  Hom-Jacobi-Jordan isomorphism $ f $
from $ J $ onto $ J' $ satisfying $ B'\left(f(x),f(y) \right)=B(x,y)  $
for all $ x,y\in J $ In this case,
$ f $ is called an i-isomorphism.\\
\begin{defn}
	Let $I$ be an ideal of	a metric Hom-Jacobi-Jordan algebra		 $ \left( J,[\cdot,\cdot],\alpha,B \right)   $.   		                      
	\begin{enumerate}[(1)]
		\item  The orthogonal $ I^{\perp} $ of I, with respect to $B$, is defined by
		\[  I^{\perp}=\{x\in \mathfrak J\mid B( x,y)=0\, \forall y\in I  \}. \]
		\item An ideal $I$ is isotropic if $I\subset I^{\perp}$.	
	\end{enumerate}	
\end{defn}

Let $( J,[\cdot,\cdot],\alpha,B)$  be a multiplicative metric  Hom-Jacobi-Jordan algebra. Since $B$ is non-degenerate and invariant
we have some simple properties of $J$ as follows:				
\begin{prop}
	\begin{enumerate}[(1)]	
		\item 	the center $\mathfrak Z(J)$ is an ideal of $J$.
		\item $\mathfrak Z(J)=[J,J]^\perp$ and then $ \dim(\mathfrak Z(J))+\dim([J,J])=\dim(J)$. 
	\end{enumerate}			
\end{prop}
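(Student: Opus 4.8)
The plan is to prove the two statements by exploiting the non-degeneracy and invariance of $B$. For part (1), I would show $\mathfrak Z(J)$ is invariant under $\alpha$ and absorbs brackets. The absorption $[\mathfrak Z(J),J]=0\subset\mathfrak Z(J)$ is immediate from the definition of the center. The only real content is checking $\alpha(\mathfrak Z(J))\subset\mathfrak Z(J)$: for $x\in\mathfrak Z(J)$ and arbitrary $y\in J$, I would use multiplicativity, namely Eq.~\eqref{multiplicative}, to write $[\alpha(x),\alpha(y)]=\alpha([x,y])=\alpha(0)=0$. Since $\alpha$ is an automorphism (the algebra is regular, so $\alpha$ is surjective), every element of $J$ is of the form $\alpha(y)$, and hence $[\alpha(x),z]=0$ for all $z\in J$, giving $\alpha(x)\in\mathfrak Z(J)$.

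For part (2), the heart of the matter is the identity $\mathfrak Z(J)=[J,J]^{\perp}$, which I would prove by a direct double inclusion using invariance Eq.~\eqref{invDef}. Take $x\in J$. I claim $x\in\mathfrak Z(J)$ if and only if $B(x,[y,z])=0$ for all $y,z\in J$, i.e. $x\in[J,J]^{\perp}$. Indeed, by invariance $B(x,[y,z])=B([x,y],z)$ for all $y,z$. If $x\in\mathfrak Z(J)$ then $[x,y]=0$, so the right-hand side vanishes and $x\in[J,J]^{\perp}$. Conversely, if $x\in[J,J]^{\perp}$ then $B([x,y],z)=0$ for all $z\in J$; since $B$ is non-degenerate this forces $[x,y]=0$ for every $y$, so $x\in\mathfrak Z(J)$. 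This chain of equivalences is really the whole argument, and the symmetry of the bracket means I need not worry about left versus right centers.

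Once the equality $\mathfrak Z(J)=[J,J]^{\perp}$ is in hand, the dimension formula follows from the standard linear-algebra fact that for a non-degenerate bilinear form $B$ on a finite-dimensional space one has $\dim(W)+\dim(W^{\perp})=\dim(J)$ for any subspace $W$; applying this with $W=[J,J]$ yields
\[
\dim(\mathfrak Z(J))+\dim([J,J])=\dim([J,J]^{\perp})+\dim([J,J])=\dim(J).
\]
The main obstacle, if any, is a minor one: I must be careful that the non-degeneracy is used correctly to pass from $B([x,y],z)=0$ for all $z$ to $[x,y]=0$, and that $\alpha$ being an automorphism (rather than merely multiplicative) is what licenses the surjectivity step in part (1). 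Both are guaranteed by the standing hypotheses that $B$ is non-degenerate and that the algebra is a multiplicative metric Hom-Jacobi-Jordan algebra, so no genuine difficulty arises; the proof is essentially a formal consequence of invariance plus non-degeneracy.
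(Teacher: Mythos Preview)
Your argument for part (2) is correct and is exactly the one the paper has in mind: the sentence preceding the proposition says these properties follow because ``$B$ is non-degenerate and invariant,'' and your double-inclusion via $B(x,[y,z])=B([x,y],z)$ is precisely that.

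There is, however, a genuine gap in your proof of part (1). The standing hypothesis is that $(J,[\cdot,\cdot],\alpha,B)$ is a \emph{multiplicative} metric Hom-Jacobi-Jordan algebra; regularity (i.e.\ bijectivity of $\alpha$) is \emph{not} assumed here. Multiplicativity only gives $\alpha([x,y])=[\alpha(x),\alpha(y)]$, so from $x\in\mathfrak Z(J)$ you can conclude $[\alpha(x),\alpha(y)]=0$ for all $y$, but without surjectivity of $\alpha$ this does not yield $[\alpha(x),z]=0$ for arbitrary $z\in J$. Your claim that regularity is ``guaranteed by the standing hypotheses'' is incorrect.

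The fix is to use the metric structure, in line with the paper's own hint. For $x\in\mathfrak Z(J)$ and arbitrary $z,w\in J$, invariance \eqref{invDef}, Hom-invariance \eqref{alphadef}, and multiplicativity give
\[
B([\alpha(x),z],w)=B(\alpha(x),[z,w])=B(x,\alpha([z,w]))=B(x,[\alpha(z),\alpha(w)])=B([x,\alpha(z)],\alpha(w))=0,
\]
and non-degeneracy of $B$ then forces $[\alpha(x),z]=0$, so $\alpha(x)\in\mathfrak Z(J)$. This avoids any appeal to surjectivity of $\alpha$ and matches the paper's intended route.
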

\begin{prop}\label{IdealorthogonalCentre}
	Let $I$ be an ideal of a metric Hom-Jacobi-Jordan algebra $( J,[\cdot,\cdot],\alpha,B)$. Then,
	\begin{enumerate}[(1)]					
		\item $I^{\perp}$  is an ideal of $J$.
		\item   the centralizer $ \mathfrak Z(I) $ of $I  $ contains $ I^{\perp}$.
	\end{enumerate}			
\end{prop}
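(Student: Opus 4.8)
The plan is to derive both statements directly from the two invariance properties \eqref{invDef} and \eqref{alphadef} of $B$, together with the nondegeneracy of $B$, using only the defining closure conditions of an ideal (stability under bracketing with arbitrary elements of $J$ and under $\alpha$). Note first that $I^{\perp}$ is automatically a linear subspace, so in part (1) only the two closure conditions remain to be verified.

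For part (1), fix $x\in I^{\perp}$ and $y\in J$; to see $[x,y]\in I^{\perp}$ I test against an arbitrary $z\in I$. The invariance \eqref{invDef} gives $B([x,y],z)=B(x,[y,z])$, and since $I$ is an ideal we have $[y,z]\in I$, whence the right-hand side vanishes because $x\in I^{\perp}$. As $z\in I$ was arbitrary, $[x,y]\in I^{\perp}$. For stability under $\alpha$, fix $x\in I^{\perp}$ and $z\in I$; the Hom-invariance \eqref{alphadef} gives $B(\alpha(x),z)=B(x,\alpha(z))$, and $\alpha(z)\in I$ because $I$ is an ideal, so the right-hand side is $0$. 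Hence $\alpha(x)\in I^{\perp}$, and $I^{\perp}$ is an ideal.

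For part (2), I would prove the pointwise statement $[x,y]=0$ for every $x\in I^{\perp}$ and $y\in I$, which is exactly the inclusion $I^{\perp}\subseteq\mathfrak Z(I)$. Fix such $x,y$ and let $z\in J$ be arbitrary. Invariance \eqref{invDef} gives $B([x,y],z)=B(x,[y,z])$; since $y\in I$ and $I$ is an ideal, $[y,z]\in I$, so the right-hand side vanishes because $x\in I^{\perp}$. Thus $B([x,y],z)=0$ for all $z\in J$, and nondegeneracy of $B$ forces $[x,y]=0$, so $x\in\mathfrak Z(I)$.

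The computations are short, and I do not expect a serious obstacle; the only genuine subtlety is that part (2), unlike part (1), crucially invokes nondegeneracy of $B$ to pass from $B([x,y],\cdot)\equiv 0$ to $[x,y]=0$. The point that most needs care is the bookkeeping of which variable ranges over $I$ versus $J$: in part (1) the test vector $z$ lies in $I$ while $y$ is free in $J$, whereas in part (2) $y$ lies in $I$ and $z$ ranges over all of $J$. In both cases it is the ideal property that places $[y,z]$ back inside $I$, so that the orthogonality of $x$ to $I$ can be applied.
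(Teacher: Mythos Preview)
Your proof is correct. The paper states this proposition without proof, so there is nothing to compare against; your argument is the standard one and uses exactly the ingredients one would expect: invariance \eqref{invDef} to move the bracket across $B$, Hom-invariance \eqref{alphadef} for stability under $\alpha$, and nondegeneracy of $B$ in part~(2) to pass from $B([x,y],\cdot)\equiv 0$ to $[x,y]=0$. Your remark about which variable ranges over $I$ versus $J$ is well observed and is indeed the only place where care is needed.
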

For the rest of this paper, for any metric Hom-Jacobi-Jordan algebra, the generalized coadjoint representation identity it satisfied.
\begin{prop}	\label{gamma}
	A $4$-tuple $ \left(  J,[\cdot,\cdot],\alpha,B \right) $ is a metric Hom-Jacobi-Jordan algebra if and only if $B$ is a 	nondegenerate  symmetric bilinear form satisfying  $B(\alpha(x),y)=B(x,\alpha(y))  $ and 	$d_{r}^3\gamma=0$ where 
	$\gamma (x,y,z)= B([x,y],z) $ and 	$d_{r}^3  $ is given in \eqref{operatorCoadjoint3}.
\end{prop}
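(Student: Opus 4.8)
The plan is to insert the definition $\gamma(x,y,z)=B([x,y],z)$ into the formula \eqref{operatorCoadjoint3} for $d_r^3$ and to collapse the resulting six terms, using the invariance \eqref{invDef} together with the symmetry of the bracket, into a single expression of the shape $B(\bullet\,,t)$; nondegeneracy of $B$ then transfers the vanishing of $d_r^3\gamma$ to the vanishing of $\bullet$, which is exactly the left-hand side of the Hom-Jacobi identity.

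Concretely, reading $d(x,y)$ as $[x,y]$ in \eqref{operatorCoadjoint3} and substituting $\gamma=B([\cdot,\cdot],\cdot)$ gives
\begin{align*}
d_r^3\gamma(x,y,z,t)
&=B\big([[x,y],\alpha(z)],t\big)+B\big([[x,z],\alpha(y)],t\big)+B\big([[y,z],\alpha(x)],t\big)\\
&\quad+B\big([x,y],[\alpha(z),t]\big)+B\big([y,z],[\alpha(x),t]\big)+B\big([x,z],[\alpha(y),t]\big).
\end{align*}
Applying invariance \eqref{invDef} in the form $B(a,[b,c])=B([a,b],c)$ to each term on the second line (with $a=[x,y],\,b=\alpha(z),\,c=t$, and its two analogues) rewrites it as the matching term on the first line, so the six terms reduce to twice the first three. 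Using the symmetry of $[\cdot,\cdot]$ to write $[[x,y],\alpha(z)]=[\alpha(z),[x,y]]$, and likewise for the other two, I obtain the key identity
\[
d_r^3\gamma(x,y,z,t)=2\,B\big([\alpha(x),[y,z]]+[\alpha(y),[z,x]]+[\alpha(z),[x,y]],\,t\big),
\]
in which the bracketed argument is precisely the left-hand side of the Hom-Jacobi identity \eqref{jacobie}.

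From this identity both implications follow at once. If the $4$-tuple is a metric Hom-Jacobi-Jordan algebra, then $B$ is nondegenerate, symmetric and satisfies \eqref{alphadef} by definition, and the Hom-Jacobi identity makes the bracketed argument vanish, so $d_r^3\gamma=0$. Conversely, assuming $B$ nondegenerate, symmetric and $\alpha$-compatible, the identity shows that $d_r^3\gamma=0$ forces $B\big([\alpha(x),[y,z]]+[\alpha(y),[z,x]]+[\alpha(z),[x,y]],t\big)=0$ for every $t$, whence nondegeneracy yields \eqref{jacobie}; thus $(J,[\cdot,\cdot],\alpha)$ is a Hom-Jacobi-Jordan algebra and, with the assumed properties of $B$, the $4$-tuple is metric.

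The one delicate point, and the step I would treat most carefully, is the collapse of the second line onto the first: it is exactly here that invariance \eqref{invDef} is spent, and it enters in both directions. In the forward direction invariance is part of the metric hypothesis, so there is nothing to check; in the converse one must make sure invariance is genuinely in force before running the identity backwards, since it is what lets the last three terms be rewritten with $t$ in the outer slot. I would therefore verify that invariance is available for the form $\gamma$ under consideration — equivalently, that $\gamma(x,y,z)=B([x,y],z)$ is totally symmetric — and then confirm the pairing of the three inner-bracket terms and the resulting factor $2$. Apart from this bookkeeping the proof is the routine substitution above followed by an appeal to nondegeneracy.
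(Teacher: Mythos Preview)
Your forward direction is correct and in fact cleaner than the paper's: you use invariance \eqref{invDef} to move $t$ to the outer slot in each of the three ``inner-bracket'' terms, which collapses the six terms to twice the first three. The paper instead moves $x$ out, invokes the $\alpha$-symmetry \eqref{alphadef}, and then appeals to the generalized coadjoint identity \eqref{repcoadjoint} in addition to Hom--Jacobi; your route avoids that detour entirely.

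The gap is in the converse. The hypotheses listed in the statement are only that $B$ is nondegenerate, symmetric, $\alpha$-compatible, and that $d_r^3\gamma=0$. None of these gives you invariance \eqref{invDef}, and none gives you the symmetry of $[\cdot,\cdot]$; yet both are part of what ``metric Hom--Jacobi--Jordan algebra'' means, and your collapse step literally uses invariance. You correctly flag this as the delicate point, but ``verify that $\gamma$ is totally symmetric'' is not something you can do from the stated hypotheses---it is an extra condition. The paper's own proof silently adds it: the converse there begins with ``we assume that $\gamma\in S^3(J,\R)$ and $d_r^3\gamma=0$'', and then \emph{derives} both the symmetry of the bracket (from $\gamma(x,y,z)=\gamma(y,x,z)$ and nondegeneracy) and the invariance of $B$ (from $\gamma(x,y,z)=\gamma(y,z,x)$). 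Only after those two facts are in hand does the collapse-to-$2B(\cdot,t)$ argument run. So to close your converse you must either add $\gamma\in S^3(J,\R)$ to the hypotheses (as the paper effectively does) and then extract bracket-symmetry and invariance from it, or else explain some other source for \eqref{invDef}; as written, your key identity is not available in that direction.
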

\begin{proof}
	Let $B$ a 	 nondegenerate  symmetric bilinear form such that for all $x,y\in J$,
	\begin{equation}\label{alphaInvariant}
		B(\alpha(x),y)=B(x,\alpha(y)). 	                         
	\end{equation}	
	For all $x,y,z\in J$, we have
	\begin{align}
		d_{r}^3&\gamma(x,y,z,t)\nonumber\\
		&=
		\gamma([x,y],\alpha(z),t)
		+\gamma([x,z],\alpha(y),t)
		+\gamma([y,z],\alpha(x),t)\nonumber\\	
		+& \gamma\left( x,y,[\alpha(z),t ]\right)
		+\gamma\left( y,z,[\alpha(x),t] \right)
		+\gamma\left( x,z,[\alpha(y),t] \right)\nonumber\\
		&=B(\left[ [x,y],\alpha(z)\right] ,t)+B\left( \left[ [x,z],\alpha(y)\right] ,t\right) 
		+B\left( \left[ \alpha(x),[y,z]\right] ,t\right) \label{invariant1}\\
		+&B([x,y],[\alpha(z),t] )
		+B([y,z],[\alpha(x),t] )
		+B([x,z],[\alpha(y),t]).\label{invariant2}		
	\end{align}
	If the condition \eqref{invDef} is satisfied, we have
	\begin{align*}
		\eqref{invariant2}=B(x,\left[ y,[\alpha(z),t]\right]  )
		+B(\left[ [y,z],t\right] ,\alpha(x) )
		+B(x,\left[ z,[\alpha(y),t]\right] )
	\end{align*}
	By 	\eqref{alphaInvariant}, we have 
	$B(\left[ [y,z],t\right] ,\alpha(x) )=B\left(\alpha\left(\left[ [y,z],t\right] \right),x  \right)   $. Hence,
	\begin{equation*}
		\eqref{invariant2}=B(x,\left[ y,[\alpha(z),t]\right]  )
		+B(x, \alpha\left(\left[ [y,z],t\right] \right))
		+B(x,\left[ z,[\alpha(y),t]\right] ) \label{hbel}	     
	\end{equation*}
	Then, if \eqref{invDef} and \eqref{alphaInvariant} are satisfied, we obtain
	\begin{align}
		d_{r}^3&\gamma(x,y,z,t)\nonumber\\
		&=B(\left[ [x,y],\alpha(z)\right] ,t)+B\left( \left[ [x,z],\alpha(y)\right] ,t\right) 
		+B\left( \left[ \alpha(x),[y,z]\right] ,t\right) \label{Oinvariant1}\\
		+&B(x,\left[ y,[\alpha(z),t]\right]  )
		+B(x, \alpha\left(\left[ [y,z],t\right] \right))
		+B(x,\left[ z,[\alpha(y),t]\right] )\label{Oinvariant2}
	\end{align}		
	By the Hom-Jacobi identity, we obtain that \eqref{Oinvariant1}=0. \\ On the other hand,
	by the generalized coadjoint representation identity, we obtain  \eqref{Oinvariant2}=0.\\			
	Therefore, $d_r^3\gamma=0$.\\
	We now show that $ \gamma\in  S^3(J,\R)  $. For all $x,y,z\in J$,
	by Eq.\eqref{invDef}, $[\cdot,\cdot]$ and $B$ are symmetric, we have 
	\begin{equation*}
		B([x,y],z)=B([y,x],z)=B\left(y,[x,z] \right)=B\left([x,z] ,y \right),  
	\end{equation*}	
	which implies that
	\begin{equation*}
		\gamma (x,y,z)=\gamma (y,x,z) =\gamma (x,z,y).                                         
	\end{equation*}
	So
	\begin{equation*}
		\gamma (x,z,y)=\gamma (z,x,y)		
		=\gamma (x,y,z)                                        
	\end{equation*}
	and 
	\begin{equation*}
		\gamma (y,z,x)=\gamma (z,y,x)		
		=\gamma (y,x,z).                                         
	\end{equation*} 
	Therefore,  $\gamma\in S^3(  J,\R)$.\\
	
	Conversely, we assume that $\gamma\in S^3({ J},\R)$ and $  d_{r}^3\gamma =0 $.\\
	First, we  verify   the symmetric condition for $[\cdot,\cdot]$:\\
	By $ \gamma\in S^3(J,\R) $, we have	$\gamma(x,y,z)=\gamma(y,x,z)$, Hence  $B([x,y],z)=B([y,x],z)$. 	
	Since $B$ is nondegenerate one can deduce $[x,y]=[y,x]$.\\
	Next, we verify the Eq.\eqref{invDef}. For any $x,y,z\in J$,we have
	$ \gamma(x,y,z)=\gamma(y,z,x) $, that is $ B([x,y],z)=B([y,z],x)  $. Then $B([x,y],z)=B(x,[y,z])$.
	So, Eq.\eqref{invDef} holds.\\
	Now, we prove the Hom-Jacobi-Jordan identity.\\
	For all $x,y,z\in J$, by  
	Eq.\eqref{invDef}, we have
	\begin{align*}
		\eqref{invariant2}=	B\big(\left[ [x,y],\alpha(z) \right],t    \big)
		+B\big( \left[[y,z],\alpha(x) \right],t    \big)
		+B\big( \left[[x,z],\alpha(y) \right],t    \big).   
	\end{align*}
	Thus, \[d_r^3\gamma(x,y,z,t)=2\Big(B\big(\left[ [x,y],\alpha(z) \right],t    \big)
	+B\big( \left[[y,z],\alpha(x) \right],t    \big)
	+B\big( \left[[x,z],\alpha(y) \right],t    \big)\Big).  \]
	Since $d_r^3\gamma=0$ and $B$ is nondegenerate, we get  the Hom-Jacobi identity.\\
	Finally, we prove the coadjoint representation identity.\\
	Since \eqref{invDef} and \eqref{alphaInvariant} are satisfied, we have
	$d_r^3\gamma(x,y,z,t)=\eqref{Oinvariant1}+\eqref{Oinvariant2}$.
	Since $ d_r^3\gamma(x,y,z,t)=0 $ and $ \eqref{Oinvariant1}=0 $, we obtain $\eqref{Oinvariant2}=0  $. This finishes the proof.
\end{proof}
\section{Second cohomology group 
	of metric Hom-Jacobi-Jordan-algebra}  
The task of this section is to introduce the second cohomology group of metric Hom-Jacobi-Jordan-algebra, which we will use to describe the quadratic
extensions.

\subsection{Construction of a $2$-Coboundary operators for metric  Hom-Jacobi-Jordan  algebra }	
Let $M=J\oplus \mathfrak a  $ be a Hom-Jacobi-Jordan algebra with  structure $ \alpha_{M}=\alpha+\beta  $ where $\alpha\colon J\to J$, $ \beta\colon \mathfrak a\to  \mathfrak a$ and 
$[\cdot,\cdot]_{M}$  such that $\mathfrak a $ is an abelian ideal of  $M$. Then by Theorem \ref{structureMAY24}, $[\cdot,\cdot]_{M}=[\cdot,\cdot]+\rho+\theta $, where $ (J,[\cdot,\cdot],\alpha) $ is a Hom-Jacobi-Jordan algebra,  $ \rho  $ is a representation of $J$ on $\mathfrak a$ and $\theta$ is a $2$-cocycle of $J$ on $\mathfrak a$. 
Let $ \mathfrak n=M\oplus J^* $,  $d'\colon \mathfrak n^2\to \mathfrak n $ a bilinear symmetric map
and $\widetilde{\alpha}\colon  \mathfrak n \to  \mathfrak n $
a linear map given by
$ \alpha_{\mathfrak n}(x+v+Z)=\alpha_{M}(x+v)+\alpha^*(Z)) $
for all $x\in J$, $v\in V$, $Z\in J^*$.\\			
Let 
$$0\longrightarrow (J^*,\alpha^*)\stackrel{i}{\longrightarrow} ( \mathfrak n,[\cdot,\cdot]_{\mathfrak n},\alpha_{\mathfrak n})\stackrel{\pi}{\longrightarrow }(M,[\cdot,\cdot]_{M},\alpha_{M}) \longrightarrow 0,$$
be an abelian extension of $ M $ by $ J^* $. Then (by Theorem \ref{structureMAY24}) $[\cdot,\cdot]_{\mathfrak n}=[\cdot,\cdot]_{M}+ \rho^{*}+\gamma'$
where  $ \rho^{*}  $ is a representation of $M$ on $J^*$ and $\gamma'$ is a $2$-cocycle of $M$ on $J^*$. 
Hence,		
\begin{align}
	[x,y]_{\mathfrak n} &=[x,y]+\theta(x,y)+\gamma'(x,y);\label{1structureMetric}\\
	[x,v]_{\mathfrak n}&=\rho(x)v+\gamma'(x,v);\label{2structureMetric}\\
	[v,w]_{\mathfrak n}&=\gamma'(v,w);\label{3structureMetric}\\
	[Z,x]_{\mathfrak n}&=	\rho^{*}(x)Z\\
	[Z,v]_{\mathfrak n}&=\rho^{*}(v)Z \\
	[Z_1,Z_2]_{\mathfrak n}&=0      
\end{align}
for all $x\in J, \, v\in V, Z_1,Z_2 \in J^*$.\\
Let  $B\colon \mathfrak{n}^2\to \R  $ be a  bilinear form such that $ \left(\mathfrak n,[\cdot,\cdot]_{\mathfrak n},   \alpha_{\mathfrak n}, B \right)  $ is a 	metric  Hom-Jacobi-Jordan algebra, the ideals  $J$ and $ J^* $ are isotropic and
\begin{equation}\label{BJ*}
	B(Z,x+v)=Z(x)
\end{equation} 
for all  $Z\in J^*,\, x\in J,\, v\in \mathfrak{a}$.
\begin{lem}
	Under the above notation, 
	we have 
	\begin{equation*}
		[Z,x]_{\mathfrak n}=Z([x,\cdot])  \text{ and }  [Z,v]_{\mathfrak n}=0   
	\end{equation*}
	for all $Z\in J^*,\, x\in J, v\in \mathfrak a$.
\end{lem}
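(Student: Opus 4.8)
The plan is to exploit that in an abelian extension the summand $J^*$ is an ideal, so both $[Z,x]_{\mathfrak n}=\rho^*(x)Z$ and $[Z,v]_{\mathfrak n}=\rho^*(v)Z$ automatically lie in $J^*$; each is therefore a linear functional on $J$, and two such functionals agree as soon as they agree on every $y\in J$. To read off the value of a functional $W\in J^*$ at $y\in J$, I would use \eqref{BJ*} with $v=0$, which gives $W(y)=B(W,y)$. Thus the whole lemma reduces to computing the numbers $B([Z,x]_{\mathfrak n},y)$ and $B([Z,v]_{\mathfrak n},y)$ for arbitrary $y\in J$.

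First I would treat $[Z,x]_{\mathfrak n}$. By the invariance \eqref{invDef} of $B$, applied as $B([Z,x]_{\mathfrak n},y)=B(Z,[x,y]_{\mathfrak n})$, the computation is transferred to a bracket inside $M\oplus J^*$. Expanding $[x,y]_{\mathfrak n}$ by \eqref{1structureMetric} produces the three homogeneous components $[x,y]\in J$, $\theta(x,y)\in\mathfrak a$ and $\gamma'(x,y)\in J^*$. Since $J^*$ is isotropic, $B(Z,\gamma'(x,y))=0$, while \eqref{BJ*} makes the $\mathfrak a$-component $\theta(x,y)$ pair trivially with $Z$ and evaluates the $J$-component to $Z([x,y])$. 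Hence $\bigl([Z,x]_{\mathfrak n}\bigr)(y)=Z([x,y])$ for all $y$, that is $[Z,x]_{\mathfrak n}=Z([x,\cdot])$.

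The second identity is the same argument with one further input. Using symmetry of the bracket and invariance, $B([Z,v]_{\mathfrak n},y)=B(Z,[y,v]_{\mathfrak n})$, and \eqref{2structureMetric} writes $[y,v]_{\mathfrak n}=\rho(y)v+\gamma'(y,v)$ with $\rho(y)v\in\mathfrak a$ and $\gamma'(y,v)\in J^*$. Both summands have zero $J$-component, so isotropy of $J^*$ together with \eqref{BJ*} forces $B(Z,[y,v]_{\mathfrak n})=0$; as $y$ was arbitrary, $[Z,v]_{\mathfrak n}=0$. The only genuine subtlety, and hence the step I would double-check most carefully, is the bookkeeping of the triple decomposition $\mathfrak n=J\oplus\mathfrak a\oplus J^*$: one must invoke isotropy of $J^*$ to discard every $J^*$-component and use the normalization \eqref{BJ*} to project precisely onto the $J$-component, the $\mathfrak a$-component always pairing trivially with $Z$. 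Everything past this observation is a routine reading-off of functionals.
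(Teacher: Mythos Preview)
Your proof is correct. For the first identity $[Z,x]_{\mathfrak n}=Z([x,\cdot])$ you argue exactly as the paper does: pair with an arbitrary $y\in J$, use the invariance \eqref{invDef} to pass to $B(Z,[x,y]_{\mathfrak n})$, and then read off the value via \eqref{BJ*} and isotropy of $J^*$.

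For the second identity your argument differs slightly from the paper's. You repeat the same direct computation, expanding $[y,v]_{\mathfrak n}$ via \eqref{2structureMetric} and observing that neither the $\mathfrak a$-component $\rho(y)v$ nor the $J^*$-component $\gamma'(y,v)$ pairs nontrivially with $Z$. The paper instead invokes the general structural fact (Proposition~\ref{IdealorthogonalCentre}) that $I^{\perp}\subset\mathfrak Z(I)$ for any ideal $I$ of a metric Hom-Jacobi-Jordan algebra: since $\mathfrak a\subset (J^*)^{\perp}$ by \eqref{BJ*}, this immediately gives $[\mathfrak a,J^*]_{\mathfrak n}=0$. Your route is more self-contained and makes explicit use of the bracket formula; the paper's route is shorter once the proposition is available and highlights that the conclusion is a purely formal consequence of $J^*$ being an ideal orthogonal to $\mathfrak a$, independent of the particular shape of \eqref{2structureMetric}.
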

\begin{proof}
	Let $Z\in J^*,\, x\in J, v\in \mathfrak a$. We have $B\left(Z,v  \right)=Z(v)=0$. Then, 
	$B\left(Z,[x,y]_{\mathfrak n}  \right)=Z([x,y])  $.
	Moreover, by invariance of $B$ we have $B\left(Z,[x,y]_{\mathfrak n}  \right)=B\left([Z,x]_{\mathfrak n},y  \right)$. Hence, $ \rho^*(x)Z(y)=Z([x,y])$. Which implies that $[Z,x]_{\mathfrak n}=Z([x,\cdot]) $.\\
	Now, we show that $ [Z,v]_{\mathfrak n}=0 $. Since $ J^* $	 is an ideal of $\mathfrak n  $, according to  Proposition  \ref{IdealorthogonalCentre} we have $(J^{*})^{\perp}\subset \mathfrak Z(J^{*})  $. Then, $ \mathfrak a \subset \mathfrak Z(J^{*})$ since $B\left(Z,v  \right)=0$. Therefore $[Z,v]_{\mathfrak n}=0$.
\end{proof}
\begin{prop}\label{PropSep17}
	For all $v,w\in \mathfrak a$, we have
	\begin{equation}
		B_{\mathfrak a}\left(\beta(v),w \right) =B_{\mathfrak a}\left(v,\beta(w)\right) .      
	\end{equation}	                  
\end{prop}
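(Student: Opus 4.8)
The plan is to recognize that $B_{\mathfrak a}$ denotes the restriction of the bilinear form $B$ to the subspace $\mathfrak a\times \mathfrak a$, so that the asserted identity is nothing but the Hom-invariance condition \eqref{alphadef} for the metric Hom-Jacobi-Jordan algebra $(\mathfrak n,[\cdot,\cdot]_{\mathfrak n},\alpha_{\mathfrak n},B)$, read off on the pair of ideal elements $v,w\in \mathfrak a$. No use of the bracket or of the cocycle $\gamma'$ is required; the whole content is a specialization of the $\alpha$-invariance axiom.

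First I would invoke the standing hypothesis that $(\mathfrak n,[\cdot,\cdot]_{\mathfrak n},\alpha_{\mathfrak n},B)$ is a metric Hom-Jacobi-Jordan algebra, so that by \eqref{alphadef}
\[
B(\alpha_{\mathfrak n}(p),q)=B(p,\alpha_{\mathfrak n}(q))
\]
for all $p,q\in \mathfrak n$. Specializing to $p=v$ and $q=w$ with $v,w\in \mathfrak a\subset \mathfrak n$ yields
\[
B(\alpha_{\mathfrak n}(v),w)=B(v,\alpha_{\mathfrak n}(w)).
\]

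Next I would compute $\alpha_{\mathfrak n}$ on $\mathfrak a$. By the defining formula $\alpha_{\mathfrak n}(x+v+Z)=\alpha_{M}(x+v)+\alpha^{*}(Z)$, taking $x=0$ and $Z=0$ gives $\alpha_{\mathfrak n}(v)=\alpha_{M}(v)=\beta(v)$ for $v\in \mathfrak a$, since $\alpha_{M}=\alpha+\beta$ with $\alpha\colon J\to J$ and $\beta\colon \mathfrak a\to \mathfrak a$, so that $\beta$ is precisely the restriction of $\alpha_{M}$ to $\mathfrak a$. Substituting $\alpha_{\mathfrak n}(v)=\beta(v)$ and $\alpha_{\mathfrak n}(w)=\beta(w)$ into the previous display gives $B(\beta(v),w)=B(v,\beta(w))$. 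Because all four arguments $\beta(v),\,w,\,v,\,\beta(w)$ lie in $\mathfrak a$, each side coincides with the corresponding value of the restriction $B_{\mathfrak a}=B|_{\mathfrak a\times \mathfrak a}$, which is exactly the claimed identity $B_{\mathfrak a}(\beta(v),w)=B_{\mathfrak a}(v,\beta(w))$.

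There is no genuine obstacle in this argument: the statement is a direct specialization of the Hom-invariance axiom for $B$ to the abelian ideal $\mathfrak a$. The only point deserving attention is the bookkeeping of the direct-sum decomposition $\mathfrak n=J\oplus \mathfrak a\oplus J^{*}$ together with the identifications $\alpha_{\mathfrak n}|_{\mathfrak a}=\beta$ and $B_{\mathfrak a}=B|_{\mathfrak a\times \mathfrak a}$; once these are in place the conclusion is immediate.
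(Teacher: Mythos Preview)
Your proof is correct and follows essentially the same approach as the paper: apply the Hom-invariance axiom \eqref{alphadef} for $B$ on $\mathfrak n$ to a pair $v,w\in\mathfrak a$, use that $\alpha_{\mathfrak n}$ restricts to $\beta$ on $\mathfrak a$, and conclude since $B_{\mathfrak a}=B|_{\mathfrak a\times\mathfrak a}$. Your write-up is in fact more explicit about the bookkeeping than the paper's one-line argument.
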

\begin{proof}
	By eq.\eqref{alphadef}, we have  $B\left((\alpha+\beta+\alpha')(v),w \right)=B\left(v ,(\alpha+\beta+\alpha')(w)\right)$. Therefore, $ B_{\mathfrak a}\left(\beta(v),w \right)=B_{\mathfrak a}\left(v,\beta(w)\right)  $.
\end{proof}
\begin{thm}
	\label{1Sep23}
	If $ \left(\mathfrak n,[\cdot,\cdot]_{\mathfrak n},\alpha_{\mathfrak n} \right)  $ is a 	  metric Hom-Jacobi-Jordan algebra. Then, for all $x,y\in J$, $v,w\in \mathfrak a$, $ Z\in J^* $, we have
	\begin{equation}\label{1d}
		\begin{aligned}
			[x,y]_{\mathfrak n}&=[x,y]+\theta(x,y)+\gamma(x,y,\cdot);\\
			[x,v]_{\mathfrak n}&=\rho(x)v
			+B_{\mathfrak a}\left(\theta(\cdot,x),v \right) ;\\
			[v,w]_{\mathfrak n}&=B_{\mathfrak a}\left(\rho(\cdot)v,w \right);\\
			[Z,x]_{\mathfrak n}&=Z\left([x,\cdot] \right);\\
			[Z_1,v+Z_2]_{\mathfrak n}&=0.      
		\end{aligned}
	\end{equation}	
	where $\gamma\in S^3(J,\R)  $.		  
	\label{lemmaGamma}	                     
\end{thm}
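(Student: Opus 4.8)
The plan is to read each of the five brackets off the extension data $[\cdot,\cdot]_{\mathfrak n}=[\cdot,\cdot]_M+\rho^{*}+\gamma'$ and then put them into the stated form using only three ingredients: the symmetry of $[\cdot,\cdot]_{\mathfrak n}$, the invariance $B(a,[b,c]_{\mathfrak n})=B([a,b]_{\mathfrak n},c)$, and the explicit shape of $B$. First I would record this shape: \eqref{BJ*} gives $B(J^{*},\mathfrak a)=0$ and makes $B(J^{*},J)$ the evaluation pairing, while the isotropy of $J$ and $J^{*}$ gives $B(J,J)=B(J^{*},J^{*})=0$. Thus $B$ is the orthogonal sum of the hyperbolic pair $J\oplus J^{*}$ with $(\mathfrak a,B_{\mathfrak a})$; in particular $B(J,\mathfrak a)=0$. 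With this in hand the last two identities of \eqref{1d} are immediate: $[Z,x]_{\mathfrak n}=Z([x,\cdot])$ is exactly the preceding Lemma, and $[Z_1,v+Z_2]_{\mathfrak n}=[Z_1,v]_{\mathfrak n}+[Z_1,Z_2]_{\mathfrak n}=0$ by that same Lemma together with $[Z_1,Z_2]_{\mathfrak n}=0$.

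For the first identity I would set $\gamma(x,y,z):=B([x,y]_{\mathfrak n},z)$ for $x,y,z\in J$. Writing $[x,y]_{\mathfrak n}=[x,y]+\theta(x,y)+\gamma'(x,y)$ and discarding the terms killed by $B(J,J)=0$ and $B(\mathfrak a,J)=0$, only the $J^{*}$-part survives, so $\gamma(x,y,z)=\gamma'(x,y)(z)$; this is the meaning of $\gamma(x,y,\cdot)$ in \eqref{1d}. It then remains to prove $\gamma\in S^{3}(J,\R)$. Symmetry in the first two slots is immediate from the symmetry of $[\cdot,\cdot]_{\mathfrak n}$, and for the third I would use invariance and the symmetry of $B$:
\[
\gamma(x,y,z)=B([x,y]_{\mathfrak n},z)=B(x,[y,z]_{\mathfrak n})=B([y,z]_{\mathfrak n},x)=\gamma(y,z,x).
\]
Since the transposition $(x,y)\mapsto(y,x)$ and the cycle $(x,y,z)\mapsto(y,z,x)$ generate $\mathfrak{S}_3$, full symmetry follows, i.e. $\gamma\in S^{3}(J,\R)$.

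The two mixed brackets are handled the same way, by pairing with an element of $J$ and using invariance to reduce everything to a pure $\mathfrak a$-pairing. For $v,w\in\mathfrak a$ and $z\in J$, invariance together with the symmetry of $B$ gives
\[
\gamma'(v,w)(z)=B([v,w]_{\mathfrak n},z)=B([z,v]_{\mathfrak n},w)=B_{\mathfrak a}(\rho(z)v,w),
\]
where the last step uses $[z,v]_{\mathfrak n}=\rho(z)v+\gamma'(z,v)$ and $B(J^{*},\mathfrak a)=0$; this is exactly $[v,w]_{\mathfrak n}=B_{\mathfrak a}(\rho(\cdot)v,w)$. Likewise, for $x,y\in J$ and $v\in\mathfrak a$, applying invariance to $B([x,y]_{\mathfrak n},v)=B(x,[y,v]_{\mathfrak n})$ and deleting the terms annihilated by $B(J,\mathfrak a)=0$ and $B(J^{*},\mathfrak a)=0$ leaves $B_{\mathfrak a}(\theta(x,y),v)=\gamma'(y,v)(x)$; relabelling $(x,y)\mapsto(z,x)$ yields $\gamma'(x,v)(z)=B_{\mathfrak a}(\theta(z,x),v)$, that is $[x,v]_{\mathfrak n}=\rho(x)v+B_{\mathfrak a}(\theta(\cdot,x),v)$.

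The main obstacle I anticipate is the clean bookkeeping of the block decomposition of $B$: every identity hinges on knowing precisely which blocks vanish, and in particular on the orthogonality $B(J,\mathfrak a)=0$, so I would isolate that reduction from \eqref{BJ*} and the isotropy hypotheses at the very start. The one genuinely substantive point is the full symmetry of $\gamma$; once the invariance-plus-symmetry argument above establishes $\gamma\in S^{3}(J,\R)$, the remaining identities are routine applications of invariance to the three homogeneous bracket types.
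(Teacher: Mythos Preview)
Your proof is correct and follows essentially the same path as the paper: both derive each identity by pairing the unknown bracket against a test element of $J$ (or $\mathfrak a$) and invoking the invariance of $B$, then use the symmetry of $[\cdot,\cdot]_{\mathfrak n}$ together with the cyclic relation $\gamma(x,y,z)=\gamma(y,z,x)$ to conclude $\gamma\in S^3(J,\R)$. The only caveat is that the orthogonality $B(J,\mathfrak a)=0$ which you state as a consequence (``Thus $B$ is the orthogonal sum\ldots'') does not actually follow from \eqref{BJ*} and isotropy alone---it is an implicit hypothesis of the setup---but the paper relies on it just as tacitly, so your argument matches the paper's in both content and level of rigor.
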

\begin{proof} Assume that $ \left(\mathfrak n,[\cdot,\cdot]_{\mathfrak n},\alpha_{\mathfrak n} \right)  $ is a 	  metric Hom-Jacobi-Jordan algebra.
	Let $ \gamma(x,y,z)=\gamma'(x,y)(z) $.
	By eq.\eqref{invDef}, we have $B\left([x,y]_{\mathfrak n},z \right)=B\left( x, [y,z]_{\mathfrak n}\right)   $. Thus, using \eqref{1structureMetric}, we have  
	$\gamma'(x,y)(z)=\gamma'(y,z)(x).$	       
	Hence $ \gamma(x,y,z)=\gamma(y,z,x) $. Moreover, by $[x,y]_{\mathfrak n}=[y,x]_{\mathfrak n}$, hence 	
	$ \gamma(x,y,z)=\gamma(y,x,z) $. By repeating this process, we obtain that $ \gamma\in S^3(J,\R). $\\	
	Now we prove that $ \gamma'(x,v)(y)= B_{\mathfrak a}\left(\theta(y,x),v \right) $.\\ By eq.\eqref{invDef}, we have $B\left([y,x]_{\mathfrak n},v \right)=B\left( y, [x,v]_{\mathfrak n}\right)   $.	Thus, using \eqref{1structureMetric}, \eqref{2structureMetric}  and \eqref{BJ*}, we have $ \gamma'(x,v)(y)= B_{\mathfrak a}\left(\theta(y,x),v \right). $\\
	For $ \gamma'(v,w) $, by  \eqref{invDef}, we have $B\left([x,v]_{\mathfrak n},w \right)=B\left( x, [v,w]_{\mathfrak n}\right)   $. Thus, using
	\eqref{2structureMetric}, \eqref{3structureMetric} and \eqref{BJ*}, we  have
	$\gamma'(v,w)(x)=	B_{\mathfrak a}\left(\rho(x)v,w \right)$	.  
	Hence 
	\begin{gather}\label{gammaprime2}
		\gamma'(v,w)=	B_{\mathfrak a}\left(\rho(\cdot)v,w \right).
	\end{gather}
	The proof is completed.
\end{proof}
\begin{defn}
	A Quadratic representation of a Hom-Jacobi-Jordan algebra 
	$ \left(J,[\cdot,\cdot],\alpha \right)  $ on a vector space $\mathfrak a$
	with respect to $ \beta\in End(\mathfrak a)$   consists of a triple $ \left(\rho,\mathfrak a,B_{\mathfrak a} \right)  $, where $ \rho \colon J\to End(\mathfrak a) $ is a representation of the 
	Hom-Jacobi-Jordan algebra $ J $ on $ \mathfrak a $ with respect to $ \beta\in End(\mathfrak a)$, and
	$ B_{\mathfrak a}\colon \mathfrak a\times \mathfrak a\to \R $ a symmetric bilinear form, satisfying, 
	\begin{align}
		B_{\mathfrak{a}}\left(  \rho(x)(v),w\right)& =B_{\mathfrak{a}}\left( v,\rho(x)(w)\right)\label{EqQrepr1}
	\end{align}	
	for all $x,y\in  J$  and $v,w\in  \mathfrak{a}$.
\end{defn}
\begin{lem}\label{1LemmaqSep18}
	If $ \left(\mathfrak n,[\cdot,\cdot]_{\mathfrak n},\alpha_{\mathfrak n} ,B\right)  $ is a 	  metric Hom-Jacobi-Jordan algebra, $ \left(\rho,\mathfrak a,B_{\mathfrak a} \right)  $ is quadratic representation of $J$ on $\mathfrak a$.  
\end{lem}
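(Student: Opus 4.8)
The plan is to verify the three requirements in the definition of a quadratic representation separately, noting that the first two are already in hand and that essentially all the work lies in the invariance identity \eqref{EqQrepr1}. First, that $\rho\colon J\to\mathrm{End}(\mathfrak a)$ is a representation of $J$ on $\mathfrak a$ with respect to $\beta$ is exactly the assertion recorded in the setup of this section via Theorem~\ref{structureMAY24}, applied to the abelian ideal $\mathfrak a$ of $M$ (this is where the decomposition $[\cdot,\cdot]_{M}=[\cdot,\cdot]+\rho+\theta$ comes from). Next, since $B$ is a symmetric bilinear form on $\mathfrak n$, its restriction $B_{\mathfrak a}$ to $\mathfrak a\times\mathfrak a$ is again symmetric and bilinear. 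So it remains only to establish
\[
B_{\mathfrak a}(\rho(x)v,w)=B_{\mathfrak a}(v,\rho(x)w)\qquad (x\in J,\ v,w\in\mathfrak a).
\]

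For this I would exploit the structure equations of Theorem~\ref{1Sep23} together with the pairing \eqref{BJ*}, working throughout in the triple decomposition $\mathfrak n=J\oplus\mathfrak a\oplus J^*$. Writing $[x,v]_{\mathfrak n}=\rho(x)v+B_{\mathfrak a}(\theta(\cdot,x),v)$, the first summand lies in $\mathfrak a$ and the second in $J^*$. Pairing against $w\in\mathfrak a$ and invoking \eqref{BJ*} — which forces $B(Z,w)=0$ for every $Z\in J^*$, since $w$ has zero $J$-component — annihilates the $J^*$-contribution, leaving
\[
B\big([x,v]_{\mathfrak n},w\big)=B_{\mathfrak a}(\rho(x)v,w).
\]
The identical computation applied to $[x,w]_{\mathfrak n}=\rho(x)w+B_{\mathfrak a}(\theta(\cdot,x),w)$, paired now with $v\in\mathfrak a$, gives $B\big(v,[x,w]_{\mathfrak n}\big)=B_{\mathfrak a}(v,\rho(x)w)$.

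The final step links the two expressions. Because $[\cdot,\cdot]_{\mathfrak n}$ is symmetric we have $[x,v]_{\mathfrak n}=[v,x]_{\mathfrak n}$, and the invariance \eqref{invDef} of $B$ on $\mathfrak n$ gives $B\big([v,x]_{\mathfrak n},w\big)=B\big(v,[x,w]_{\mathfrak n}\big)$. Combining,
\[
B_{\mathfrak a}(\rho(x)v,w)=B\big([x,v]_{\mathfrak n},w\big)=B\big([v,x]_{\mathfrak n},w\big)=B\big(v,[x,w]_{\mathfrak n}\big)=B_{\mathfrak a}(v,\rho(x)w),
\]
which is precisely \eqref{EqQrepr1}. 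I expect the only delicate point to be the bookkeeping with the decomposition $\mathfrak n=J\oplus\mathfrak a\oplus J^*$: one must check that the $J^*$-valued terms of the structure equations pair to zero against elements of $\mathfrak a$ under $B$, and that the restriction of $B$ to $\mathfrak a\times\mathfrak a$ genuinely is the form $B_{\mathfrak a}$ occurring in Theorem~\ref{1Sep23}. Once this is in place, the combination of bracket-symmetry and invariance closes the argument at once, without appeal to the Hom-invariance identity of Proposition~\ref{PropSep17}.
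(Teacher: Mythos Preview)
Your argument is correct and close in spirit to the paper's, both resting on the structure equations of Theorem~\ref{1Sep23} together with the symmetry of $[\cdot,\cdot]_{\mathfrak n}$. The only difference is which structure equation is invoked: the paper reads off $[v,w]_{\mathfrak n}=B_{\mathfrak a}(\rho(\cdot)v,w)$ directly and uses $[v,w]_{\mathfrak n}=[w,v]_{\mathfrak n}$ (together with symmetry of $B_{\mathfrak a}$) to get \eqref{EqQrepr1} in one line, whereas you work through $[x,v]_{\mathfrak n}$ and need the invariance \eqref{invDef} of $B$ as an extra ingredient. Both routes are sound; the paper's is marginally shorter since it avoids the appeal to invariance, while yours makes more explicit why the $J^*$-components drop out under pairing with $\mathfrak a$.
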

\begin{proof}
	By Lemma \ref{lemmaGamma}, we have $\gamma'(v,w)=	B_{\mathfrak a}\left(\rho(\cdot)v,w \right)  $. Using, in addition, the symmetry of the bracket $  [\cdot,\cdot]_{\mathfrak n}$, 
	we obtain 	$B_{\mathfrak a}\left(\rho(\cdot)v,w \right)=B_{\mathfrak a}\left(\rho(\cdot)w,v \right)$. This finishes the proof.	              
\end{proof}
Define a bilinear multiplication on $S^p(  J,\mathfrak  a)\times S^q(  J,\mathfrak  a)$ by
\begin{align}\label{Wedge}
	B_{\mathfrak a}(f\wedge g)(x_1,\cdots,x_{p+q})=\sum_{\sigma\in Sh(p,q)}
	B_{\mathfrak a}\left( f (x_{\sigma(1)},\cdots,x_{\sigma(p)}),g (x_{\sigma(p+1)},\cdots,x_{\sigma(p+q)})\right)                                                 
\end{align} 
where 
$ Sh(p,q) $ are the permutations
in $ \mathfrak{S}_{p+q} $ which are increasing on the first $p$ and the last $q$ elements.
\begin{prop}\label{1LemmaSep17}
	Let $ \left(\mathfrak n, [\cdot,\cdot]_{\mathfrak n},\alpha_{\mathfrak n},B \right)  $ be a 	  metric Hom-Jacobi-Jordan algebra. For $f, g\in C^2_{\alpha,\beta}(J,\mathfrak a)$, we have 
	\begin{equation*}
		B_{\mathfrak a}\left(f(\alpha(x),\alpha(y) ),g(z,t )  \right) = B_{\mathfrak a}\left(f(x,y ),g(\alpha(z),\alpha(t) )  \right)	                   
	\end{equation*}   	                                            
\end{prop}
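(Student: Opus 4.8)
The plan is to reduce the identity entirely to the $\beta$-symmetry of the bilinear form $B_{\mathfrak a}$ that has already been recorded in Proposition~\ref{PropSep17}. The essential observation is that membership of $f$ and $g$ in $C^2_{\alpha,\beta}(J,\mathfrak a)$ is nothing but the $2$-cochain compatibility $\beta\circ f=f\circ\alpha$ and $\beta\circ g=g\circ\alpha$; spelled out on arguments, these read $f(\alpha(x),\alpha(y))=\beta\bigl(f(x,y)\bigr)$ and $g(\alpha(z),\alpha(t))=\beta\bigl(g(z,t)\bigr)$. Since $f$ and $g$ take values in the abelian ideal $\mathfrak a$, every quantity appearing in the statement lives in $\mathfrak a$, so the relevant twist throughout is $\beta$, not the full $\alpha_{\mathfrak n}$.

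First I would rewrite the left-hand side by applying the cochain condition to $f$, replacing $f(\alpha(x),\alpha(y))$ by $\beta\bigl(f(x,y)\bigr)$, so that the expression becomes $B_{\mathfrak a}\bigl(\beta(f(x,y)),g(z,t)\bigr)$. Because both $f(x,y)$ and $g(z,t)$ lie in $\mathfrak a$, Proposition~\ref{PropSep17} applies verbatim with $v=f(x,y)$ and $w=g(z,t)$, letting me slide the twist $\beta$ across the form: $B_{\mathfrak a}\bigl(\beta(f(x,y)),g(z,t)\bigr)=B_{\mathfrak a}\bigl(f(x,y),\beta(g(z,t))\bigr)$. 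Finally I would rewrite $\beta\bigl(g(z,t)\bigr)$ as $g(\alpha(z),\alpha(t))$ via the cochain condition for $g$, which produces exactly the right-hand side and closes the chain of equalities.

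There is no genuine obstacle here: the proposition is a direct consequence of the Hom-invariance \eqref{alphadef} of $B$ once it is restricted to $\mathfrak a$ (Proposition~\ref{PropSep17}). The single point requiring a moment of care is to invoke Proposition~\ref{PropSep17} rather than the ambient relation \eqref{alphadef} on all of $\mathfrak n$ — one must be sure that $\beta$ is the correct twist, which is legitimate precisely because $f(x,y),g(z,t)\in\mathfrak a$ and $\alpha_{\mathfrak n}$ restricts to $\beta$ on $\mathfrak a$.
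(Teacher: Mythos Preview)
Your proof is correct and follows essentially the same route as the paper: both use the cochain condition $f\circ\alpha=\beta\circ f$ (and likewise for $g$) together with Proposition~\ref{PropSep17} to slide $\beta$ across $B_{\mathfrak a}$. Your version is slightly more explicit in spelling out each substitution, but the argument is the same.
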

\begin{proof}
	Since $f,g\in C^2_{\alpha,\beta}(J,\mathfrak a)$, we have, $ f\circ \alpha=\beta \circ f  $ and  $ g\circ \alpha=\beta \circ g   $.\\ According to  Proposition \ref{PropSep17}, we have 
	$B_{\mathfrak a}\left(\beta\circ f(x,y), g(x,z)\right)
	=B_{\mathfrak a}\left( f(x,y), \beta\circ g(x,z)\right)  $. 
	It follows that
	$B_{\mathfrak a}\left(f(\alpha(x),\alpha(y)), g(z,t)\right)
	=B_{\mathfrak a}\left( f(x,y), g(\alpha(z),\alpha(t))\right)$.	    
\end{proof}
\begin{prop}\label{Lemmaquadratic2cocycle}
	If $ \left(\mathfrak n,[\cdot,\cdot]_{\mathfrak n},\alpha_{\mathfrak n},B \right)  $ is a 	  metric Hom-Jacobi-Jordan algebra,	then
	the pair  $(\theta,\gamma) $ satisfies the following properties
	\begin{align*}
		d^2\theta(x,y,z)&=0\\
		d_{r}^{3}\gamma(x,y,z,\alpha(a)) 
		+\frac{1}{2}B_{\mathfrak a}\left(\theta\wedge ( \theta\circ \alpha)\right)(x,y,z,a)&=0	            
	\end{align*}		
	for all $x,y,z,a\in J$.		     
\end{prop}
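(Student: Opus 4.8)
The plan is to expand the Hom-Jacobi identity \eqref{jacobie} of $\mathfrak n$ on three arguments $x,y,z\in J$ and read off its $J$-, $\mathfrak a$- and $J^*$-components separately, using the bracket formulas \eqref{1d} of Theorem \ref{lemmaGamma}. Since $\alpha_{\mathfrak n}(x)=\alpha(x)$ for $x\in J$, and $[y,z]_{\mathfrak n}=[y,z]+\theta(y,z)+\gamma(y,z,\cdot)$ splits into a $J$-, $\mathfrak a$- and $J^*$-part, I would compute $[\alpha(x),[y,z]_{\mathfrak n}]_{\mathfrak n}$ by applying the relevant line of \eqref{1d} to each part. The $J$-part gives $[\alpha(x),[y,z]]$ together with $\theta(\alpha(x),[y,z])$ and $\gamma(\alpha(x),[y,z],\cdot)$; the $\mathfrak a$-part $\theta(y,z)$ gives $\rho(\alpha(x))\theta(y,z)$ and the functional $t\mapsto B_{\mathfrak a}(\theta(t,\alpha(x)),\theta(y,z))$; and the $J^*$-part $\gamma(y,z,\cdot)$, via symmetry of the bracket and $[Z,x']_{\mathfrak n}=Z([x',\cdot])$, gives the functional $t\mapsto\gamma(y,z,[\alpha(x),t])$.

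Summing cyclically over $(x,y,z)$ and setting each component to zero, the $J$-component is just the Hom-Jacobi identity of $J$ and carries no information. The $\mathfrak a$-component is $\sum_{\mathrm{cyc}}\big(\theta(\alpha(x),[y,z])+\rho(\alpha(x))\theta(y,z)\big)$, which by \eqref{1adjoint2} is exactly $d^2\theta(x,y,z)$; hence $d^2\theta=0$, the first asserted identity. (Alternatively, $d^2\theta=0$ is immediate because $\theta$ is the $2$-cocycle attached to the Hom-Jacobi-Jordan algebra $M$ by Theorem \ref{structureMAY24}.)

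The substantial part is the $J^*$-component. Evaluating the vanishing $J^*$-valued expression on a test element $t\in J$ and specializing $t=\alpha(a)$, I would collect the purely $\gamma$ terms and, using $\gamma\in S^3(J,\R)$ together with $[z,x]=[x,z]$, recognize their cyclic sum as precisely $d_r^3\gamma(x,y,z,\alpha(a))$ from \eqref{operatorCoadjoint3}. The remaining terms assemble into $\sum_{\mathrm{cyc}}B_{\mathfrak a}\big(\theta(\alpha(a),\alpha(x)),\theta(y,z)\big)$. Writing $\theta(\alpha(a),\alpha(x))=(\theta\circ\alpha)(a,x)$ and comparing with the definition \eqref{Wedge}, this cyclic sum matches exactly three of the six shuffle terms of $B_{\mathfrak a}\big(\theta\wedge(\theta\circ\alpha)\big)(x,y,z,a)$ (up to the symmetries of $B_{\mathfrak a}$ and of $\theta\circ\alpha$).

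The key step is to show the remaining three shuffle terms reproduce the same three, so that the wedge equals twice the cyclic sum. This is where Proposition \ref{1LemmaSep17} enters: its content for $f=g=\theta$ reads $B_{\mathfrak a}((\theta\circ\alpha)(p,q),\theta(r,s))=B_{\mathfrak a}(\theta(p,q),(\theta\circ\alpha)(r,s))$, and applying it together with the symmetry of $B_{\mathfrak a}$ identifies each of $B_{\mathfrak a}(\theta(x,a),(\theta\circ\alpha)(y,z))$, $B_{\mathfrak a}(\theta(y,a),(\theta\circ\alpha)(x,z))$, $B_{\mathfrak a}(\theta(z,a),(\theta\circ\alpha)(x,y))$ with one of the three terms already obtained. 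Hence $\sum_{\mathrm{cyc}}B_{\mathfrak a}(\theta(\alpha(a),\alpha(x)),\theta(y,z))=\frac{1}{2}B_{\mathfrak a}(\theta\wedge(\theta\circ\alpha))(x,y,z,a)$, and the $J^*$-component equation becomes the second asserted identity. The main obstacle I anticipate is purely bookkeeping: keeping the six shuffle terms, the three cyclic terms, and the two ways the $\alpha$ can sit inside $B_{\mathfrak a}$ correctly aligned, so that the factor $\frac{1}{2}$ comes out exactly right.
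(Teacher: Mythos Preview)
Your argument is correct and essentially coincides with the paper's proof. The only difference is packaging: the paper first invokes Theorem~\ref{structureMAY24} to view $\mathfrak n=M\oplus J^*$ as an extension with associated $2$-cocycle $\gamma'$, deduces $d^2\gamma'=0$, and then expands $d^2\gamma'(x,y,z)(t)$ using $[x,y]_M=[x,y]+\theta(x,y)$ together with the formulas $\gamma'(x,v)(y)=B_{\mathfrak a}(\theta(y,x),v)$ from Theorem~\ref{1Sep23}. You instead expand the Hom-Jacobi identity of $\mathfrak n$ directly on $x,y,z\in J$ and read off components---but this produces exactly the same six $\gamma$-terms (recognised as $d_r^3\gamma$) and the same three $B_{\mathfrak a}$-terms, and both arguments finish with Proposition~\ref{1LemmaSep17} to match the remaining three shuffle terms and extract the factor $\tfrac12$.
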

\begin{proof}
	We have, $ \left(M,[\cdot,\cdot]_{M},\alpha_{M} \right)  $ is a Hom-Jacobi-Jordan algebra, $\left( J^*,\rho^{*},\alpha^{*}\right) $ is a representation of the Hom-Jacobi-Jordan  algebra $ M $, 
	$ \mathfrak n=M\oplus J^* $ and $[\cdot,\cdot]_{\mathfrak n}=[\cdot,\cdot]_{M}+\gamma'$.
	By 	Theorem \ref{structureMAY24}, it follows    that $d^2\gamma'=0$. For all $x,y,z,a\in J$, we have
	\begin{align*}
		d^2\gamma'(x,y,z)(t) &=\gamma'([x,y]_{M},\alpha_{M}(z))(t)+\gamma'([x,z]_{M},\alpha_{M}(y))(t)+\gamma'([y,z]_{M},\alpha_{M}(x))(t)\\
		+&\rho'(\alpha_{M}(z))\gamma'(x,y)(t)+\rho'(\alpha_{M}(x))\gamma'(y,z)(t)+\rho'(\alpha_{M}(y))\gamma'(x,z)(t),
	\end{align*}
	where $ t=\alpha(a) $. Then, by $ [x,y]_{M}=[x,y]+\theta(x,y) $, $ \gamma'(x,v)(y)= B_{\mathfrak a}\left(\theta(y,x),v \right) $ and $ \gamma'(v,w)=	B_{\mathfrak a}\left(\rho(\cdot)v,w \right) $, we obtain
	\begin{align}
		d^2\gamma'(x,y,z)(t) &	=\gamma([x,y],\alpha(z),t)
		+	\gamma([x,z],\alpha(y),t)
		+	\gamma([y,z],\alpha(x),t)\label{Asep17}\\
		+&\gamma\left( x,y,[\alpha(z),t] \right) 
		+\gamma\left( y,z,[\alpha(x),t] \right) 
		+\gamma\left( x,z,[\alpha(y),t] \right)\label{Bsep17} \\
		+&B_{\mathfrak a}\left(\theta(\alpha(a),\alpha(z)), \theta(x,y)\right) 
		+B_{\mathfrak a}\left(\theta(\alpha(a),\alpha(y)), \theta(x,z)\right) \nonumber\\
		+&B_{\mathfrak a}\left(\theta(\alpha(a),\alpha(x)), \theta(y,z)\right).\nonumber
	\end{align}
	Using $\eqref{Asep17}+\eqref{Bsep17}=d_{r}^{3}\gamma(x,y,z,t)$ and Proposition \ref{1LemmaSep17}, one gets
	\begin{gather*}
		d^2\gamma'(x,y,z)(t)	=d_{r}^{3}\gamma(x,y,z,t) 
		+\frac{1}{2}B_{\mathfrak a}\left(\theta\wedge ( \theta\circ \alpha)\right)(x,y,z,a). 
	\end{gather*}
	Hence, $d_{r}^{3}\gamma(x,y,z,\alpha(a)) 
	+\frac{1}{2}B_{\mathfrak a}\left(\theta\wedge ( \theta\circ \alpha)\right)(x,y,z,a)=0.  $             
\end{proof}
Bringing these results, we provide the following definitions.
\begin{defn}The pair
	$(\theta,\gamma) $ is called a  quadratic  $2$-cochain if $ \theta\in C_{\alpha,\beta}^2(J,\mathfrak a) $ and $\gamma\in C^3_{r}(J,\R)$.	
	Denote $ C_{Q}^2(  J,\mathfrak  a) $
	the set of quadratic  $2$-cochains.\\ 
	We define a  map by $ d_{Q}^{2}\colon C_{Q}^2( J,\mathfrak  a)\to C^3_{r}(  J,\mathfrak  a)\times C^4(  J,\R) $ by 
	\begin{gather}
		d^2_{Q}\left( \theta,\gamma\right)(x,y,z)(t)\nonumber\\ =\left(d^2\theta(x,y,z),d^3_{r}\gamma(x,y,z,t)
		+\frac{1}{2}B_{\mathfrak a}\left(\theta\wedge(\theta \circ \alpha ) \right) (x,y,z,a)\right)\label{2Qcobrd}                   
	\end{gather}  
	where $ t=\alpha(a) $.\\				
	$(\theta,\gamma) $ is called a  quadratic $2$-Hom-cocycle of $  J $ on $ \mathfrak a $ if and only if 
	$d^2_{Q}(\theta,\gamma)=0 $.
	We denote  $   Z^2_{Q}( J,\mathfrak a) $ the group generated by any  quadratic $2$-cocycle on $ \mathfrak a $.	          
\end{defn}

\subsection{Construction of a $1$-Coboundary operators for metric  Hom-Jacobi-Jordan  algebra}
We aim in this section to construct a linear map $d^1_Q$ satisfying $d^2_Q\circ d^1_Q=0$ and then the second cohomology group 
of metric Hom-Jacobi-Jordan-algebra.
\begin{prop}\label{17OCTpropWedge}
	Let $ f\in C_{\alpha,\beta}^2(J,\mathfrak a) $ and $ g \in C_{\alpha,\beta}^1(J,\mathfrak a) $.  We have	
	\begin{align*}
		&d^{3}_{r} B_{\mathfrak a}(f\wedge g)(x,y,z,t)=B\left( d^2f(x,y,z),g(t)\right) \\
		+&	B\left(d^2_{c} f(x,y,t),g(z) \right)	
		+B\left(d^2_{c} f(x,z,t),g(y)  \right)
		+B\left(d^2_{c} f(y,z,t),g(x)  \right)\\
		+&B\left(  (f\circ \alpha)\wedge d^1g        \right)(x,y,z,a)	
	\end{align*}
	for any $x,y, z,a\in J$ and $t=\alpha(a) $.
\end{prop}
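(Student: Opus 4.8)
The plan is to establish this as a Leibniz-type (product) identity for the coboundary operators paired against the symmetric form $B_{\mathfrak a}$, proved by direct expansion of both sides and term-by-term matching. First I would expand the left-hand side using the definition of $d^3_r$ in \eqref{operatorCoadjoint3} together with the explicit form of the wedge \eqref{Wedge}, which for $p=2$, $q=1$ reads $B_{\mathfrak a}(f\wedge g)(a,b,c)=B_{\mathfrak a}(f(a,b),g(c))+B_{\mathfrak a}(f(a,c),g(b))+B_{\mathfrak a}(f(b,c),g(a))$. This produces eighteen scalar terms of the shape $B_{\mathfrak a}(f(\bullet,\bullet),g(\bullet))$, which I would sort according to the argument carried by $g$: the value $g(t)$ (three terms), the values $g(x),g(y),g(z)$ at the plain variables (six terms), and the remaining nine terms, in which $g$ is evaluated at $\alpha(\cdot)$, at a bracket $[\cdot,\cdot]$, or at $[\alpha(\cdot),t]$.

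The three groups on the right-hand side match these three families. The three left-hand terms with argument $g(t)$ already reproduce, by the symmetry of $f\in S^2(J,\mathfrak a)$, the bracket part $f(\alpha(x),[y,z])+\cdots$ of $B_{\mathfrak a}(d^2f(x,y,z),g(t))$ from \eqref{1adjoint2}; the missing part $B_{\mathfrak a}(\rho(\alpha(x))f(y,z),g(t))+\cdots$ is supplied by the self-adjointness of $\rho$ in \eqref{EqQrepr1}, which I use to rewrite it as $B_{\mathfrak a}(f(y,z),\rho(\alpha(x))g(t))+\cdots$. These latter terms are produced on the left by expanding $g([\alpha(x),t])$ through $d^1g$: writing $g([\alpha(x),t])=d^1g(\alpha(x),t)+\rho(\alpha(x))g(t)+\rho(t)g(\alpha(x))$ from \eqref{adjoint1} isolates exactly the needed $\rho(\alpha(x))g(t)$ pieces, while the residual $\rho(t)g(\alpha(x))$ pieces feed the $d^2_c$ group and the $d^1g$ pieces feed the wedge term. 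The six plain-variable terms assemble, again after moving the $\rho$-factors across $B_{\mathfrak a}$ by \eqref{EqQrepr1}, into $B_{\mathfrak a}(d^2_cf(x,y,t),g(z))+B_{\mathfrak a}(d^2_cf(x,z,t),g(y))+B_{\mathfrak a}(d^2_cf(y,z,t),g(x))$ via the definition \eqref{adjoint2}.

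To handle the wedge term $B_{\mathfrak a}((f\circ\alpha)\wedge d^1g)(x,y,z,a)$, whose six $Sh(2,2)$-contributions carry $d^1g$ on blocks either containing or avoiding $a$, I would use multiplicativity \eqref{multiplicative}, giving $[\alpha(z),t]=[\alpha(z),\alpha(a)]=\alpha([z,a])$, so that the blocks avoiding $a$ match the plain-bracket pieces $B_{\mathfrak a}(f(\alpha(z),t),g([x,y]))$ arising on the left directly. For the blocks containing $a$, the pieces $d^1g(\alpha(x),t)=\beta\big(d^1g(x,a)\big)$ pair with the complementary factor $f(y,z)$; one verifies $d^1g\in C^2_{\alpha,\beta}(J,\mathfrak a)$ using \eqref{rep1} and multiplicativity, and then Proposition \ref{1LemmaSep17} (equivalently, the self-adjointness of $\beta$ from Proposition \ref{PropSep17} together with $\beta\circ f=f\circ\alpha$) transfers the $\alpha$ across $B_{\mathfrak a}$ to produce $B_{\mathfrak a}((f\circ\alpha)(y,z),d^1g(x,a))$, i.e.\ exactly the remaining shuffle terms. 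Collecting all contributions and pairing the residual $\rho(t)g(\alpha(\cdot))$ terms with the matching $d^2_c$ pieces completes the verification.

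The main obstacle is the bookkeeping rather than any single computation: the identity closes only after every $\rho$- and $\beta$-factor has been moved to the correct slot of $B_{\mathfrak a}$ via \eqref{EqQrepr1} and Proposition \ref{PropSep17}, and after each $[\alpha(\cdot),t]$ has been rewritten as $\alpha([\cdot,a])$ by multiplicativity so that the $d^1g$ blocks of the wedge become recognizable. The delicate point is thus the systematic use of self-adjointness to transfer the representation and twist maps between the two arguments of $B_{\mathfrak a}$, so that the $d^2f$, $d^2_cf$, and $d^1g$ contributions separate cleanly; once the eighteen terms are organized by the argument of $g$ and these transfers are applied, the equality reduces to a finite term-by-term match.
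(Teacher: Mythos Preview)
Your proposal is correct and follows essentially the same route as the paper: both proofs expand $d^3_r\big(B_{\mathfrak a}(f\wedge g)\big)$ into eighteen terms and regroup them into the $d^2f$, $d^2_cf$ and $(f\circ\alpha)\wedge d^1g$ blocks using the cochain conditions $f\circ\alpha=\beta\circ f$, $g\circ\alpha=\beta\circ g$, the self-adjointness of $\rho$ and $\beta$ (Eq.~\eqref{EqQrepr1} and Proposition~\ref{PropSep17}), and multiplicativity $[\alpha(\cdot),t]=\alpha([\cdot,a])$. The only difference is organizational: the paper groups by the two halves of $d^3_r$ (the $\gamma([x,y],\alpha(z),t)$ block versus the $\gamma(x,y,[\alpha(z),t])$ block) and then inserts and subtracts the $\rho$-contributions of $d^2f$ and $d^2_cf$, whereas you sort from the outset by the argument carried by $g$ and expand $g([\alpha(x),t])$ through $d^1g$ to generate the needed $\rho$-pieces; both bookkeepings lead to the same term-by-term match.
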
 
\begin{proof}
	Let $ f\in C_{\alpha,\beta}^2(J,\mathfrak a) $ and $ g \in C_{\alpha,\beta}^1(J,\mathfrak a) $.
	We take 	    
	$\gamma=B_{\mathfrak a}\left(f\wedge g\right) $.\\
	For any $x,y, z,a\in J$ 
	and $t=\alpha(a) $,	
	we have
	\begin{align}
		&d^{3}_{r}\gamma(x,y,z,t)\nonumber\\
		&=		\gamma([x,y],\alpha(z),t)
		+\gamma([x,z],\alpha(y),t)
		+\gamma([y,z],\alpha(x),t)\nonumber\\		
		+& \gamma\left( x,y,[\alpha(z),t] \right)
		+\gamma\left( y,z,[\alpha(x),t] \right)
		+\gamma\left( x,z,[\alpha(y),t] \right)\nonumber\\
		&=\circlearrowleft_{x,y,z}\big( 	\gamma([x,y],\alpha(z),t)
		+\circlearrowleft_{x,y,z}  \gamma\left( x,y,[\alpha(z),t] \right)\big) \nonumber\\
		&=\circlearrowleft_{x,y,z}\big( B(f([x,y],\alpha(z)),g(t))
		+B(f([x,y],t),g(\alpha(z)))big) 
		+B(f(\alpha(a),\alpha(z)),g([x,y]))\label{1cohomologyOct14}\\
		+&\circlearrowleft_{x,y,z}\big( B(f(x,y),g([\alpha(z),t]))
		+B\left(f(x,[\alpha(z),t]),g(y)
		+B\left(f(y,[\alpha(z),t]),g(x)\right)\right)\big) \label{2cohomologyOct14}		
	\end{align}
	where $\circlearrowleft_{x,y,z}  $ denotes summation over the cyclic permutation on $ x,y,z $.\\
	By Proposition \ref{PropSep17} and $ g\in C_{\alpha,\beta}^{1}(J,\mathfrak a) $, we have
	\begin{equation*}
		\circlearrowleft_{x,y,z}	B(f([x,y],t),g(\alpha(z)))=\circlearrowleft_{x,y,z}B\left(\beta( f([x,y],t)),g(z)\right) 
	\end{equation*}
	Hence, 
	\begin{align*}
		\eqref{1cohomologyOct14}
		&=B\Big(d^2f(x,y,z),g(t)\Big)
		-\circlearrowleft_{x,y,z}B\Big(\rho\left( \alpha(x)\right) f(y,z)
		,g(t)\Big)\\
		+&\circlearrowleft_{x,y,z}B\left(\beta( f([x,y],t)),g(z)\right) 
		+\circlearrowleft_{x,y,z}B(f(\alpha(a),\alpha(z)),g([x,y])). 
	\end{align*}		
	For \eqref{2cohomologyOct14}, we have
	\begin{gather*}
		\circlearrowleft_{x,y,z}	B\left( f(x,y),g([\alpha(z),t])\right) 
		=\circlearrowleft_{x,y,z}B\left( f(\alpha(x)), \alpha(y),g([z,a]))\right) .
	\end{gather*}	
	Then,
	\begin{gather*}
		\eqref{1cohomologyOct14}+\eqref{2cohomologyOct14}
		=B\Big(d^2f(x,y,z),g(t)\Big)
		-\circlearrowleft_{x,y,z}B\Big(\rho\left( \alpha(x)\right) f(y,z)
		,g(t)\Big)\\
		+\circlearrowleft_{x,y,z}B\left(\beta( f([x,y],t)),g(z)\right)
		+B\left(f(x,[\alpha(z),t]),g(y)\right)
		+B\left(f(y,[\alpha(z),t]),g(x)\right)\\ 
		+\circlearrowleft_{x,y,z}B(f(\alpha(a),\alpha(z)),g([x,y]))
		+\circlearrowleft_{x,y,z}B(f(\alpha(x), \alpha(y),g([z,a])) .
	\end{gather*}		
	On the other hand, we have
	\begin{align*}
		&\beta \left( f([x,y],t)\right) 			 
		+f(y,[\alpha(x),t])		 
		+f(x,[\alpha(y),t])\\
		&=d^2_{c} f(y,z,t)
		-\rho(y)f( \alpha(z),t  )	
		-\rho(y)f( \alpha(z),t  )
		-\beta \left( \rho(t)f(y,z)     \right),	              
	\end{align*}
	and
	\begin{align*}
		B_{\mathfrak a}\left( \rho(y)f( \alpha(z),t  ),g(x)\right)
		&=B_{\mathfrak a}\left( f( \alpha(z),t  ),\rho(y)g(x)\right) \\
		&=B_{\mathfrak a}\left( f\circ \alpha( z,a ),\rho(y)g(x)\right).
	\end{align*}
	Moreover, we have
	\begin{align*}
		B_{\mathfrak a}\left(\beta \left( \rho(t)f(y,z)     \right)	,g(x)\right)
		&=B_{\mathfrak a}\left( \rho(\alpha(a))f(y,z)    	,\beta \left(g(x) \right)\right)\\
		&=B_{\mathfrak a}\left( f(y,z)    	,\rho(\alpha(a))\beta \left(g(x) \right)\right)\\
		&=B_{\mathfrak a}\left( f(y,z)    	,\beta\left( \rho(a)g(x) \right)\right)\\
		&=B_{\mathfrak a}\left(\beta\left( f(y,z)    \right)	, \rho(a)g(x) \right)\\
		&=B_{\mathfrak a}\left( f\circ \alpha(y,z)    	, \rho(a)g(x) \right).
	\end{align*}	
	Therefore, by 
	straightforward computations, we obtain
	\begin{align*}
		d^{3}_{r}\gamma(x,y,z,t)	&=B(d^2f(x,y,z)),g(t))\\
		+&B(d^2_{c}f(x,y,t)),g(z))+B(d^2_{c}f(x,z,t)),g(y))
		+B(d^2_{c}f(y,z,t)),g(x))
		\\
		+&B((f\circ \alpha) \wedge d^{1}_{c} g  )(x,y,z,a).
	\end{align*}	        
\end{proof}	
\begin{rem}
	If $\alpha=id_J$ and $ \beta=Id_{\mathfrak a} $, we have
	\begin{gather*}
		d^{3}_{r}\gamma	=B(d^2f\wedge g)+B\left(f\wedge d^1g \right) 
	\end{gather*}	                     
\end{rem}
\begin{lem}
	Let $ (\theta,\gamma) $ and  $ (\theta',\gamma') $  two quadratic $2$-cochains.
	\begin{gather}
		d^2_{Q}	(\theta,\gamma)=d^2_{Q}	(\theta',\gamma')	\iff  \left\{ \begin{array}{ll}
			\theta'=\theta+d^1\tau \\
			d_{r}^3\gamma'=	d_{r}^3\gamma  -\frac{1}{2}d_{r}^3	B(\tau\wedge d^1\tau )
			-d_{r}^3 B( \tau  \wedge  \theta  )		
			\label{Jun13}	\\
		\end{array} \right.
	\end{gather}
\end{lem}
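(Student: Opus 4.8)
The plan is to prove the equivalence by separating the two components of the operator $d^2_Q$ and reading a single wedge identity in both directions. Throughout write $\phi:=d^1\tau\in C^2_{\alpha,\beta}(J,\mathfrak a)$ for the candidate $1$-cochain $\tau\in C^1_{\alpha,\beta}(J,\mathfrak a)$. Since
\[ d^2_Q(\theta,\gamma)=\Big(d^2\theta,\ d_r^3\gamma+\tfrac12 B_{\mathfrak a}\big(\theta\wedge(\theta\circ\alpha)\big)\Big), \]
the equality $d^2_Q(\theta,\gamma)=d^2_Q(\theta',\gamma')$ is equivalent to the conjunction of the first-slot equation $d^2\theta=d^2\theta'$ and the second-slot equation $d_r^3\gamma'-d_r^3\gamma=\tfrac12 B_{\mathfrak a}(\theta\wedge(\theta\circ\alpha))-\tfrac12 B_{\mathfrak a}(\theta'\wedge(\theta'\circ\alpha))$, and I treat the two slots separately.

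\emph{First slot.} The equation $d^2\theta=d^2\theta'$ says precisely $d^2(\theta'-\theta)=0$. In the direction ($\Leftarrow$) this is immediate from $\theta'=\theta+d^1\tau$ together with $d^2\circ d^1=0$ (Theorem \ref{cobord}). In the direction ($\Rightarrow$) it exhibits $\theta'-\theta$ as a $2$-cocycle; to put it in the stated form $\theta'=\theta+d^1\tau$ I appeal to the coboundary description of equivalent extensions underlying Theorem \ref{bijection}, which identifies the relevant difference with $d^1\tau$ for a suitable $\tau$. This fixes $\tau$ and reproduces the first line of the right-hand system.

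\emph{Second slot --- the heart of the proof.} Expanding the quadratic term bilinearly and substituting $\theta'=\theta+\phi$ gives, as $4$-forms,
\[ B_{\mathfrak a}(\theta'\wedge(\theta'\circ\alpha))-B_{\mathfrak a}(\theta\wedge(\theta\circ\alpha))=B_{\mathfrak a}(\theta\wedge(\phi\circ\alpha))+B_{\mathfrak a}(\phi\wedge(\theta\circ\alpha))+B_{\mathfrak a}(\phi\wedge(\phi\circ\alpha)). \]
The two cross terms coincide: applying the $\alpha$-transfer of Proposition \ref{1LemmaSep17} inside each shuffle summand rewrites $B_{\mathfrak a}(\theta\wedge(\phi\circ\alpha))$ as $B_{\mathfrak a}((\theta\circ\alpha)\wedge\phi)$, which equals $B_{\mathfrak a}(\phi\wedge(\theta\circ\alpha))$ by the symmetry $B_{\mathfrak a}(f\wedge g)=B_{\mathfrak a}(g\wedge f)$. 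Hence the difference equals $2B_{\mathfrak a}(\phi\wedge(\theta\circ\alpha))+B_{\mathfrak a}(\phi\wedge(\phi\circ\alpha))$. On the other side I feed $f=\theta,\ g=\tau$ and $f=\phi,\ g=\tau$ into Proposition \ref{17OCTpropWedge}: its trailing wedge term yields exactly $B_{\mathfrak a}(\phi\wedge(\theta\circ\alpha))$ and $B_{\mathfrak a}(\phi\wedge(\phi\circ\alpha))$ respectively, so that $d_r^3 B_{\mathfrak a}(\tau\wedge\theta)$ contributes $B_{\mathfrak a}(\phi\wedge(\theta\circ\alpha))$ and $d_r^3 B_{\mathfrak a}(\tau\wedge\phi)$ contributes $B_{\mathfrak a}(\phi\wedge(\phi\circ\alpha))$. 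Matching with the coefficients $1$ and $\tfrac12$ of the right-hand system reproduces the form $2B_{\mathfrak a}(\phi\wedge(\theta\circ\alpha))+B_{\mathfrak a}(\phi\wedge(\phi\circ\alpha))$, so the second line follows; the converse is the same chain read backwards.

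The main obstacle is controlling the residual terms that Proposition \ref{17OCTpropWedge} produces alongside the wedge terms. For $f=\phi=d^1\tau$ the attached $B_{\mathfrak a}(d^2\phi,\tau)$ and $d^2_c\phi$ contributions vanish by $d^2\circ d^1=0$ and the identity $d^2_c\circ d^1_c=0$ established earlier. What remains is the purely $\theta$-dependent residual $B_{\mathfrak a}(d^2\theta(x,y,z),\tau(\alpha(a)))$ together with the three $d^2_c\theta$ terms paired with $\tau$; these need not vanish individually for a non-cocyclic $\theta$, so the delicate point is to show that their cyclic sum is zero. I expect this to follow from the $B_{\mathfrak a}$-adjunction between $d^2$ and $d^2_c$ (the same mechanism driving Proposition \ref{17OCTpropWedge}) combined with the symmetry $\gamma\in S^3(J,\R)$. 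Carefully tracking the shuffle signs and the cyclic indices in this cancellation is the only genuine computation, after which the stated coefficients $\tfrac12$ and $1$ emerge automatically.
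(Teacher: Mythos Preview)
Your overall strategy---splitting $d^2_Q$ into its two components, expanding $B_{\mathfrak a}(\theta'\wedge(\theta'\circ\alpha))$ bilinearly after substituting $\theta'=\theta+d^1\tau$, identifying the two cross terms via Proposition~\ref{1LemmaSep17}, and then invoking Proposition~\ref{17OCTpropWedge} with $(f,g)=(d^1\tau,\tau)$ and $(f,g)=(\theta,\tau)$---is exactly the route the paper takes.

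The genuine gap is your final paragraph. When you apply Proposition~\ref{17OCTpropWedge} with $f=\theta$, the residual terms $B_{\mathfrak a}(d^2\theta(x,y,z),\tau(t))$ and the three $B_{\mathfrak a}(d^2_c\theta(\cdot,\cdot,\cdot),\tau(\cdot))$ do \emph{not} cancel by any cyclic symmetry or $B_{\mathfrak a}$-adjunction; your expectation that they vanish is speculative and unsupported. The paper does not attempt any such cancellation: it simply asserts $d^2\theta=0$ and $d^2_c\theta=0$ at this step and uses them to kill those terms outright. Whether these hypotheses are implicit in the surrounding context or constitute an oversight in the lemma statement is a separate issue, but the paper's argument depends on them. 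So rather than trying to manufacture a cancellation the paper never claims, you should follow the paper and invoke those vanishing conditions directly; without them Proposition~\ref{17OCTpropWedge} does not reduce to the clean form $d_r^3 B(\theta\wedge\tau)=B((\theta\circ\alpha)\wedge d^1\tau)$ that you need.

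A smaller point on the first slot: the passage from $d^2(\theta'-\theta)=0$ to $\theta'-\theta=d^1\tau$ is not furnished by Theorem~\ref{bijection} as you suggest---it is the assertion that a given $2$-cocycle is a $2$-coboundary, which is not automatic. The paper makes the same bare assertion (``Eq.~\eqref{conditionJune13} implies that there exist a $1$-Hom-cochain $\tau$\ldots''), so you are not diverging from it here, but your appeal to Theorem~\ref{bijection} does not actually justify the step.
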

\begin{proof}
	Let $ (\theta,\gamma) $ and  $ (\theta',\gamma') $  two quadratic $2$-cochain such that $ d^2_{Q}	(\theta,\gamma)=d^2_{Q}	(\theta',\gamma') $. Then,
	\begin{gather}
		d^2\theta=d^2\theta'\label{conditionJune13}
	\end{gather}
	and
	\begin{gather}
		d_{r}^3\gamma+\frac{1}{2}B(\theta\wedge(\theta\circ  \alpha)) \label{1conditionJune13}  =d_{r}^3\gamma'+\frac{1}{2}B(\theta'\wedge(\theta'\circ  \alpha )). 
	\end{gather}
	Eq.\eqref{conditionJune13} implies that there exist a  $1$-Hom-cohain $\tau$ satisfies 
	\begin{gather}\label{tau}
		\theta'=\theta+d^1\tau	
	\end{gather}
	Thus, using \eqref{1conditionJune13}, we have
	\begin{align}
		d^3_{r}\gamma &=d^3_{r}\gamma'+\frac{1}{2}B\Big(\left( \theta+d^1\tau\right) \wedge(\left( \theta+d^1\tau\right) \circ  \alpha )\Big) - \frac{1}{2}B(\theta\wedge(\theta\circ  \alpha))\nonumber\\
		&=d^3\gamma' 
		+\frac{1}{2}B\Big( \theta \wedge(d^1\tau \circ  \alpha )\Big) 
		+\frac{1}{2}B\Big( d^1\tau \wedge( \theta\circ  \alpha )\Big) 
		+\frac{1}{2}B\Big(d^1\tau  \wedge(d^1\tau\circ  \alpha )\Big).\label{this}
	\end{align}
	Hence, by Proposition \ref{1LemmaSep17}, we obtain $B\left(  \theta \wedge(d^1\tau \circ  \alpha )\right) =B\left(  d^1\tau \wedge( \theta\circ  \alpha )\right)   $.
	Therefore,
	\begin{equation*}	
		d^3\gamma 
		=d^3\gamma'
		+B( d^1\tau  \wedge  (\theta\circ  \alpha )  )	
		+\frac{1}{2}B( d^1\tau  \wedge   (d^1\tau\circ  \alpha )).
	\end{equation*}
	Replacing  $f$, $g$ by $ d^1\tau $,  $ \tau $, in Proposition \ref{17OCTpropWedge} and since
	By $ d^2\circ  d^1(\tau)=0 $, 
	we have
	\begin{equation}\label{d3B17oct}
		d^{3}_{r}B(d^1_{c}\tau\wedge \tau)(x,y,z,t)	=B((d^1_{c}\tau\circ \alpha) \wedge d^{1}_{c} \tau  )(x,y,z,a).
	\end{equation}
	Replacing  $f$, $g$ by $ \theta $, $ \tau $,
	in Proposition \ref{17OCTpropWedge}, by that fact 
	$ d^2\theta=0 $ and that $ d^2_{c}\theta=0 $, we have
	\begin{equation*}
		d^{3}_{r}B(\theta\wedge \tau)(x,y,z,t)	=B((\theta\circ \alpha) \wedge d^{1}_{c} \tau  )(x,y,z,a).
	\end{equation*}	
	Therefore,
	\begin{equation*}
		d^3\gamma 
		=d^3\gamma'+d^{3}_{r}(\theta\wedge \tau)+\frac{1}{2}
		d^{3}_{r}(d^1_{c}\tau\wedge \tau)
	\end{equation*}
	Hence,
	\begin{gather}
		d^3\gamma'=	d^3\gamma  -\frac{1}{2}d^3	B(\tau\wedge d^1\tau )
		-d^3B\left(\theta\wedge \tau \right)	.
	\end{gather}	                         
\end{proof}	  
\begin{thm}\label{17octequiv}
	If $d_{c}^2\theta=0$, there exists $ \sigma \in C^3(J,\R) $ such that
	\begin{gather*}
		d^2_{Q}	(\theta,\gamma)=d^2_{Q}	(\theta',\gamma')
		\iff  \left\{ \begin{array}{ll}
			\theta'=\theta+d^1\tau \\
			\gamma'=	\gamma+d^2\sigma  -	B\left( \tau\wedge (\theta+\frac{1}{2}d^1\tau )\right) 	
			\\
		\end{array} \right.
	\end{gather*}			                    
\end{thm}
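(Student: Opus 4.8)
The plan is to read this statement as a sharpening of the preceding lemma \eqref{Jun13}, which already characterises $d^2_Q(\theta,\gamma)=d^2_Q(\theta',\gamma')$ by the two conditions $\theta'=\theta+d^1\tau$ and the $d_r^3$-level identity $d_r^3\gamma'=d_r^3\gamma-\frac12 d_r^3B(\tau\wedge d^1\tau)-d_r^3B(\tau\wedge\theta)$. By bilinearity of the wedge product, $B\left(\tau\wedge(\theta+\frac12 d^1\tau)\right)=B(\tau\wedge\theta)+\frac12 B(\tau\wedge d^1\tau)$, so this identity is nothing but $d_r^3\nu=0$ for $\nu:=\gamma'-\gamma+B\left(\tau\wedge(\theta+\frac12 d^1\tau)\right)$. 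Hence the whole content of the theorem is to trade the \emph{closedness} $d_r^3\nu=0$ for the \emph{exactness} $\nu=d^2\sigma$. Note that the hypothesis $d_c^2\theta=0$ is precisely what makes \eqref{Jun13} available, since its derivation through Proposition \ref{17OCTpropWedge} uses both $d^2\theta=0$ and $d_c^2\theta=0$.

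For the implication $(\Leftarrow)$ I would assume $\theta'=\theta+d^1\tau$ and $\gamma'=\gamma+d^2\sigma-B\left(\tau\wedge(\theta+\frac12 d^1\tau)\right)$ and simply apply $d_r^3$. By Theorem \ref{CohomologyReal} we have $d_r^3\circ d^2=0$, so the term $d_r^3 d^2\sigma$ vanishes and I recover exactly $d_r^3\gamma'=d_r^3\gamma-d_r^3B\left(\tau\wedge(\theta+\frac12 d^1\tau)\right)$, i.e. the $d_r^3$-level identity of \eqref{Jun13}. Combined with $\theta'=\theta+d^1\tau$, the preceding lemma then returns $d^2_Q(\theta,\gamma)=d^2_Q(\theta',\gamma')$. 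This direction is routine: it uses only $d_r^3\circ d^2=0$ and the lemma.

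For the implication $(\Rightarrow)$ I would feed $d^2_Q(\theta,\gamma)=d^2_Q(\theta',\gamma')$ into \eqref{Jun13}, obtaining $\theta'=\theta+d^1\tau$ together with $d_r^3\nu=0$ for the $\nu$ above. It then suffices to produce $\sigma$ with $\nu=d^2\sigma$, for then $\gamma'=\gamma+d^2\sigma-B\left(\tau\wedge(\theta+\frac12 d^1\tau)\right)$ is the asserted form. I expect this last step to be the main obstacle: Theorem \ref{CohomologyReal} supplies only $\im\, d^2\subseteq\ker d_r^3$, whereas here I need the opposite inclusion, namely that the $d_r^3$-closed cochain $\nu$ is a $d^2$-coboundary. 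I would therefore close the argument by appealing to exactness of the real coadjoint complex at this degree (equivalently, the vanishing of the corresponding third cohomology), which furnishes the required $\sigma\in C^3(J,\R)$; alternatively one exhibits $\sigma$ directly from a contracting homotopy for $d_r^\bullet$. Once $\sigma$ is in hand both implications are complete, and the only non-formal inputs beyond \eqref{Jun13} are $d_r^3\circ d^2=0$ and this single exactness statement.
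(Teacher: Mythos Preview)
Your reading is exactly the one the paper intends: the theorem is presented without its own proof, immediately after lemma \eqref{Jun13}, and is meant to be the obvious upgrade of that lemma obtained by replacing the $d_r^3$-level identity by an explicit antiderivative. Your $(\Leftarrow)$ argument via $d_r^3\circ d_r^2=0$ (Theorem \ref{CohomologyReal}) and your reduction of $(\Rightarrow)$ to ``$d_r^3\nu=0\Rightarrow \nu=d_r^2\sigma$'' are precisely what the paper has in mind.

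You are also right to flag the exactness step as the only nontrivial point. The paper does not prove $\ker d_r^3\subseteq \im\, d_r^2$ anywhere; Theorem \ref{CohomologyReal} gives only the inclusion $\im\, d_r^2\subseteq \ker d_r^3$. So the gap you identify in your own argument is equally a gap in the paper: without an additional hypothesis (vanishing of the relevant degree-$3$ cohomology for the real coadjoint complex) or an explicit homotopy, the $(\Rightarrow)$ direction is not fully justified. Note also the indexing slip in the statement: $\sigma$ should live in $C_r^2(J,\R)$ so that $d_r^2\sigma\in C_r^3(J,\R)$ matches $\gamma$, not in $C^3(J,\R)$ as written. Your proposal is as complete as the paper's own treatment, and more honest about where the missing ingredient lies.
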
						
Using the previous observations, we give the following definitions. 		
\begin{defn}	
	Define a linear map $ d^1_{Q}\colon C^1_{Q}(\mathfrak{J},\mathfrak a)\to C^2_{Q}(\mathfrak{J},\mathfrak a) $  by 
	\begin{gather*}
		d^1_{Q}(\tau,\sigma)=\big(d^1\tau,d_{r}^2\sigma-\frac{1}{2}	B\left( \tau\wedge  d^1\tau \right)\big) .                  
	\end{gather*}	 	 						
	A  quadradic $ 2 $-cochain  $(\theta, \gamma) $  is called quadradic  $2$-cobord  if and only if there exist a quadratic $1$-cochain $(\tau,\sigma)$ satisfies 
	$  d^1_{Q}(\tau,\sigma)=(\theta, \gamma) $. Denote $ B^{2}_{Q}\left(\mathfrak{J},\mathfrak a \right)  $ the groupe generated by the set of all  quadratic $2$-cobords.                                  
\end{defn}
\begin{prop}
	Any quadratic $2$-cobord is a quadratic $2$-cocycle (i.e $d^2_{Q} \circ d^1_{Q} =0$).                        
\end{prop}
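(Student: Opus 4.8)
The plan is to take an arbitrary quadratic $1$-cochain $(\tau,\sigma)\in C^1_{Q}(\mathfrak{J},\mathfrak a)$, set $(\theta,\gamma):=d^1_{Q}(\tau,\sigma)$ so that $\theta=d^1\tau$ and $\gamma=d_{r}^2\sigma-\frac{1}{2}B_{\mathfrak a}(\tau\wedge d^1\tau)$, and then check that the two components of $d^2_{Q}(\theta,\gamma)$ vanish separately. For the first component, $d^2\theta=d^2(d^1\tau)=0$ is immediate from Theorem \ref{cobord}, which asserts $d^2\circ d^1=0$.

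For the second component I must show $d_{r}^3\gamma+\frac{1}{2}B_{\mathfrak a}(\theta\wedge(\theta\circ\alpha))=0$. Substituting $\gamma=d_{r}^2\sigma-\frac{1}{2}B_{\mathfrak a}(\tau\wedge d^1\tau)$ and using linearity of $d_{r}^3$, the term $d_{r}^3(d_{r}^2\sigma)$ drops out by Theorem \ref{CohomologyReal} (that is $d_{r}^3\circ d_{r}^2=0$), so that
\[ d_{r}^3\gamma+\tfrac{1}{2}B_{\mathfrak a}(\theta\wedge(\theta\circ\alpha))=-\tfrac{1}{2}\,d_{r}^3 B_{\mathfrak a}(\tau\wedge d^1\tau)+\tfrac{1}{2}B_{\mathfrak a}(d^1\tau\wedge(d^1\tau\circ\alpha)). \]
Thus the whole proposition reduces to the single wedge identity $d_{r}^3 B_{\mathfrak a}(\tau\wedge d^1\tau)=B_{\mathfrak a}(d^1\tau\wedge(d^1\tau\circ\alpha))$, which is precisely \eqref{d3B17oct}.

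To establish this identity I would invoke Proposition \ref{17OCTpropWedge} with $f=d^1\tau$ and $g=\tau$. The four terms $B(d^2f,g(\cdot))$ and $B(d^2_{c}f,g(\cdot))$ all vanish, because $d^2f=d^2(d^1\tau)=0$ by Theorem \ref{cobord} and $d^2_{c}f=d^2_{c}(d^1_{c}\tau)=0$ by the identity $d^2_{c}\circ d^1_{c}=0$ proved above; only the term $B_{\mathfrak a}((f\circ\alpha)\wedge d^1g)=B_{\mathfrak a}((d^1\tau\circ\alpha)\wedge d^1\tau)$ survives, giving exactly \eqref{d3B17oct}.

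The one delicate bookkeeping point, which I expect to be the main (though minor) obstacle, is to match the orderings of the factors: I must identify $B_{\mathfrak a}(\tau\wedge d^1\tau)$ with $B_{\mathfrak a}(d^1\tau\wedge\tau)$ and $B_{\mathfrak a}((d^1\tau\circ\alpha)\wedge d^1\tau)$ with $B_{\mathfrak a}(d^1\tau\wedge(d^1\tau\circ\alpha))$. Both follow from the commutativity of the product \eqref{Wedge} together with the symmetry of $B_{\mathfrak a}$, since interchanging the two factors merely relabels the shuffle sum $Sh(p,q)\leftrightarrow Sh(q,p)$ and leaves each summand unchanged by symmetry of $B_{\mathfrak a}$. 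Feeding the resulting identity back into the displayed expression gives $-\frac{1}{2}B_{\mathfrak a}(\theta\wedge(\theta\circ\alpha))+\frac{1}{2}B_{\mathfrak a}(\theta\wedge(\theta\circ\alpha))=0$, so both components of $d^2_{Q}(\theta,\gamma)$ vanish and hence $d^2_{Q}\circ d^1_{Q}=0$.
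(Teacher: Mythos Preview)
Your proof is correct and follows essentially the same route as the paper's: set $(\theta,\gamma)=d^1_Q(\tau,\sigma)$, kill the first component via $d^2\circ d^1=0$, and for the second component use $d_r^3\circ d_r^2=0$ together with the identity \eqref{d3B17oct} (obtained from Proposition~\ref{17OCTpropWedge} with $f=d^1\tau$, $g=\tau$) to cancel the remaining wedge terms. Your extra remarks on the commutativity of the product \eqref{Wedge} make explicit a bookkeeping step the paper leaves implicit.
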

\begin{proof}
	We set  $\theta=d^1\tau$ and $\gamma=d^2\sigma-\frac{1}{2}B_{\mathfrak a}(d^1\tau\wedge\tau) $. Using  \eqref{d3B17oct},  we have
	$d^3\gamma=
	-\frac{1}{2}B_{\mathfrak a}\left( d^1\tau\wedge  (d^1\tau\circ  \alpha) \right)   $. 
		Hence, by \eqref{2Qcobrd}
		\begin{align*}
			d^2_{Q}(\theta,\gamma)&=(d^2\theta,d_{r}^3\circ d_{r}^2\sigma -\frac{1}{2}B_{\mathfrak a}\left( d^1\tau\wedge (d^1\tau\circ  \alpha)\right)  + \frac{1}{2}B\left(d^1\tau\wedge (d^1\tau\circ  \alpha) \right))\\
			&=(0,0) .	
		\end{align*}				
	\end{proof}
	\begin{defn}
		The $2^{th}$ quadratic  cohomology group of the metric Hom-Jacobi-Jordan  algebra $\mathfrak  J $  on $ \mathfrak a\times  \mathfrak  J^*$,
		the quotient 
		\[ H^2_{Q}(\mathfrak  J,\mathfrak a )=Z^2_{Q}(\mathfrak  J,\mathfrak a )/B^2_{Q}(\mathfrak  J,\mathfrak a ).\]	
	\end{defn}
	\section{Quadratic extensions}	
	In this section, we study quadratic extensions of Hom-Jacobi-Jordan algebras and  we show that they are classified by the cohomology group $H^2_{Q}(\mathfrak  J,\mathfrak a )$.	
	Let $ \left( \mathfrak J ,[\cdot,\cdot]_{\mathfrak J},\alpha_{\mathfrak J} ,B\right) $ be a metric of Hom-Jacobi-Jordan algebra and $I$ an  isotropic ideal of $\mathfrak J  $. For all $x,y\in \mathfrak J$, we denote $[\pi_n(x),\pi_n(y)]_{\overline{\mathfrak J}}=\pi_n\left([x,y] \right)    $, $ \overline{\alpha_{\mathfrak J} }(\pi_n(x))=\pi_n\circ \alpha_{\mathfrak J}(x) $ and $\overline{B}(\pi_n(x),\pi_n(y))=B(x,y)  $ where  $ \pi_n $
	is the natural projection  $\mathfrak J\to \mathfrak J/I $. If $ i\colon \mathfrak{a}\to \mathfrak J $ is a homomorphism, we denote $\overline{i}=\pi_n \circ i  $. 
	\begin{defn}
		Let $ \left(J,[\cdot,\cdot],\alpha \right) $ be a Hom-Jacobi-Jordan algebra, $I$ an isotropic ideal  in $  J $
		and $ \left(\mathfrak{a},\rho,\beta,B_{\mathfrak{a}} \right)  $ a quadratic representation
		of $J$.
		A quadratic extension $ \left(\mathfrak J,I,i,\pi \right)  $ of  $ J $ by $ \mathfrak{a} $ is an exact sequence
		$$0\longrightarrow \left(\mathfrak{a} ,\rho,\beta\right)  \stackrel{\overline{i}}{\longrightarrow}\left( \mathfrak J/I ,[\cdot,\cdot]_{\overline{\mathfrak J}}  ,\overline{\alpha_{\mathfrak J} },\overline{B}\right)  \stackrel{\pi}{\longrightarrow } \left( J,[\cdot,\cdot],\alpha\right)  \longrightarrow 0$$
		satisfying $ \left( \mathfrak J ,d,\alpha_{\mathfrak J },B\right) $ is a metric Hom-Jacobi-Jordan algebra, $\overline{\alpha_{\mathfrak J} }\circ \overline{i}=\overline{i}\circ \beta  $, $ \alpha\circ \pi=\pi \circ \overline{\alpha_{\mathfrak J} } $, 
		$\overline{i} (\mathfrak{a})=I^{\perp}/I$	 and that $ \overline{i}\colon \mathfrak{a}\to I^{\perp}/I $ is an isometry.						
	\end{defn}
	\begin{prop}
		Let
		\begin{equation}\label{qEXTENSION}
			0\longrightarrow \mathfrak{a}\stackrel{i}{\longrightarrow} \mathfrak J/I\stackrel{\pi}{\longrightarrow } J \longrightarrow 0,                            
		\end{equation} 		
		be an extension of 	$J$ by  $\mathfrak{a}$ such that $i\colon \mathfrak{a} \to i\left( \mathfrak{a}\right)  $ is an isometry. Then,
		the quadruple   $ \left(\mathfrak J,I,i,\pi \right)  $ define a quadratic extension if and only if the following sequence define  an extension of  $ \mathfrak{J}/I $	by $J^*$:
		\begin{equation}\label{qEXTENSION2}
			0\longrightarrow J^*\stackrel{\tilde{\pi}^*}{\longrightarrow} \mathfrak J\stackrel{\pi_n}{\longrightarrow } \mathfrak J/I \longrightarrow 0,
		\end{equation}	
		where $ \pi_n $
		is the natural projection  $\mathfrak J\to \mathfrak J/I $,  
		$\tilde{\pi}=\pi\circ \pi_n  $, $ \tilde{\pi}^* $ the dual map of $ \tilde{\pi} $
		where we identify $ J^* $ with $J$.
	\end{prop}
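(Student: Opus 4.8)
The plan is to reduce both sides of the asserted equivalence to the single geometric condition $i(\mathfrak a)=I^{\perp}/I$, and then connect them. First I would unwind the definition of quadratic extension: since we are already given that the sequence \eqref{qEXTENSION} is exact, that $i$ is an isometry onto its image, and that $\mathfrak J$ is a metric Hom-Jacobi-Jordan algebra (so the compatibilities between $\alpha_{\mathfrak J}$, $\overline{\alpha_{\mathfrak J}}$, $\alpha$ and $B$ hold by the standing hypotheses), the only requirements left in the definition are $\overline{i}(\mathfrak a)=I^{\perp}/I$ together with the demand that $\overline{i}\colon \mathfrak a\to I^{\perp}/I$ be an isometry. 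The latter is automatic once the former holds, because $i$ is already isometric onto $i(\mathfrak a)$ and $\overline B$ is exactly the induced metric on $I^{\perp}/I$ (well defined there precisely because $I$ is isotropic). Hence $(\mathfrak J,I,i,\pi)$ is a quadratic extension if and only if $i(\mathfrak a)=I^{\perp}/I$.

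Next I would analyse exactness of \eqref{qEXTENSION2}. Because $\tilde{\pi}=\pi\circ \pi_n$ is a composite of surjections it is surjective, so its dual $\tilde{\pi}^{*}$ is injective, while $\pi_n$ is surjective by construction; thus \eqref{qEXTENSION2} is exact precisely when $\im(\tilde{\pi}^{*})=\ker(\pi_n)=I$. To evaluate $\im(\tilde{\pi}^{*})$ I would use the standard fact that the image of a transpose is the annihilator of the kernel, $\im(\tilde{\pi}^{*})=(\ker\tilde{\pi})^{0}$, and then transport this annihilator through the isomorphism $\mathfrak J\xrightarrow{\sim}\mathfrak J^{*}$, $w\mapsto B(w,\cdot)$, furnished by the nondegeneracy of $B$ (this is the identification referred to in the statement). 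Under it the annihilator of a subspace $W$ becomes its $B$-orthogonal $W^{\perp}$, so that, viewed inside $\mathfrak J$, $\im(\tilde{\pi}^{*})=(\ker\tilde{\pi})^{\perp}$.

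Finally I would chain the equivalences. Using $\ker\tilde{\pi}=\ker(\pi\circ\pi_n)=\pi_n^{-1}(\ker\pi)$ and the exactness of \eqref{qEXTENSION} (which gives $\ker\pi=i(\mathfrak a)$), together with the fact that $(\cdot)^{\perp}$ is an involution on subspaces of the finite-dimensional space $\mathfrak J$ equipped with the nondegenerate form $B$, I obtain
\begin{align*}
\text{\eqref{qEXTENSION2} exact}
&\iff (\ker\tilde{\pi})^{\perp}=I
\iff \ker\tilde{\pi}=I^{\perp}\\
&\iff \pi_n^{-1}(i(\mathfrak a))=I^{\perp}
\iff i(\mathfrak a)=\pi_n(I^{\perp})=I^{\perp}/I,
\end{align*}
where the last step applies $\pi_n$ and uses $I=\ker\pi_n\subset I^{\perp}$, valid since $I$ is isotropic. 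Combining this with the first paragraph yields the proposition.

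The main obstacle I expect is bookkeeping the identifications correctly — the transpose-image/annihilator dictionary, the metric isomorphism $\mathfrak J\cong\mathfrak J^{*}$, and the identification of $J^{*}$ with $J$ — and in particular checking that $\tilde{\pi}^{*}$ genuinely lands in the ideal $I$ as a \emph{morphism} of Hom-Jacobi-Jordan algebras, so that \eqref{qEXTENSION2} is an extension in the algebraic sense and not merely a short exact sequence of vector spaces. These structural compatibilities (that $J^{*}$ is an ideal of $\mathfrak J$ and is invariant under $\alpha_{\mathfrak n}$) are already recorded in the construction preceding Theorem \ref{1Sep23}, so the genuine content is the purely linear equivalence $\im(\tilde{\pi}^{*})=(\ker\tilde{\pi})^{\perp}=I \iff i(\mathfrak a)=I^{\perp}/I$ established above.
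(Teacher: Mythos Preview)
Your proposal is correct and follows essentially the same route as the paper: both arguments reduce the quadratic-extension condition to the single requirement $i(\mathfrak a)=I^{\perp}/I$, identify $\mathrm{Im}(\tilde\pi^{*})$ with $(\ker\tilde\pi)^{\perp}$ via the nondegenerate form $B$, and then pass through $\ker\tilde\pi=I^{\perp}$ to close the equivalence. The only cosmetic difference is that the paper verifies the $\alpha$-compatibilities ($\alpha_{\mathfrak J}^{*}\circ\tilde\pi^{*}=\tilde\pi^{*}\circ\alpha^{*}$ and $\overline{\alpha_{\mathfrak J}}\circ\pi_n=\pi_n\circ\alpha_{\mathfrak J}$) explicitly and treats the two implications separately, whereas you package everything as a single chain of equivalences and defer those compatibilities to the ambient structure; the mathematical content is the same.
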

	\begin{proof}
		We have
		\begin{equation*}
			0\longrightarrow \mathfrak{a}\stackrel{i}{\longrightarrow} \mathfrak J/I\stackrel{\pi}{\longrightarrow } J \longrightarrow 0,                            
		\end{equation*} 		
		and extension of 	$J$ by  $\mathfrak{a}$ such that $i\colon \mathfrak{a} \to i\left( \mathfrak{a}\right)  $ is an isometry.
		Then, 
		\begin{align}
			\overline{\alpha_{\mathfrak J} }\circ i&=i\circ \beta \label{q1}\\
			\alpha\circ \pi&=\pi \circ \overline{\alpha_{\mathfrak J} }\label{q2}\\
			i(\mathfrak{a})&=\ker\pi \label{ker}\\
			B(i(v),i(w))&=B(v,w)\label{q3}
		\end{align}

		$\overline{\alpha_{\mathfrak J} }\circ i=i\circ \beta  $, $ \alpha\circ \pi=\pi \circ \overline{\alpha_{\mathfrak J} } $, 
		$ i(\mathfrak{a})=\ker\pi$

		We assume that 	$ \left(\mathfrak J,I,i,\pi \right)  $  is a quadratic extension.
		Then, by \eqref{quadratic} $ \ker(\pi)=Im(i)$, $\alpha\circ \pi=\pi \circ \overline{\alpha_{\mathfrak J} }$. Moreover, we have $Im(i)=I^{\perp}/I $.\\
		First, we show that  		$\alpha_{\mathfrak J}^* \circ \tilde{\pi}^*=\tilde{\pi}^*\circ \alpha^*  $.	
		We have, \[ \alpha\circ \pi=\pi \circ \overline{\alpha_{\mathfrak J} }=\pi \circ \pi_{n}\circ \alpha_{\mathfrak J}=\tilde{\pi}\circ\alpha_{\mathfrak J}. \]					 
		Hence $( \alpha\circ \pi)^*=(\tilde{\pi}\circ\alpha_{\mathfrak J})^*$. Then 					 
		$\pi^*\circ \alpha^*=\alpha_{\mathfrak J}^*\circ \tilde{\pi}^*.$\\
		By definition of $\overline{\alpha_{\mathfrak J}}$, we have $\overline{\alpha_{\mathfrak J}} \circ \pi_{n}=\pi_{n} \circ \alpha_{\mathfrak J} $.\\
		Now, we show that $Im(\tilde{\pi}^*)=\ker(\pi_n)$. By $\ker\pi= i(\mathfrak{a})=I^{\perp}/I $ and $  \tilde{\pi}=\pi\circ \pi_n $ we obtain 
		$ \ker(\tilde{\pi})=I^{\perp} $.
		Since $ Im(\tilde{\pi}^*)=\left( \ker(\tilde{\pi})\right)^{\perp}  $, one can deduce $Im(\tilde{\pi}^*)=I$. So $Im(\tilde{\pi}^*)=\ker(\pi_n)$
		and the sequence 
		$$0\longrightarrow J^*\stackrel{\tilde{\pi}^*}{\longrightarrow} \mathfrak J^*\cong \mathfrak J\stackrel{\pi_n}{\longrightarrow } \mathfrak J/I \longrightarrow 0,$$	
		define an extension of $ \mathfrak J/I $ by $ J^* $.\\			
		Conversely, we assume that the following sequence
		$$0\longrightarrow J^*\stackrel{\tilde{\pi}^*}{\longrightarrow}\mathfrak J^*\cong \mathfrak J\stackrel{\pi_n}{\longrightarrow } \mathfrak J/I \longrightarrow 0,$$	
		define an extension.Then, 
		$\alpha_{\mathfrak J}^*\circ \tilde{\pi}^*=\pi^*\circ \alpha^*$,
		$\overline{\alpha_{\mathfrak J}}\circ \pi_{n}=\pi_{n}\circ \alpha_{\mathfrak J}$		and 	$Im(\tilde{\pi}^*)=\ker(\pi_n).$
		We have $Im(\tilde{\pi}^*)=\left( \ker(\tilde{\pi})\right)^{\perp}    $,  
		$Im(\tilde{\pi}^*)=\ker(\pi_n)$	and	$\ker(\pi_n)=I$. Hence, $\ker(\tilde{\pi})=I^\perp$	and $ I\subset  I^{\perp} $. Then 
		$\ker(\pi)=I^{\perp}/I$. By \eqref{ker}, we have $ Im(i)=\ker(\pi)=I^{\perp}/I$.
		Moreover, we have \eqref{q1}, \eqref{q2} and \eqref{q3}. Therefore, 
		$ \left(\mathfrak J,I,i,\pi \right)  $  is a quadratic extension.                  
	\end{proof}			
	\begin{defn}
		Two quadratic extensions $ \left(\mathfrak J_1,I_1,i_1,\pi_1\right)  $, $ \left( \mathfrak J_2,I_2,i_2,\pi_2\right)  $ of $ J $ by $ \mathfrak a $ are called
		to be equivalent if there exists an isomorphism of metric Lie algebras
		$ \Phi\colon \mathfrak J_1\to \mathfrak J_2 $ 
		which
		maps $ i_1  $ onto $ i_2 $ and satisfies
		$ \overline{\Phi}\circ i_1=i_2 $
		and
		$\pi_2 \circ\overline{\Phi} =\pi_1 $,
		where $\overline{\Phi}\colon \mathfrak J_1/I_1\to  \mathfrak J_2/I_2 $
		is the induced map.                                                    
	\end{defn}			
	\subsection{Twofold extensions}	
	The twofold extension of Lie  algebras was studied  in \cite{I.KATH2} (also called Standard model in \cite{I.KATH}). In the following, we define and study the Twofold extension of Hom-Jacobi-Jordan  algebras.
	
	Let $ \left(J,[\cdot,\cdot],\alpha \right)  $ a Hom-Jacobi-Jordan  algebra  and let $ \left(\mathfrak a,\rho,\beta,B_{\mathfrak a} \right)  $ 
	be a quadratic representation of $J$. For each $(\theta,\gamma)\in Z^2_{Q(J,\mathfrak a)}$ we want to define  structures of a metric Hom-Jacobi-Jordan algebra on the vector space $\mathfrak d_{\theta,\gamma}:=J\oplus  \mathfrak a\oplus J^* $. Let $ \alpha_{\mathfrak d_{\theta,\gamma}}=\alpha+\beta+\alpha^* $. Define a bracket on $\mathfrak d_{\theta,\gamma}$ by
	\begin{equation*}
		\begin{aligned}
			[x,y]_{\theta,\gamma}&=\delta(x,y)+\theta(x,y)+\gamma(x,y,\cdot);\\
			[x,v]_{\theta,\gamma}&=\rho(x)v +B_{\mathfrak a}\left(\theta(\cdot,x),v \right) ;\\
			[v,w]_{\theta,\gamma}&=B_{\mathfrak a}\left(\rho(\cdot)v,w \right);\\
			[Z,x]_{\theta,\gamma}&=Z\left(\delta(x,\cdot) \right);\\
			[Z_1,v+Z_2]_{\theta,\gamma}&=0.      
		\end{aligned}
	\end{equation*}
	Define a  symmetric bilinear form $B $ on $ d_{\theta,\gamma} $ by
	\begin{equation*}\label{BJ*}
		B(x+v+Z_{1},y+w+Z_{2})=Z_{1}(y)+Z_{2}(x)+B_{\mathfrak a}\left(v,w \right)
	\end{equation*} 
	for all  $x,y\in J,\, v,w\in \mathfrak{a},\ Z_{1},Z_{2}\in J^*$.
	Define the linear map $i_{0}\colon \mathfrak{a}_{\theta,\gamma}\to \mathfrak{d}_{\theta,\gamma}/J^*$ by $ i_{0}(v)=v+J^* $ and the linear map $\pi_{0} \colon \mathfrak{d}_{\theta,\gamma}/J^*\to J$ by $\pi_{0}(x+v+J^*)=x$.
	\begin{prop}\label{theoremTsquadratic}
		With the above notations, the quadruple $ \left(\mathfrak{d}_{\theta,\gamma},J^*,i_{0},\pi_{0} \right) $
		define a quadratic extension.	            
	\end{prop}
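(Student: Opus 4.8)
The plan is to unravel the definition of a quadratic extension and check each requirement for $(\mathfrak d_{\theta,\gamma}, J^*, i_0, \pi_0)$. Explicitly, I must show (i) that $(\mathfrak d_{\theta,\gamma}, [\cdot,\cdot]_{\theta,\gamma}, \alpha+\beta+\alpha^*, B)$ is a metric Hom-Jacobi-Jordan algebra, (ii) that $J^*$ is an isotropic ideal with $(J^*)^\perp/J^*$ isometric to $\mathfrak a$ through $i_0$, and (iii) that the induced short exact sequence together with its $\alpha$-compatibilities holds. Conceptually the statement is the converse of Theorem \ref{1Sep23} and Proposition \ref{Lemmaquadratic2cocycle}: there a metric structure on $M\oplus J^*$ was shown to force exactly these bracket formulas and the quadratic cocycle conditions, whereas here I begin from a cocycle $(\theta,\gamma)\in Z^2_Q(J,\mathfrak a)$ and run the same identifications in reverse.

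For (i), I would first treat $B$. It is symmetric and nondegenerate by inspection, pairing $J$ with $J^*$ by evaluation and restricting to $B_{\mathfrak a}$ on $\mathfrak a$. The Hom-invariance $B((\alpha+\beta+\alpha^*)(X),Y)=B(X,(\alpha+\beta+\alpha^*)(Y))$ reduces, via $\alpha^*(Z)=Z\circ\alpha$ on the $J$--$J^*$ pairing, to $B_{\mathfrak a}(\beta(v),w)=B_{\mathfrak a}(v,\beta(w))$, which is Proposition \ref{PropSep17}. Symmetry of $[\cdot,\cdot]_{\theta,\gamma}$ is immediate from the defining formulas. The invariance $B([X,Y]_{\theta,\gamma},W)=B(X,[Y,W]_{\theta,\gamma})$ is a finite check over the placements of $X,Y,W$ among $J,\mathfrak a,J^*$; every case is the very relation that Theorem \ref{1Sep23} extracted from invariance, so it holds by construction---the summand $B_{\mathfrak a}(\theta(\cdot,x),v)$ in $[x,v]_{\theta,\gamma}$ and the summand $Z(\delta(x,\cdot))$ in $[Z,x]_{\theta,\gamma}$ are precisely what make the pairing associative.

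The Hom-Jacobi identity is the crux. Writing it as $[(\alpha+\beta+\alpha^*)(X),[Y,Z]_{\theta,\gamma}]_{\theta,\gamma}$ summed over the cyclic permutations of $X,Y,Z$, I split by trilinearity according to how many arguments lie in $J$. In the all-$J$ case, projecting onto the three summands yields: the Hom-Jacobi identity of $J$ on the $J$-component; the vanishing $d^2\theta(x,y,z)=0$ on the $\mathfrak a$-component; and, on the $J^*$-component paired with $a\in J$, exactly $d_r^3\gamma(x,y,z,\alpha(a))+\frac{1}{2} B_{\mathfrak a}(\theta\wedge(\theta\circ\alpha))(x,y,z,a)=0$. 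Both vanish because $(\theta,\gamma)$ is a quadratic $2$-cocycle, i.e. $d^2_Q(\theta,\gamma)=0$ in the sense of \eqref{2Qcobrd}. The remaining cases, with one or two arguments in $\mathfrak a$ or $J^*$, collapse to the representation axioms \eqref{rep1}--\eqref{rep2}, the quadratic compatibility \eqref{EqQrepr1}, and the coadjoint identity encoded in $[Z,x]_{\theta,\gamma}=Z(\delta(x,\cdot))$, and are routine once the all-$J$ case is in hand. This all-$J$ case is the main obstacle: it is where the wedge term $\frac{1}{2} B_{\mathfrak a}(\theta\wedge(\theta\circ\alpha))$ must be reconstituted from the interaction of the $\theta$-part of $[x,y]_{\theta,\gamma}$ with the $B_{\mathfrak a}(\theta(\cdot,\cdot),\cdot)$-part of the mixed bracket, reversing precisely the computation in Proposition \ref{Lemmaquadratic2cocycle}.

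Finally, for (ii) and (iii): $J^*$ is isotropic since $B(Z_1,Z_2)=0$, and it is an ideal because $[Z,x]_{\theta,\gamma}\in J^*$, $[Z,v+Z']_{\theta,\gamma}=0$, and $\alpha^*(J^*)\subset J^*$. From the formula for $B$ one reads off $(J^*)^\perp=\mathfrak a\oplus J^*$, so $(J^*)^\perp/J^*\cong\mathfrak a$ and $i_0(v)=v+J^*$ is an isomorphism onto it, which is an isometry since $\overline B(i_0(v),i_0(w))=B(v,w)=B_{\mathfrak a}(v,w)$. Exactness of $0\to\mathfrak a\to\mathfrak d_{\theta,\gamma}/J^*\to J\to0$ and the relations $\overline{\alpha_{\mathfrak d_{\theta,\gamma}}}\circ i_0=i_0\circ\beta$ and $\alpha\circ\pi_0=\pi_0\circ\overline{\alpha_{\mathfrak d_{\theta,\gamma}}}$ follow directly from the definitions of $i_0$, $\pi_0$ and $\alpha+\beta+\alpha^*$, which completes the verification.
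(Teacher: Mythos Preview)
Your proposal is correct and covers the same ground as the paper's proof, but the organization differs in one notable way. The paper does not verify the bracket symmetry, the invariance $B([X,Y]_{\theta,\gamma},W)=B(X,[Y,W]_{\theta,\gamma})$, and the Hom--Jacobi identity as three separate axioms; instead it invokes Proposition~\ref{gamma}, which packages all of these into two conditions on the trilinear form $\gamma_{\mathfrak n}(a,b,c)=B([a,b]_{\theta,\gamma},c)$: that $\gamma_{\mathfrak n}\in S^3(\mathfrak n,\R)$ and that $d_r^3\gamma_{\mathfrak n}=0$. The case analysis the paper then carries out (restricting $\gamma_{\mathfrak n}$ to $J^3$, $J^2\times\mathfrak a$, $J\times\mathfrak a^2$, $\mathfrak a^3$ for symmetry, and to the various argument types for $d_r^3\gamma_{\mathfrak n}$) is computationally identical to your direct check of invariance and Hom--Jacobi by component placement. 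Your direct route is slightly more transparent; the paper's route via Proposition~\ref{gamma} is more economical in that symmetry of $\gamma_{\mathfrak n}$ simultaneously encodes both bracket symmetry and $B$-invariance. You are also more complete than the paper, which explicitly says it will only prove the metric Hom--Jacobi--Jordan structure and leaves your items (ii) and (iii) unstated; your treatment of $(J^*)^\perp=\mathfrak a\oplus J^*$, the isometry of $i_0$, and the $\alpha$-compatibilities is a welcome addition. One small remark: your appeal to Proposition~\ref{PropSep17} for $B_{\mathfrak a}(\beta v,w)=B_{\mathfrak a}(v,\beta w)$ is a reference to where that identity was \emph{derived} from an existing metric structure; here it must rather be taken as part of the standing data on the quadratic representation, so you should cite it as a hypothesis rather than as that proposition.
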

	\begin{proof}
		We only prove that 
		$\left(\mathfrak{d}_{\theta,\gamma},	[\cdot,\cdot]_{\theta,\gamma},\alpha_{\mathfrak d_{\theta,\gamma}},B \right) $ is a metric Hom-Jordan-Jacobi algebra.
		Denote $ \mathfrak{d}_{\theta,\gamma}=\mathfrak n $ and define a trilinear form $\gamma_{\mathfrak n}$ on $\mathfrak n$ by $\gamma_{\mathfrak n}\left(a,b,c \right)=B\left(d'(a,b),c \right)  $ for all $a,b,c\in \mathfrak n$.  Using Theorem\ref{gamma},  it  is sufficient to  show that $\gamma_{\mathfrak n}$ is symmetric and $d_{r}^3\gamma_{\mathfrak n}=0$.\\
		We have \begin{gather*}
			\gamma_{\mathfrak n}(x,y,z)=B\left([x,y]_{\theta,\gamma},z \right)
			=B\left([x,y]+\theta(x,y)+\gamma(x,y,\cdot),z \right)
			=\gamma(x,y,z).
		\end{gather*}
		Since $\gamma$ is symmetric, we obtain that the restriction of $\gamma_{\mathfrak n}$ to $J^3$ is symmetric.\\
		For all $ x,y\in J $, $ v\in \mathfrak a $, we have 
		\begin{align*}
			\gamma_{\mathfrak n}(x,y,v)&=B\left([x,y]_{\theta,\gamma},v \right)
			=B_{\mathfrak a}(\theta(x,y),v);\\
			\gamma_{\mathfrak n}(x,v,y)&=B\left([x,v]_{\theta,\gamma},y \right)
			=B_{\mathfrak a}(\theta(x,y),v).
		\end{align*}	
		Therefore, using the fact that $[x,y]_{\theta,\gamma}=[y,x]_{\theta,\gamma}  $ and $ [x,v]_{\theta,\gamma} =[v,x]_{\theta,\gamma}$, one can deduce that 
		the restriction of $\gamma_{\mathfrak n}$ to $J^2\times V$ is symmetric.\\
		For all $ x\in J $, $ v,w\in \mathfrak a $, we have 
		\begin{align*}
			\gamma_{\mathfrak n}(x,v,w)&=B\left([x,v]_{\theta,\gamma},w \right)
			=B_{\mathfrak a}(\rho(x)w,v)\\
			\gamma_{\mathfrak n}(v,w,x)&=B\left([v,w]_{\theta,\gamma},x \right)
			=B_{\mathfrak a}\left(\rho(x)v,w \right)
		\end{align*}
		and since $ \left(\mathfrak a,\rho,\beta,B_{\mathfrak a} \right)  $ is a quadratic representation of $J$ on $\mathfrak a$, then
		the restriction of $\gamma_{\mathfrak n}$ to $J\times V^2$ is symmetric.\\
		For all $ u, v,w\in \mathfrak a $, we have 
		\begin{gather*}
			\gamma_{\mathfrak n}(v,w,u)=B\left([v,w]_{\theta,\gamma},u \right)
			=B\left(B_{\mathfrak a}\left(\rho(\cdot)v,w \right),u \right)
			=0.
		\end{gather*}
		Thus,  the restriction of $\gamma_{\mathfrak n}$ to $ V^3$ is symmetric too.\\
		For all $x,y,z,a\in J$, we have
		\begin{align}
			&d^3_{r}\gamma_{\mathfrak n}(x,y,z,t)\nonumber\\
			&	=\gamma\left( \delta(x,y),\alpha(z),t \right)
			+\gamma\left( \delta(x,z),\alpha(y),t \right)
			+\gamma\left( \delta(y,z),\alpha(x),t \right)\label{Sep22Gamma1}\\
			+&\gamma(x,y,\delta(\alpha(z),t )	
			+\gamma(x,z,\delta(\alpha(y),t) )	
			+\gamma(y,z,\delta(\alpha(x),t) )\label{Sep22Gamma2}	\\
			+&\gamma(\alpha(z),t,\delta(x,y) )	
			+\gamma(\alpha(y),t,\delta(x,z) )	
			+\gamma(\alpha(x),t,\delta(y,z) )\label{Sep22Gamma3}	\\
			+&\gamma\left( \delta(\alpha(z),t),x,y \right)
			+\gamma\left( \delta(\alpha(y),t),x,z \right)
			+\gamma\left( \delta(\alpha(x),t),y,z \right)\label{Sep22Gamma4}\\
			+& B_{\mathfrak a}\left(\theta (y,x),\theta(\alpha(z),t) \right)
			+ B_{\mathfrak a}\left(\theta (z,x),\theta(\alpha(y),t) \right)
			+ B_{\mathfrak a}\left(\theta (z,y),\theta(\alpha(x),t) \right)\label{Sep22Gamma5}\\
			+& B_{\mathfrak a}\left(\theta (t,\alpha(z)),\theta(x,y) \right)
			+ B_{\mathfrak a}\left(\theta (t,\alpha(y)),\theta(x,z) \right)
			+ B_{\mathfrak a}\left(\theta (t,\alpha(x)),\theta(y,z) \right)\label{Sep22Gamma6}
		\end{align}	
		where $ t=\alpha(a) $.  Since $\gamma$ is symmetric, we  get \[\eqref{Sep22Gamma1}+\eqref{Sep22Gamma2}= d_{r}\gamma(x,y,z,t)  \text{ and  } 
		\eqref{Sep22Gamma3}+\eqref{Sep22Gamma4}=d_{r}\gamma(x,y,z,t). \]
		Since $\theta  $ is a $2$-Hom-cochain, by Proposition \ref{1LemmaSep17}, we obtain
		\[   \eqref{Sep22Gamma5}+ \eqref{Sep22Gamma6}=B_{\mathfrak a}\left(\theta\wedge ( \theta\circ \alpha)\right)(x,y,z,a).\]
		Thus, $d^3_{r}\gamma_{\mathfrak n}(x,y,z,t)=2d_{r}\gamma(x,y,z,t)+B_{\mathfrak a}\left(\theta\wedge ( \theta\circ \alpha)\right)(x,y,z,a)  $.\\
		Then, since $(\theta,\gamma) $ is a  quadratic $2$-cocycle, we obtain $ d^3_{r}\gamma_{\mathfrak n}(x,y,z,t)=0. $\\
		By straightforward computations, for all $ x,y,z\in J $, $ v\in \mathfrak a$, we have
		\begin{align*}
			&\frac{1}{2}d^3\gamma_{\mathfrak n}(x,y,z,v)\\
			&=B_{\mathfrak a}\left( \theta([x,y],\alpha(z)),v\right)
			+B_{\mathfrak a}\left( \theta([x,z],\alpha(y)),v\right)
			+ B_{\mathfrak a}\left(\theta([y,z],\alpha(x)),v \right) \\
			+&B\left( \rho(\alpha(z))\theta(x,y),v\right)
			+B\left( \rho(\alpha(x))\theta(y,z),v\right)
			+B\left( \rho(\alpha(y))\theta(x,z),v\right)\\
			&=\frac{1}{2}B_{\mathfrak a}\left(d^2\theta(x,y,z),v\right).	
		\end{align*}
		Therefore, $d^3\gamma_{\mathfrak n}(x,y,z,v)=0  $ by $(\theta,\gamma) $ is a  quadratic $2$-cocycle.\\
		Similarly, for any $ x,y\in J $, $u, v\in \mathfrak a$, we get
		\begin{align}
			&d^3\gamma_{\mathfrak n}(x,y,u,v)\nonumber\\
			&=B_{\mathfrak n}\Big(u
			,\beta(\rho([x,y])v)  \Big)
			+B_{\mathfrak a}\left(\rho(\alpha(y))\rho(x)u,v \right) 
			+B_{\mathfrak a}\left(\rho(\alpha(x))\rho(y)u,v \right)\label{aproof}\\
			+&B\Big(\rho(y)\rho(\alpha(x))v  ,u     \Big)
			+B\Big(\rho(x)\rho(\alpha(y))v  ,u     \Big)
			+B_{\mathfrak a}\left(u,\left( \beta\rho( [x, y] )v\right)  \right).\label{bproof}	
		\end{align}
		Therefore, by \eqref{coadjointprime1} (resp.\eqref{coadjointprime2} ), we have 
		$\eqref{aproof}=0$ (resp $\eqref{bproof}=0$ ).
		Hence,				
		$ d^3\gamma_{\mathfrak n}(x,y,u,v)=0 $.\\
		For all $ x\in J $, $u, v,w, s\in \mathfrak a$, $Z\in J^*$, by $ B\left(Z,u \right)=0  $, we have $  d^3\gamma_{\mathfrak n}(u,v,w,x) =0  $,
		$  d^3\gamma_{\mathfrak n}(u,v,x,w) =0  $ and $  d^3\gamma_{\mathfrak n}(u,v,w,s) =0  $. The rest of the proof is straightforward.	              
	\end{proof}										
	\begin{defn}				
		We denote the quadratic extension$ \left(\mathfrak{d}_{\theta,\gamma},J^*,i_{0},\pi_{0} \right) $
		constructed in The Proposition  \ref{theoremTsquadratic}
		by $ \mathfrak d_{\theta,\gamma}(\mathfrak{a},J,\rho) $ and call it a twofold extension.
	\end{defn}
	\subsection{Classification by cohomology}
	\begin{defn}
		Two quadratic extensions $ \left(\mathfrak J_1,I_1,i_1,\pi_1\right)  $, $ \left( \mathfrak J_2,I_2,i_2,\pi_2\right)  $ of $ J $ by $ \mathfrak a $ are called
		to be equivalent if there exists an isomorphism of metric Hom-Jacobi-Jordan  algebras
		$ \Phi\colon \mathfrak J_1\to \mathfrak J_2 $ 
		which
		maps $ i_1  $ onto $ i_2 $ and satisfies
		$ \overline{\Phi}\circ i_1=i_2 $
		and
		$\pi_2 \circ\overline{\Phi} =\pi_1 $,
		where $\overline{\Phi}\colon \mathfrak J_1/I_1\to  \mathfrak J_2/I_2 $
		is the induced map.                                                    
	\end{defn}
	\begin{prop}\label{bijectionApril}
		Any quadratic extension $ \left(\mathfrak J,I,i,\pi \right)  $ is equivalent to a  Twofold extension $\left( \mathfrak{d}_{\theta,\gamma},J^*,i_{0},j_{0} \right)  $ 
	\end{prop}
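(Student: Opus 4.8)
The plan is to read off the quadratic $2$-cocycle $(\theta,\gamma)$ attached to the given extension and then exhibit the equivalence with the corresponding twofold extension through a single choice of isotropic splittings.

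First I would unwind the quadratic extension $(\mathfrak J,I,i,\pi)$ into its two constituent abelian extensions. The defining exact sequence presents $\mathfrak J/I$ as an abelian extension of $J$ by $\mathfrak a$, while the previous Proposition shows that $\mathfrak J$ is simultaneously an abelian extension $0\to J^*\to \mathfrak J\to \mathfrak J/I\to 0$. Choosing a splitting of $\pi$ compatible with $\alpha$ and applying Theorem \ref{structureMAY24} to the first sequence yields a representation $\rho$ together with a $2$-cocycle $\theta\in Z^2_{\alpha,\beta}(J,\mathfrak a)$ such that $[\cdot,\cdot]_{\overline{\mathfrak J}}=[\cdot,\cdot]+\rho+\theta$. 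Feeding the second sequence through Theorem \ref{1Sep23} then pins down the full bracket: on the resulting vector space decomposition $\mathfrak J\cong J\oplus\mathfrak a\oplus J^*$ the metric condition forces the structure \eqref{1d}, with $\gamma\in S^3(J,\R)$ given by $\gamma(x,y,z)=\gamma'(x,y)(z)$. By Proposition \ref{Lemmaquadratic2cocycle} the pair $(\theta,\gamma)$ lies in $Z^2_Q(J,\mathfrak a)$, so the twofold extension $\mathfrak d_{\theta,\gamma}(\mathfrak a,J,\rho)$ is defined and, by Proposition \ref{theoremTsquadratic}, is itself a quadratic extension of $J$ by $\mathfrak a$.

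Next I would define $\Phi\colon \mathfrak J\to \mathfrak d_{\theta,\gamma}$ to be exactly the linear identification $J\oplus\mathfrak a\oplus J^*\to J\oplus\mathfrak a\oplus J^*$ furnished by the chosen splittings, arranged so that $\alpha_{\mathfrak J}$ corresponds to $\alpha+\beta+\alpha^*$. Because the brackets \eqref{1d} computed on $\mathfrak J$ agree term by term with the defining brackets $[\cdot,\cdot]_{\theta,\gamma}$ of $\mathfrak d_{\theta,\gamma}$, the map $\Phi$ is a homomorphism of Hom-Jacobi-Jordan algebras intertwining the two twists. For the metric structure I would note that the bilinear form on $\mathfrak J$ satisfies \eqref{BJ*} on $J^*\times(J\oplus\mathfrak a)$, makes $J$ and $J^*$ isotropic, and restricts to $B_{\mathfrak a}$ on $\mathfrak a$ (since $\overline i$ is an isometry); this is precisely the form placed on $\mathfrak d_{\theta,\gamma}$, so $\Phi$ is an isometry and hence an isomorphism of metric Hom-Jacobi-Jordan algebras.

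Finally, since $\Phi$ sends $I=\ker\pi_n$ (which corresponds to the $J^*$-summand) onto $J^*$, the induced map $\overline\Phi\colon \mathfrak J/I\to \mathfrak d_{\theta,\gamma}/J^*$ is well defined and respects the $J\oplus\mathfrak a$ decomposition; a direct check then gives $\overline\Phi\circ i=i_0$ and $\pi_0\circ\overline\Phi=\pi$, establishing equivalence. I expect the genuine obstacle to be concentrated in the first step: one must choose the two splittings compatibly — the splitting of $\pi_n$ landing in an $\alpha_{\mathfrak J}$-invariant complement adapted to the isotropy of $I$ — so that the transported bracket is literally \eqref{1d} rather than merely cohomologous to it. Once Theorem \ref{1Sep23} is invoked with these adapted splittings, the verification that $\Phi$ is an isometric isomorphism satisfying the diagram is routine.
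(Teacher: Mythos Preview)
Your proposal is correct and follows essentially the same route as the paper: decompose $(\mathfrak J,I,i,\pi)$ via the two stacked abelian extensions $0\to\mathfrak a\to\mathfrak J/I\to J\to0$ and $0\to J^*\to\mathfrak J\to\mathfrak J/I\to0$, choose compatible sections to obtain $\mathfrak J\cong J\oplus\mathfrak a\oplus J^*$, invoke Theorem \ref{1Sep23} to identify the transported bracket with $[\cdot,\cdot]_{\theta,\gamma}$, and take the resulting identification as the equivalence. You are in fact slightly more explicit than the paper in checking the isometry condition and in flagging the need for $\alpha$-compatible splittings.
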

	\begin{proof}
		Let $$\mathcal E:0\longrightarrow \mathfrak{a}\stackrel{i}{\longrightarrow} \mathfrak J/I\stackrel{\pi}{\longrightarrow } J \longrightarrow 0,$$
		be the 
		extension of $ J $ defined in \eqref{qEXTENSION} and $s$ a section of $ \mathcal E $. 	Then we have by Proposition \ref{saadaoui}, $\mathfrak J/I=s(J)\oplus i(\mathfrak a)$ and the extension $ \mathfrak e $ is equivalent to  
		$$0\longrightarrow \left(\mathfrak{a} ,\rho,\beta\right)  \stackrel{i_{0}}{\longrightarrow}\left(  M,[\cdot,\cdot]_{\theta},\alpha_{M}\right)  \stackrel{\pi_{0}}{\longrightarrow } \left( J,[\cdot,\cdot],\alpha\right)  \longrightarrow 0$$ 
		where $ \theta  $ is a $2$-cocyle of $J$ on $\mathfrak{a}$ and  $M=J\oplus \mathfrak{a}.   $\\
		Now, we consider the extension
		$$\mathcal E^*:0\longrightarrow J^*\stackrel{\tilde{\pi}^*}{\longrightarrow} \mathfrak J\stackrel{\pi_n}{\longrightarrow } \mathfrak J/I \longrightarrow 0,$$			
		be the  extension   defined in \eqref{qEXTENSION2} and $ s' $ a section of $ \mathcal E^* $.
		Then we have by Proposition \ref{saadaoui}, 
		$\mathfrak J=s'(J/I )\oplus  \tilde{\pi}^*(J^*)  $ and 	
		the extension $ \mathcal E $ is equivalent to  
		$$0\longrightarrow \left(J^* ,\rho',\alpha'\right)  \stackrel{i_{0}}{\longrightarrow}\left(  M',[\cdot,\cdot]_{\gamma'},\alpha_{M'}\right)  \stackrel{\pi_{0}}{\longrightarrow } \left( \mathfrak J/I ,[\cdot,\cdot]_{\overline{\mathfrak J }},\overline{\alpha_{\mathfrak J} }\right)  \longrightarrow 0$$ 
		where $ \gamma' $ is a $2$-cocycle of $ \mathfrak J/I $ on $J^*$
		and
		$M'=\mathfrak J/I\oplus J^*.   $ \\
		We have $\mathfrak J=s'(J/I )\oplus  \tilde{\pi}^*(J^*) =s'(s(J)\oplus i(\mathfrak a))\oplus  \tilde{\pi}^*(J^*) $.
		We can write $ \pi_n\colon s'(\mathfrak J/I)\to \mathfrak J/I $ and $ \pi\colon s(J)\to J $. Hence 
		$\tilde{\pi}^*(J^*)=\left( s's(J) \right) ^*$                      
		
		Using $\mathfrak J=s'(J/I )\oplus  \tilde{\pi}^*(J^*)  $ and $ \tilde{\pi}^*(J^*)=\left( s's(J) \right) ^*$, we obtain $\mathfrak J= s's(J) \oplus s' i(\mathfrak{a})  \oplus  \left( s's(J)\right)^* $.\\	
		Then, using \textbf{Proposition} \ref{1Sep23}, for  all $x\in  J$, $v\in  \mathfrak a$,  $Z\in   \mathfrak J^*$ we have   
		\begin{align*}
			[s's(x),s's(y)]_{\mathfrak J}&=[s's(x),s's(y)]_{s's(J)}+\theta(s's(x),s's(y))+\gamma(s's(x),s's(y),\cdot);\\
			[s's(x),s'i(v)]_{\mathfrak J}&=\rho(s's(x))v+B_\rho(s'i(v),\alpha(s's(x),\cdot));\\
			[s'i(v),s'i(w)]_{\mathfrak J}&=B_{\mathfrak{a}}(\rho(\cdot)(s'i(v)),s'i(w));\\
			[Z,s's(x)]_{\mathfrak J}&=Z(\delta(s's(x),\cdot));\\
			[Z_1,s'i(v)+Z_2]_{\mathfrak J}&=0.
		\end{align*}
		
		Now, define the linear map $ \Psi\colon  J \oplus \mathfrak{a}  \oplus  J^*\to \mathfrak J$ by $\Psi(x+v+Z)=s's(x)+s'i(v)+(s's)^{*}(Z) $ and the bilinear 
		map $ [\cdot,\cdot]_{\mathfrak d}\colon J \oplus \mathfrak{a}  \oplus  J^*\to J \oplus \mathfrak{a}  \oplus  J^* $ by \[[x+v+Z,y+w+Z']_{\mathfrak d} =\Psi^{-1}\left( [s's(x)+s'i(v)+(s's)^{*}(Z),s's(y)+s'i(w)+(s's)^{*}(Z')]_{\mathfrak J}\right)  .	\]
		Then
		\begin{align*}\label{ext2021}
			&\left[ \Psi(x+v+Z),\Psi(y+w+Z')\right]_{\mathfrak J}\\
			&= \left[ s's(x)+s'i(v)+(s's)^{*}(Z),s's(y)+s'i(w)+(s's)^{*}(Z')\right]_{\mathfrak J}\\
			&=\Psi\left([x+v+Z,y+w+Z']_{\mathfrak d} \right).          
		\end{align*} 
		Moreover, we have $ \overline{\Psi}\circ i_0(v)=i(v) $  and $ \pi\circ \overline{\Psi}(\overline{x}) = \pi\circ s(x)=x=\pi_0(x)$. 
	\end{proof}
\begin{lem}\label{H2}
	Let  $\mathfrak d_{\theta,\gamma}:= \mathfrak d_{\theta,\gamma}(\mathfrak{a},J,\rho) $ and   $\mathfrak d_{\theta',\gamma'}:= \mathfrak d_{\theta',\gamma'}(\mathfrak{a},J,\rho) $ two Twofold  extensions such that $ d^2_{Q}(\theta,\gamma) =d^2_{Q}(\theta',\gamma')$.		
	Then, the Twofold extensions 
	$\mathfrak d_{\theta,\gamma}:= \mathfrak d_{\theta,\gamma}(\mathfrak{a},J,\rho) $ and   $\mathfrak d_{\theta',\gamma'}:= \mathfrak d_{\theta',\gamma'}(\mathfrak{a},J,\rho) $		
	are equivalents.             
\end{lem}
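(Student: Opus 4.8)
The plan is to convert the cohomological hypothesis into explicit transition data by means of Theorem \ref{17octequiv}, and then write down a block lower‑triangular isomorphism built from that data. Since $(\theta,\gamma)$ and $(\theta',\gamma')$ underlie genuine twofold extensions they are quadratic $2$‑cocycles, and in particular $\theta$ satisfies $d^2_c\theta=0$ (a consequence of the metric cocycle conditions), so Theorem \ref{17octequiv} applies and supplies a $1$‑cochain $\tau\in C^1_{\alpha,\beta}(J,\mathfrak a)$ and a form $\sigma$ with
\[ \theta'=\theta+d^1\tau, \qquad \gamma'=\gamma+d^2\sigma-B\left(\tau\wedge\left(\theta+\tfrac12 d^1\tau\right)\right). \]
These two identities are exactly the ingredients that will force the candidate map to respect the two brackets.

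I would then define $\Phi\colon \mathfrak d_{\theta,\gamma}\to\mathfrak d_{\theta',\gamma'}$ on $J\oplus\mathfrak a\oplus J^*$ by
\[ \Phi(x+v+Z)=x+\bigl(v+\tau(x)\bigr)+\bigl(Z+\phi(x)+\psi(v)\bigr), \]
where $\psi(v)=-B_{\mathfrak a}\!\left(v,\tau(\cdot)\right)\in J^*$ and $\phi\colon J\to J^*$ has prescribed symmetric part $\phi(x)(y)+\phi(y)(x)=-B_{\mathfrak a}(\tau(x),\tau(y))$, its remaining freedom being fixed by $\sigma$ together with the quadratic correction $-\tfrac12 B_{\mathfrak a}(\tau(\cdot),\tau(\cdot))$. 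Because $\Phi$ is block lower‑triangular with identity diagonal blocks it is automatically a linear isomorphism, and the compatibilities $\overline\Phi\circ i_0=i_0$, $\pi_0\circ\overline\Phi=\pi_0$ are immediate from this shape. The equivariance $\Phi\circ\alpha_{\mathfrak d_{\theta,\gamma}}=\alpha_{\mathfrak d_{\theta',\gamma'}}\circ\Phi$ reduces on the $\mathfrak a$‑slot to $\tau\circ\alpha=\beta\circ\tau$ (valid as $\tau\in C^1_{\alpha,\beta}$) and on the $J^*$‑slot to the $\alpha$‑invariance of $\sigma$ together with Proposition \ref{PropSep17}, which yields $\psi\circ\beta=\alpha^*\circ\psi$.

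The isometry $B'(\Phi a,\Phi b)=B(a,b)$ I would verify by expanding $B'(x+v+Z,y+w+Z')=Z(y)+Z'(x)+B_{\mathfrak a}(v,w)$: the choice $\psi(v)=-B_{\mathfrak a}(v,\tau(\cdot))$ cancels the cross terms between $\mathfrak a$ and $J$ (using the symmetry of $B_{\mathfrak a}$ for the two mixed contributions), while the prescribed symmetric part of $\phi$ absorbs the $B_{\mathfrak a}(\tau(x),\tau(y))$ term. It then remains to check that $\Phi$ preserves the bracket. The mixed brackets $[x,v]$, $[v,w]$ and $[Z,x]$ match routinely, using that $\rho$ is a representation and the quadratic‑representation identity \eqref{EqQrepr1}. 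For $[x,y]$ the $\mathfrak a$‑component gives $\theta'(x,y)+\rho(x)\tau(y)+\rho(y)\tau(x)=\theta(x,y)+\tau([x,y])$, precisely because $\theta'=\theta+d^1\tau$, which is what $\Phi([x,y]_{\theta,\gamma})$ requires.

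The main obstacle is the $J^*$‑component of $\Phi([x,y]_{\theta,\gamma})=[\Phi x,\Phi y]_{\theta',\gamma'}$. After collecting terms this becomes an equality in $J^*$ involving $\gamma$, $\gamma'$, $\phi([x,y])$, $\phi(x)([y,\cdot])+\phi(y)([x,\cdot])$, the pairings $B_{\mathfrak a}(\theta'(\cdot,x),\tau(y))$ and $B_{\mathfrak a}(\theta'(\cdot,y),\tau(x))$, and the term $B_{\mathfrak a}(\rho(\cdot)\tau(x),\tau(y))$. I expect to reduce it to the second transition identity by expanding $d^2\sigma$ and the two wedge products and invoking the coboundary formula of Proposition \ref{17OCTpropWedge} (in particular \eqref{d3B17oct}) to rewrite $B(\tau\wedge d^1\tau)$ and $B(\tau\wedge\theta)$ in terms that cancel the $\phi$‑ and $\psi$‑contributions. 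This bookkeeping is the delicate step; once it closes, $\Phi$ is an isometric isomorphism of metric Hom‑Jacobi‑Jordan algebras compatible with the extension data, and hence the two twofold extensions are equivalent.
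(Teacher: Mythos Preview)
Your approach is essentially the paper's: both invoke Theorem~\ref{17octequiv} to extract $(\tau,\sigma)$ and then write down a block lower-triangular isometry on $J\oplus\mathfrak a\oplus J^*$, checking equivariance, isometry, and bracket preservation case by case. The only point you leave open is the precise form of $\phi$; the paper (going in the direction $\mathfrak d_{\theta',\gamma'}\to\mathfrak d_{\theta,\gamma}$) takes $\Phi(x+v+Z)=x+(v-\tau(x))+\bigl(Z-\sigma(x,\cdot)-\tfrac12 B_{\mathfrak a}(\tau(x),\tau(\cdot))+B_{\mathfrak a}(v,\tau(\cdot))\bigr)$, and with $\phi$ pinned down this way the $J^*$-component of $[x,y]$ closes by a direct expansion using \eqref{operatorCoadjoint4} rather than any further appeal to Proposition~\ref{17OCTpropWedge}.
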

\begin{proof}
	Define the linear map $\Phi\colon J\oplus \mathfrak a\oplus J^*\to J\oplus \mathfrak a\oplus J^* $  by \[\Phi(x+v+Z)=x+\underbrace{ v -\tau(x)}_{\in \mathfrak a}+\underbrace{  Z
		-\frac{1}{2}B_{\mathfrak a}\big(\tau(x),\tau(\cdot)\big)
		+B_{\mathfrak a}(v,\tau(\cdot))}_{\in J^*}.\] \\
	We have	
	\begin{align*}
		\Phi(\alpha(x)+\beta(v)+\alpha'(Z))&=\alpha(x)+ \beta(v) - \tau(\alpha(x))  +\alpha' (Z)
		-\frac{1}{2}B_{\mathfrak a}\big(\tau(\alpha(x)),\tau(\cdot)\big)
		+B_{\mathfrak a}(\beta(v),\tau(\cdot)) \\
		&=\alpha(x)+ \beta(v) - \beta(\tau(x))  +\alpha' (Z)
		-\frac{1}{2}B_{\mathfrak a}\big(\beta(\tau(x)),\tau(\cdot)\big)
		+B_{\mathfrak a}(\beta(v),\tau(\cdot))\\
		&=\alpha(x)+ \beta(v) - \beta(\tau(x))  +\alpha' (Z)
		-\frac{1}{2}B_{\mathfrak a}\big(\tau(x),\beta(\tau(\cdot))\big)
		+B_{\mathfrak a}(v,\beta(\tau(\cdot)))\\
		&=\alpha(x)+ \beta(v) - \beta(\tau(x))  +\alpha' (Z)
		-\frac{1}{2}B_{\mathfrak a}\big(\tau(x),\tau(\alpha(\cdot))\big)
		+B_{\mathfrak a}(v,\tau(\alpha(\cdot)))\\
		&=\alpha(x)+ \beta\left( v- \tau(x)\right)   +\alpha' \left( Z
		-\frac{1}{2}B_{\mathfrak a}\big(\tau(x),\tau(\cdot)\big)
		+B_{\mathfrak a}(v,\tau(\alpha(\cdot)))\right) .	            
	\end{align*}	            
	Hence, $\Phi\circ (\alpha+\beta+\alpha')=(\alpha+\beta+\alpha')\circ \Phi $.

	We have
	\begin{equation*}
		\begin{aligned}
			[x,y]_{\theta,\gamma}&=[x,y]+\theta(x,y)+\gamma(x,y,\cdot);\\
			[x,v]_{\theta,\gamma}&=\rho(x)v +B_{\mathfrak a}\left(\theta(\cdot,x),v \right) ;\\
			[v,w]_{\theta,\gamma}&=B_{\mathfrak a}\left(\rho(\cdot)v,w \right);\\
			[Z,x]_{\theta,\gamma}&=Z\left([x,\cdot] \right);\\
			[Z_1,v+Z_2]_{\theta,\gamma}&=0.      
		\end{aligned}
	\end{equation*}
	Using 
	Proposition \ref{17octequiv},
	we have 	$	\theta'=\theta+d_{r}^1\tau $ and $
	\gamma'=	\gamma+d_{r}^2\sigma  -	B\left( \tau\wedge (\theta+\frac{1}{2}d^1\tau )\right) $. Hence, the structure $[\cdot,\cdot]_{\theta',\gamma'}$ of the Twofold  extension $\mathfrak d_{\theta',\gamma'}:= \mathfrak (\mathfrak{a},J,\rho) $ is given by
	\begin{align*}
		[x,y]_{\theta',\gamma'}&=[x,y]+\theta(x,y)+d^1\tau(x,y)+\gamma(x,y,\cdot)\\
		+&d^2\sigma  (x,y,\cdot )-	B((\theta+\frac{1}{2}d^1\tau)\wedge \tau  )(x,y,\cdot )
		;\\
		[x,v]_{\theta',\gamma'}&=\rho(x)v +B_{\mathfrak a}\left(\theta(\cdot,x)+d^1\tau(\cdot,x),v \right) ;\\
		[v,w]_{\theta',\gamma'}&=B_{\mathfrak a}\left(\rho(\cdot)v,w \right);\\
		[Z,x]_{\theta',\gamma'}&=Z\left([x,\cdot] \right);\\
		[Z_1,v+Z_2]_{\theta,\gamma}&=0.
	\end{align*}	
	
	We have
	\begin{align*}
		\Phi\left( [x,y]_{\theta',\gamma'}\right)
		&=[x,y]+\theta(x,y)+d^1\tau (x,y)
		+\gamma(x,y,\cdot )\\
		+&d^2\sigma(x,y,\cdot) -B_{\mathfrak a}\big((\theta +\frac{1}{2}d^1\tau)\wedge \tau \big)(x,y,\cdot)\\
		-&\tau\left([x,y] \right)-\sigma\left( [x,y],\cdot\right)
		+\frac{1}{2}B_{\mathfrak a}\big(\tau([x,y],\tau(\cdot)\big)\\
		+&B_{\mathfrak a}\left( (\theta(x,y)+d^1\tau(x,y),\tau(\cdot)\right)  
	\end{align*}
	Hence, by \eqref{operatorCoadjoint4} , \eqref{Wedge} and \eqref{EqQrepr1}, we obtain
	\begin{align*}
		\Phi\left( [x,y]_{\theta',\gamma'}\right)&	=[x,y]+\theta(x,y)+\gamma(x,y,\cdot )
		-\rho(x)\tau(y)-\rho(y)\tau(x) \\
		-&\sigma\left(y,[x,\cdot]  \right) 
		-\sigma\left(x,[y,\cdot] \right)\\ 
		-&B_{\mathfrak a}\left(\theta(x,\cdot) , \tau(y) \right)
		-B_{\mathfrak a}\left(\theta(y,\cdot) , \tau(x) \right)\\	
		-&\frac{1}{2}B_{\mathfrak a}\Big(\tau\left( [x,\cdot]\right) ,\tau(y) \Big)
		-\frac{1}{2}B_{\mathfrak a}\Big(\tau\left( [y,\cdot]\right) ,\tau(x) \Big)	
		+B_{\mathfrak a}\Big(\rho(\cdot)\tau(x) ,\tau(y) \Big).			
	\end{align*}
	In other hand, we have
	\begin{align*}
		&	[\Phi(x),\Phi(y)]_{\theta,\gamma}  \\
		&=\Big[x-\tau(x)-\sigma(x,\cdot)
		-\frac{1}{2}B_{\mathfrak a}\big(\tau(x),\tau(\cdot)\big)
		, y-\tau(y)-\sigma(y,\cdot)
		-\frac{1}{2}B_{\mathfrak a}\big(\tau(y),\tau(\cdot)\big)            \Big]_{\theta,\gamma}\\
		&=[x,y]+\theta(x,y)+\gamma(x,y,\cdot)
		-\rho(x)\tau(y) -B_{\mathfrak a}\left(\theta(\cdot,x),\tau(y) \right)\\	
		-&\sigma(y, [x,\cdot] )
		-\frac{1}{2}B_{\mathfrak a}\Big(\tau(y),\tau([x,\cdot ]   )\Big)
		-\rho(y)\tau(x) -B_{\mathfrak a}\left(\theta(\cdot,y),\tau(x) \right)\\	
		+&B_{\mathfrak a}\left(\rho(\cdot)\tau(x),\tau(y) \right)
		-\sigma(x,[y,\cdot])
		- \frac{1}{2}B_{\mathfrak a}\Big(\tau(x),\tau\left( [y, \cdot]\right)  \Big). 
	\end{align*}
	Therefore, $	\Phi\left( [x,y]_{\theta',\gamma'}\right)= [\Phi(x),\Phi(y)]_{\theta,\gamma} . $\\
	Similar, we show that $\Phi\left( [x,w]_{\theta',\gamma'}\right)
	=[\Phi(x),\Phi(w)]_{\theta,\gamma}  $, $ \Phi\left( [x,Z]_{\theta',\gamma'}\right)=
	[\Phi(x),\Phi(Z)]_{\theta,\gamma} $, $ \Phi\left( [v,w]_{\theta',\gamma'}\right)
	=[\Phi(v),\Phi(w)]_{\theta,\gamma} $.\\
	Finally, we show that $\mathfrak d_{\theta,\gamma}$	 and $\mathfrak d_{\theta',\gamma'}$ are $ i $-isomorphic.				
	\begin{align*}
		B(\Phi(x),\Phi(y))
		&= B\left(x-\tau(x)
		-\frac{1}{2}B_{\mathfrak a}\big(\tau(x),\tau(\cdot)\big),
		y-\tau(y)
		-\frac{1}{2}B_{\mathfrak a}\big(\tau(y),\tau(\cdot)\big)\right)\\
		&=B_{\mathfrak a}\left(\tau(x),
		\tau(y)\right)
		-\frac{1}{2}B_{\mathfrak a}\big(\tau(y),\tau(x)\big)	
		-\frac{1}{2}B_{\mathfrak a}\big(\tau(x),\tau(y)\big)\\
		&=0=B(x,y)
	\end{align*}	
	\begin{gather*}
		B(\Phi(x),\Phi(v))=B(x-\tau(x)
		-\frac{1}{2}B_{\mathfrak a}\big(\tau(x),\tau(\cdot)\big),v+B_{\mathfrak a}(v,\tau(\cdot)))\\
		=-B_{\mathfrak a}(\tau(x),v)+B_{\mathfrak a}(v,\tau(x))=0
	\end{gather*}	
	\begin{align*}
		B(\Phi(u),\Phi(v))&=B(u+B_{\mathfrak a}(u,\tau(\cdot))
		,v+B_{\mathfrak a}(v,\tau(\cdot)))\\
		&=B_{\mathfrak a}(u,v)
	\end{align*}	
\end{proof}
\begin{lem}
	Let $\mathfrak d_{\alpha,\gamma}:= \mathfrak d_{\theta,\gamma}(\mathfrak{a},J,\rho) $ and   $\mathfrak d_{\theta',\gamma'}:= \mathfrak d_{\theta',\gamma'}(\mathfrak{a},J,\rho) $ be two equivalent Twofold extensions. Then, the quadratic $2$-cocycle $ \left(\theta-\theta',\gamma-\gamma' \right)  $ is trivial.
\end{lem}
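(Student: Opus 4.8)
The plan is to reverse the construction carried out in Lemma~\ref{H2}: from the equivalence $\Phi$ I will extract a quadratic $1$-cochain $(\tau,\sigma)$ and show that the homomorphism identity for $\Phi$ forces precisely the relations of Theorem~\ref{17octequiv} between $(\theta,\gamma)$ and $(\theta',\gamma')$, whence the two classes coincide.

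First I would fix the isometric isomorphism $\Phi\colon\mathfrak d_{\theta',\gamma'}\to\mathfrak d_{\theta,\gamma}$ realizing the equivalence. Since $\Phi$ maps the distinguished isotropic ideal $J^*$ of the source onto that of the target, the induced map $\overline\Phi$ on $J\oplus\mathfrak a$ is defined, and the two conditions $\overline\Phi\circ i_0=i_0$ and $\pi_0\circ\overline\Phi=\pi_0$ force the $J$-component of $\Phi(x+v+Z)$ to be $x$ and its $\mathfrak a$-component to be $v-\tau(x)$ for a single linear map $\tau\colon J\to\mathfrak a$. Testing the isometry condition $B(\Phi(a),\Phi(b))=B(a,b)$ against the pairing \eqref{BJ*} then pins down the $J^*$-component: pairing $J^*$ with $J$ gives $\Phi|_{J^*}=\mathrm{id}$, pairing $\mathfrak a$ with $J$ produces the summand $B_{\mathfrak a}(v,\tau(\cdot))$, and the $J\times J$ part forces the symmetric part of the remaining term to equal $-\tfrac12 B_{\mathfrak a}(\tau(\cdot),\tau(\cdot))$, leaving an antisymmetric form $-\sigma$ as the only residual freedom. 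Thus $\Phi$ necessarily has the shape of the map written in the proof of Lemma~\ref{H2}. Finally, the Hom-equivariance $\Phi\circ(\alpha+\beta+\alpha^*)=(\alpha+\beta+\alpha^*)\circ\Phi$ yields $\tau\circ\alpha=\beta\circ\tau$ together with the $\alpha$-invariance $\sigma(\alpha(\cdot),\cdot)=\sigma(\cdot,\alpha(\cdot))$, so that $(\tau,\sigma)$ is a genuine quadratic $1$-cochain.

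Next I would impose that $\Phi$ is a bracket homomorphism and read off the consequences of $\Phi([x,y]_{\theta',\gamma'})=[\Phi(x),\Phi(y)]_{\theta,\gamma}$ componentwise, using the bracket \eqref{1d} of the twofold extension. The $J$-component is automatic, and the $\mathfrak a$-component collapses, via the definition \eqref{adjoint1} of $d^1$, to $\theta'(x,y)=\theta(x,y)+d^1\tau(x,y)$. The $J^*$-component is the substantive step: after substituting $\theta'=\theta+d^1\tau$, one rewrites the terms $B_{\mathfrak a}(\theta(\cdot,x),\tau(y))$, $B_{\mathfrak a}(\rho(\cdot)\tau(x),\tau(y))$ and the contributions $Z\mapsto\sigma([x,y],\cdot)$ by means of the wedge product \eqref{Wedge}, the symmetry axiom \eqref{EqQrepr1} of the quadratic representation and the operator \eqref{operatorCoadjoint4}, exactly as encoded in the identities of Proposition~\ref{17OCTpropWedge}. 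This produces $\gamma'=\gamma+d^2\sigma-B\big(\tau\wedge(\theta+\tfrac12 d^1\tau)\big)$. The mixed brackets $[x,v]$, $[v,w]$ and $[Z,x]$ give nothing new, since they involve only the common data $\rho$ and $B_{\mathfrak a}$ and are satisfied automatically once $\tau$ is $\alpha$-equivariant and $\Phi|_{J^*}=\mathrm{id}$.

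With both relations in hand, Theorem~\ref{17octequiv} (whose hypothesis $d^2_c\theta=0$ is available here, $\theta$ arising from the metric twofold extension) gives $d^2_Q(\theta,\gamma)=d^2_Q(\theta',\gamma')$; this is exactly the statement that the difference $(\theta-\theta',\gamma-\gamma')$ is trivial, i.e.\ represents the zero class in $H^2_Q(J,\mathfrak a)$. The \emph{main obstacle} is the $J^*$-component identity of the second step: it is the verbatim reverse of the long calculation in the proof of Lemma~\ref{H2}, and its only non-formal inputs are Proposition~\ref{17OCTpropWedge} and the quadratic-representation symmetry \eqref{EqQrepr1}, so what is required is careful bookkeeping of the six cyclic terms rather than any new idea.
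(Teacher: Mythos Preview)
Your proposal is correct and follows the same route as the paper: decompose $\Phi$ componentwise using the equivalence conditions and the isometry, extract $\theta'=\theta+d^1\tau$ and the $\gamma'$-relation from the bracket-homomorphism identity, then invoke Theorem~\ref{17octequiv}. You are in fact slightly more careful than the paper, which asserts $\zeta(x)(y)=\zeta(y)(x)$ without justification (thereby suppressing your antisymmetric residual $\sigma$) and omits the Hom-equivariance check for $\tau$; neither omission affects the conclusion, and Theorem~\ref{17octequiv} absorbs the extra $d^2_r\sigma$ term in any case.
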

\begin{proof}

	Let 	$\Phi(x)=f(x)+\tau(x)+\zeta(x) $ where $f\colon J\to J$, $\tau \colon J\to \mathfrak a$ and $\zeta \colon J\to J^*.  $ 	
	Using $ \pi\ \circ\ \Phi'= \pi $, we obtain $ f(x)=x $. Then
	\[ \Phi(x)=x+\tau(x)+\zeta(x) . \]
	Let $\Phi(v)=g(v)+h(v)+\eta(v)$, 	where $g\colon\mathfrak a \to J$, $h \colon \mathfrak a\to \mathfrak a$ and $\eta \colon\mathfrak a \to J^*.$ Using $ \Phi' \circ \ i= i $, we obtain $ g(v)=0 $  and $ h(v)=v $, Then
	$\Phi(v)=v+\eta(v)$. 
	Using $ B(v,x)=B(\Phi(v),\Phi(x))  $, we obtain $ \eta(v)(x) =-B_{\mathfrak a}( v  ,\tau(x))  $.
	With $\Phi$ is an isometry and $\Phi(J^*) \subset J^* $ we obtain
	$ \Phi(Z)=Z. $\\
	Using $B(\Phi(x),\Phi(y)) = B(x,y) $ we obtain 
	$	B_{\mathfrak a}(\tau(x),\tau(y))=- \zeta(x)(y)-\zeta(y)(x)$. Since $\zeta(x)(y)=\zeta(y)(x)$, we obtain $ \zeta(x,y)=-\frac{1}{2}	B_{\mathfrak a}(\tau(x),\tau(y)). $\\
	By $ \Phi(d(x,y))=d'(\Phi(x),\Phi(y) ) $, we obtain 
	\begin{gather*}
		\theta(x,y)=\theta'(x,y)-\tau(\delta(x,y))+\rho(x)\tau(y) +\rho(y)\tau(x)=\theta'(x,y)-d^1\tau(x,y)
	\end{gather*}
	and
	\begin{gather*}
		\gamma(x,y,\cdot)
		=\gamma'(x,y,\cdot)-B_{\mathfrak a}\left(   ( \theta'+\frac{1}{2}d(-\tau))  \wedge (-\tau)\right)(x,y,\cdot).
	\end{gather*}
	
	Hence,
	\begin{gather*}
		\left\{ \begin{array}{ll}
			\alpha=\alpha'+d^1\left( -\tau\right)  \\
			\gamma=\gamma'(x,y,\cdot)-B_{\mathfrak a}\left(   ( \alpha'+\frac{1}{2}d(-\tau))  \wedge (-\tau)\right)(x,y,\cdot)
		\end{array} \right.
	\end{gather*} 
	Using Proposition \ref{structureMAY24}, we have $d_{c}^2\theta=0$.\\
	Therefore, using 
	Proposition \ref{17octequiv}, we have
	$
	d^2_{Q}	(\theta,\gamma)=d^2_{Q}	(\theta',\gamma') $.	
\end{proof}
\begin{thm}\label{classiJune4}
	The set $ Ext( J,\mathfrak a) $  of equivalence classes of  Twofold extensions $ \left( J,\delta,\alpha \right)  $  by an abelian   $\mathfrak a$
	is one-to-one   correspondence with $ Z_{Q}^2( J,\mathfrak a)/ B^2(J,\mathfrak a)$, that is 
	\[Ext( J,\mathfrak a) \cong  H_{Q}^2( J,\mathfrak a). \]	                     
\end{thm}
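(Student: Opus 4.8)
The plan is to build an explicit map $H^2_{Q}(J,\mathfrak a)\to Ext(J,\mathfrak a)$ and to check, using the machinery already assembled in this section, that it is a well-defined bijection. The candidate map sends the class $[(\theta,\gamma)]$ of a quadratic $2$-cocycle to the equivalence class of the twofold extension $\mathfrak d_{\theta,\gamma}(\mathfrak a,J,\rho)$. Proposition \ref{theoremTsquadratic} guarantees that $\mathfrak d_{\theta,\gamma}$ is genuinely a quadratic (hence twofold) extension, so the assignment lands in $Ext(J,\mathfrak a)$. Surjectivity is then immediate from the definition of twofold extension, since every twofold extension is of the form $\mathfrak d_{\theta,\gamma}$ for some cocycle; if instead one wants to reach the classes of \emph{all} quadratic extensions, surjectivity follows from Proposition \ref{bijectionApril}, which shows every quadratic extension is equivalent to a twofold one.

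First I would establish that the map is well defined on cohomology classes. Suppose $(\theta,\gamma)$ and $(\theta',\gamma')$ represent the same class in $H^2_{Q}$, i.e. they are related by the (twisted) coboundary relation of Theorem \ref{17octequiv}, so that $d^2_{Q}(\theta,\gamma)=d^2_{Q}(\theta',\gamma')$. Lemma \ref{H2} then produces an explicit isomorphism $\Phi$ of metric Hom-Jacobi-Jordan algebras, namely $\Phi(x+v+Z)=x+v-\tau(x)+Z-\tfrac12 B_{\mathfrak a}(\tau(x),\tau(\cdot))+B_{\mathfrak a}(v,\tau(\cdot))$, realising the equivalence of the two twofold extensions. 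Thus the class of the extension depends only on the class of the cocycle, and the map descends to $H^2_{Q}(J,\mathfrak a)$.

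Next I would prove injectivity. Suppose $\mathfrak d_{\theta,\gamma}$ and $\mathfrak d_{\theta',\gamma'}$ are equivalent twofold extensions. The lemma immediately preceding this theorem, by unwinding $\overline\Phi\circ i_0=i_0$, $\pi_0\circ\overline\Phi=\pi_0$, and the isometry property of $\Phi$, forces $\theta'=\theta+d^1(-\tau)$ together with the corresponding twisted relation between $\gamma$ and $\gamma'$, i.e. $(\theta-\theta',\gamma-\gamma')$ is a quadratic $2$-coboundary. Hence equivalent twofold extensions have cohomologous cocycles, so the map is injective. Combining well-definedness, surjectivity, and injectivity yields the claimed bijection $Ext(J,\mathfrak a)\cong H^2_{Q}(J,\mathfrak a)$.

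The main obstacle I anticipate is controlling the non-linearity of $d^2_{Q}$: because the quadratic cochain operator carries the term $\tfrac12 B_{\mathfrak a}(\theta\wedge(\theta\circ\alpha))$, the equivalence of two cocycles is governed by the $\theta$-twisted relation $\gamma'=\gamma+d^2\sigma-B(\tau\wedge(\theta+\tfrac12 d^1\tau))$ of Theorem \ref{17octequiv} rather than by a plain translation inside $B^2_{Q}(J,\mathfrak a)$. The delicate point is therefore to confirm that this twisted relation is exactly what the quotient $Z^2_{Q}/B^2_{Q}$ records, and that the same relation governs both Lemma \ref{H2} (for well-definedness) and the final lemma (for injectivity). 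Once this compatibility is verified, the theorem reduces to a formal assembly of those two lemmas with Propositions \ref{theoremTsquadratic} and \ref{bijectionApril}.
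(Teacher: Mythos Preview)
Your proposal is correct and follows exactly the route the paper intends: the theorem is stated without proof immediately after Lemma \ref{H2} (well-definedness) and the unnamed lemma on equivalent twofold extensions (injectivity), together with Proposition \ref{theoremTsquadratic} (the construction) and Proposition \ref{bijectionApril} (surjectivity for general quadratic extensions). Your concern about the non-linearity of $d^2_{Q}$ and the twisted coboundary relation is well-taken---the paper sidesteps this by phrasing both lemmas in terms of $d^2_{Q}(\theta,\gamma)=d^2_{Q}(\theta',\gamma')$ rather than a literal difference lying in $B^2_{Q}$---but it does not affect the assembly of the bijection, which is precisely what you describe.
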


					
\end{document}